\newcommand{\mcal}[1]{\mathcal{#1}}
\newcommand{\T}{\mathsf{T}}
\newcommand{\norm}[1]{\left\Vert #1 \right\Vert}
\newcommand{\mbb}[1]{\mathbb{#1}}
\newcommand{\diag}{\mathrm{diag}}
\newcommand{\minimize}[1]{ \ensuremath{\underset{#1}{\mathrm{minimize}}\ }}
\newcommand{\algsep}{\\[0.4em]}
\newcommand*{\mean}[1]{\ensuremath{\overline{#1}}}
\newcommand*{\trace}[1]{\ensuremath{\mathrm{trace}\left({#1}\right)}}
\newcommand*{\subj}{\ensuremath{\mathrm{subject\ to\ }}}
\newcommand*{\bp}[1]{\ensuremath{\Bigl(#1\Bigr)}}
\newtheorem{assumption}{Assumption}
\newtheorem{theorem}{Theorem}
\newtheorem{corollary}{Corollary}
\newtheorem{lemma}{Lemma}
\newtheorem{remark}{Remark}
\NewDocumentCommand{\card}{sO{}m}{%
  {\IfBooleanTF{#1}
    {\oldnormaux{\left|\right.}{\left.\right|}{#3}}
    {\oldnormaux{#2|}{#2|}{#3}}}
}
\newcommand{\oldnormaux}[3]{\mathpalette\oldnormaux@i{{#1}{#2}{#3}}}
\newcommand{\oldnormaux@i}[2]{\oldnormaux@ii#1#2}
\newcommand{\oldnormaux@ii}[4]{%
  \sbox\z@{$\m@th#1#2#4#3$}%
  \sbox\tw@{$\m@th\|$}%
  \mathopen{\hbox to\wd\tw@{\hss\vrule height \ht\z@ depth \dp\z@ width .2\wd\tw@\hss}}%
  #4
  \mathclose{\hbox to\wd\tw@{\hss\vrule height \ht\z@ depth \dp\z@ width .2\wd\tw@\hss}}%
}
\newcommand{\longdash}[1][2em]{%
  \makebox[#1]{$\m@th\smash-\mkern-7mu\cleaders\hbox{$\mkern-2mu\smash-\mkern-2mu$}\hfill\mkern-7mu\smash-$}}
\newcommand{\omitskip}{\kern-\arraycolsep}
\newcommand{\llongdash}[1][2em]{\longdash[#1]\omitskip}
\newcommand{\rlongdash}[1][2em]{\omitskip\longdash[#1]}
\newcommand{\cmark}{\ding{51}}
\newcommand{\xmark}{\ding{55}}
\title{\LARGE \bf Distributed Quasi-Newton Method for Multi-Agent Optimization}
\author{Ola Shorinwa$^{1}$ and Mac Schwager$^{2}$%
	\thanks{*This work was supported in part by ONR grants N00014-23-1-2354 and N00014-18-1-2830, and DARPA grant  HR001120C0107. Toyota Research Institute provided funds to support this work.}%
	\thanks{$^{1}$Ola Shorinwa is with the Department of Mechanical Engineering, Stanford University, CA, USA {\tt\small shorinwa@stanford.edu}.}%
	\thanks{$^{2}$Mac Schwager is with the Department of Aeronautics and Astronautics Engineering, Stanford University, CA, USA {\tt\small schwager@stanford.edu}.}%
}
\begin{document}
	\maketitle

	\begin{abstract}
		We present a \emph{distributed quasi-Newton} (DQN) method, which enables a group of agents to compute an optimal solution of a \emph{separable multi-agent} optimization problem locally using an approximation of the curvature of the aggregate objective function. Each agent computes a descent direction from its local estimate of the aggregate Hessian, obtained from quasi-Newton approximation schemes using the gradient of its local objective function. Moreover, we introduce a distributed quasi-Newton method for \emph{equality-constrained} optimization (\mbox{EC-DQN}), where each agent takes \emph{Karush-Kuhn-Tucker}-like update steps to compute an optimal solution. In our algorithms, each agent communicates with its one-hop neighbors over a \emph{peer-to-peer} communication network to compute a common solution. We prove convergence of our algorithms to a stationary point of the optimization problem. In addition, we demonstrate the competitive empirical convergence of our algorithm in both \emph{well-conditioned} and \emph{ill-conditioned} optimization problems, compared to existing distributed \emph{first-order} and \emph{second-order} methods. Particularly, in \emph{ill-conditioned} problems, our algorithms achieve a faster computation time for convergence, while requiring a lower communication cost, across a range of communication networks with different degrees of connectedness.
	\end{abstract}

	\begin{IEEEkeywords}
            Distributed Quasi-Newton, Multi-agent Optimization, Distributed Optimization, Equality-Constrained Optimization.
	\end{IEEEkeywords}

	\section{Introduction}
\label{sec:introduction}
The paradigm of optimization has been widely employed in model design and analysis \cite{bashir2021aerodynamic, masdari2019optimization, harish2016reduced, ahmadianfar2021gradient, long2021parameters}, planning \cite{toumieh2022decentralized, torreno2014fmap, mishra2019multi}, and control \cite{franze2018distributed, wang2014synthesis, luo2017multi}, across broad problem domains. In many situations, the pertinent problem requires optimization over spatially-distributed data and agent-specific objective and constraints across a group of agents---referred to as the \emph{multi-agent} problem. Privacy constraints and resource limitations often make aggregation of the problem data for centralized optimization intractable, thus rendering distributed optimization as a viable alternative for optimization. Via distributed optimization, each agent communicates with its local neighbors over a communication network to compute a common, optimal solution of the problem, without any access to the aggregate problem data over all agents. In this work, we not only consider \emph{unconstrained} multi-agent problems, we also consider \emph{equality-constrained} problems, noting that many existing distributed optimization algorithms (e.g., the broad class of distributed first-order methods) typically consider only unconstrained multi-agent optimization problems. Meanwhile, constrained multi-agent optimization problems arise in many settings, e.g., basis pursuit problems in signal processing.

In this work, we focus on quasi-Newton methods, which are particularly useful in \emph{ill-conditioned} optimization problems, where first-order methods perform poorly. Moreover, quasi-Newton methods generally provide faster convergence rates compared to first-order methods by leveraging information on the curvature of the objective function, without the additional overhead of Newton's method.
In contrast to the existing distributed quasi-Newton methods D-BFGS \cite{eisen2017decentralized} and PD-QN \cite{eisen2019primal}, which utilize an approximation of the problem Hessian over a ``local" neighborhood, we introduce distributed quasi-Newton methods---for unconstrained (\emph{DQN}) and equality-constrained optimization (\emph{EC-DQN})---where each agent utilizes an estimate of the \emph{aggregate} Hessian, which considers the objective functions of \emph{all} agents. Consequently, our algorithms can be considered to be analogous to distributed gradient-tracking methods in the quasi-Newton optimization domain. We discuss our method in the context of existing distributed quasi-Newton methods in greater detail in Section~\ref{sec:related_work}. By directly estimating the inverse Hessian of the aggregate optimization problem, DQN circumvents the computational cost associated with matrix inversion procedures arising in Newton (or quasi-Newton) methods. We prove convergence of our algorithms to a stationary solution of the aggregate optimization problem, provided the objective function is convex and smooth.

We examine the performance of DQN and EC-DQN in unconstrained and equality-constrained optimization problems, in terms of the computation time required for convergence and the associated communication cost, compared to existing methods. We demonstrate that our algorithms perform competitively in well-conditioned problems, and in particular, in ill-conditioned problems, converge faster than other algorithms across different communication networks while providing about a seven-to-ten-times reduction in the communication cost in the case of DQN.
We summarize our contributions as follows:
\begin{enumerate}
    \item We derive a \emph{distributed quasi-Newton} (DQN) method for unconstrained multi-agent optimization, enabling each agent to utilize an estimate of the aggregate problem Hessian for faster convergence, especially in \emph{ill-conditioned} problem settings.
    
    \item Further, we present a distributed quasi-Newton method for \emph{equality-constrained} optimization problems \emph{EC-DQN}, which extends DQN to the constrained setting, without any associated projection procedure for constraint satisfaction.

    \item We demonstrate the superior performance of our algorithms DQN and EC-DQN in a variety of problems across a range of communication networks, particularly in ill-conditioned problems. 
\end{enumerate}

The paper is organized as follows: In Section~\ref{sec:related_work}, we present the related work and, in Section~\ref{sec:preliminaries}, present notation and introduce relevant concepts fundamental to understanding the discussion in this paper. In Section~\ref{sec:problem}, we formulate the distributed optimization problem. We derive our distributed optimization algorithms in Sections~\ref{sec:distributed_alg_uncons}~and~\ref{sec:distributed_alg_cons}. We examine the performance of our algorithms in simulation in Sections~\ref{sec:simulations_uncons}~and~\ref{sec:simulations_cons}. In Section~\ref{sec:conclusion}, we provide concluding remarks.

	 \section{Related Work}
 \label{sec:related_work}

 In this section, we review notable algorithms for finite-sum distributed optimization problems and refer readers to
 \cite{yang2019survey, shorinwa2023distributedsurvey} for a more detailed overview of these methods.
 The simplest distributed optimization algorithms extend the well-known \emph{centralized gradient descent} algorithm to the distributed setting \cite{nedic2009, lobel2010distributed}, using each agent's local gradient. However, these methods do not converge to an optimal solution of the aggregate optimization problem under a fixed step-size scheme. Distributed gradient-tracking methods \cite{shi2015extra, nedic2017achieving, liao2022compressed, li2021accelerated, lu2020nesterov, sun2022distributed} attempt to address this fundamental limitation by utilizing an estimate of the gradient of the aggregate objective function. In the best-case scenario, i.e., in problems with strongly-convex objective functions with Lipschitz-continuous gradients, distributed first-order methods achieve linear convergence to an optimal solution of the problem. However, in general, these algorithms only apply to \emph{separable} \emph{unconstrained} optimization problems.

 Distributed \emph{primal-dual} methods seek to find a saddle-point of the Lagrangian, provided one exists, by alternating between a minimization procedure to compute a primal iterate and a maximization procedure to compute a dual iterate, where the minimization problems are solved via gradient-descent \cite{mateos2016distributed, cortes2019distributed, aybat2016primal}, or explicitly via the \emph{alternating direction method of multipliers} (ADMM) \cite{mateos2010distributed, mota2013d, chang2014multi, shorinwa2020scalable, carli2019distributed} or other augmented-Lagrangian-based distributed optimization algorithms \cite{zhang2018consensus, jakovetic2014linear, kia2017augmented}. Primal-dual methods solve constrained optimization problems directly, in general; however, like distributed first-order methods, these methods converge linearly to an optimal solution of the problem, in the best-case setting.

 Distributed \emph{second-order} methods extend Newton's method to distributed optimization, utilizing higher-order information, such as the Hessian of the objective function \cite{mokhtari2016network, liu2023communication, mansoori2019fast}. As a result, these methods generally provide faster convergence rates (superlinear to quadratic convergence rates) compared to other distributed optimization methods. However, the improved rate of convergence often comes at the cost of a greater computational overhead, and in some situations, computation of the problem Hessian is outrightly infeasible. Distributed \emph{quasi-Newton} methods attempt to resolve this challenge by rather utilizing an estimate of the problem Hessian, rendering computation of the exact Hessian unnecessary \cite{eisen2017decentralized, soori2020dave, mokhtari2016decentralized, eisen2019primal, li2022communication}. 

 The distributed quasi-Newton methods \cite{eisen2017decentralized, eisen2019primal} are of particular relevance to this work. In the D-BFGS method \cite{eisen2017decentralized}, each agent estimates the problem Hessian over a ``local" neighborhood, which does not directly account for the objectives of all agents. In contrast, in our distributed algorithm, each agent computes an approximation of the ``aggregate" problem Hessian, i.e., across all agents (regardless of their connectedness). Prior work on first-order methods has shown that utilizing an estimate of the aggregate gradient instead of the local gradient leads to faster convergence, which motivated the development of our method. We explore this potential competitive advantage in our work. In addition, the ``local" Hessian ${B_{i} \in \mbb{R}^{m_{i}n}}$ (where $m_{i}$ represents the number of neighbors of agent $i$ and $n$ represents the dimension of the optimization variable) scales poorly with the degree of connectedness of the graph. In the worst-case, each agent computes the inverse of a symmetric, square matrix of dimension ${Nn}$-by-${Nn}$ (where $N$ is the number of agents), which has a complexity of ${O(N^{3}n^{3})}$. In contrast, in our method, even in the worst-case implementation, the estimated Hessian is always of dimension $n$-by-$n$, giving a complexity of ${O(n^{3})}$, a factor of $N^{3}$ speedup. Moreover, in the unconstrained case, our method does not require direct computation of the inverse of the Hessian; rather, each agent directly estimates the inverse Hessian, eliminating the ${O(n^{3})}$ computational complexity. Further, the D-BFGS method requires agent $i$ to communicate an aggregate variable of dimension ${(m_{i} + 2)n}$ per iteration, which, again, scales poorly in densely-connected networks compared to our method, which requires the communication of a variable of dimension ${3n}$ per iteration. 
 
 The PD-QN method in \cite{eisen2019primal} extends \cite{eisen2017decentralized} to the primal-dual setting, using a formulation based on the augmented Lagrangian. In the primal procedure in \cite{eisen2019primal}, each agent utilizes a Taylor Series expansion to approximate the inverse Hessian of the augmented Lagrangian, which is constructed from the local Hessian of each agent. This procedure requires a nested-loop for the $K$ terms in the Taylor Series expansion, contributing additional computation and communication overhead. In contrast, our method takes a different approach by directly estimating the ``aggregate" problem Hessian and does not require any nested-loops. Moreover, the dual procedure in PD-QN\cite{eisen2019primal} utilizes the D-BFGS method \cite{eisen2017decentralized} and thus suffers from the same aforementioned challenges. Lastly, all these methods apply only to unconstrained distributed optimization. In this work, we derive a distributed algorithm for the equality-constrained optimization setting.

		\section{Notation and Preliminaries}
\label{sec:preliminaries}
We present relevant notation used in this paper. Given an objective function ${f: \mbb{R}^{n} \rightarrow \mbb{R}}$, we denote its gradient by ${\nabla f: \mbb{R}^{n} \rightarrow \mbb{R}^{n}}$ and its Hessian by ${\nabla^{2} f: \mbb{R}^{n} \rightarrow \mbb{R}^{n \times n}}$. Further, we denote the domain of a function $f$ by ${\mathrm{dom}(f)}$. We denote the spectral norm of a matrix ${A \in \mbb{R}^{m \times n}}$ by $\rho(A)$, representing its largest singular value. We denote the $p$-norm of a matrix by $\norm{\cdot}_{p}$, where ${p = F}$ denotes the Frobenius norm. We denote the $n$-dimensional all-ones vector by $\bm{1}_{n}$ and the Identity matrix by ${I_{n} \in \mbb{R}^{n \times n}}$. We denote the non-negative orthant by $\mbb{R}_{+}$, the strictly-positive orthant by $\mbb{R}_{++}$, and the set of symmetric positive-definite matrices by $\mbb{S}_{++}^{n}$. We define the (row-wise) mean of a matrix ${A \in \mbb{R}^{m \times n}}$ as ${\mean{A} = \frac{1}{m}\bm{1}_{m}\bm{1}_{m}^{\T} A}$, where ${\mean{A} \in \mbb{R}^{m \times n}}$.

In this paper, we represent the agents as nodes in a static, undirected communication graph ${\mcal{G} = (\mcal{V}, \mcal{E})}$, defined by a set of vertices ${\mcal{V} = \{1, \ldots, N\}}$, denoting the agents, and a set of edges ${\mcal{E} \subset \mcal{V} \times \mcal{V}}$, signifying the existence of a communication link between a pair of agents. For example, edge ${e = (i, j)}$ indicates that agent $i$ can communicate with agent $j$. Further, we denote the set of neighbors of agent $i$, including agent $i$, as ${\mcal{N}_{i} = \{i \} \cup \{(i, j),\ \forall j \in \mcal{V} \mid (i, j) \in \mcal{E} \}}$. In addition, we associate a \emph{mixing} matrix ${W \in \mbb{R}^{N \times N}}$ with the communication graph $\mcal{G}$. A mixing matrix is \emph{compatible} with $\mcal{G}$, if ${w_{ij} = 0,\ \forall j \notin \mcal{N}_{i},\ \forall i \in \mcal{V}}$.

	\section{Problem Formulation and Centralized Quasi-Newton Methods}
\label{sec:problem}
We consider a problem setting with $N$ agents, where each agent has access to its local problem data (which may have been collected from its onboard sensors). In the unconstrained multi-agent optimization problem setting, the group of agents seeks to \emph{collaboratively} compute a solution to the \emph{aggregate} optimization problem, defined over the entire data across all agents, given by:
\begin{equation}
	\label{eq:global_prob_uncons}
	\minimize{x \in \mathbb{R}^{n}} \frac{1}{N} \sum_{i = 1}^{N} f_{i}(x),
\end{equation}
where the objective function comprises of a sum of the individual objective functions of all agents, defined over the optimization variable denoted by ${x \in \mbb{R}}$. In \eqref{eq:global_prob_uncons}, ${f_{i} : \mbb{R}^{n} \mapsto \mbb{R}}$ denotes the local objective function of agent $i$, which depends only on its local data, such as its local observations, measurements, and preferences, and does not involve the local problem data of other agents.

In the constrained optimization setting, each agent seeks to compute an optimal solution of the \emph{convex} equality-constrained optimization problem, given by:
\begin{equation}
	\label{eq:global_prob_cons}
	\begin{aligned}
		\minimize{x \in \mbb{R}^{n}} &\sum_{i = 1}^{N} f_{i}(x) \\
		\subj &Ax = b,
	\end{aligned}
\end{equation}
where ${f_{i}: \mbb{R}^{n} \mapsto \mbb{R}}$, denotes the local convex objective function of agent $i$. The solution of \eqref{eq:global_prob_cons} must satisfy the equality constraint in \eqref{eq:global_prob_cons}, with ${A \in \mbb{R}^{m \times n}}$ and ${b \in \mbb{R}^{m}}$, which could encode domain-specific prior knowledge or requirements. We assume that each agent has access to $A$ and $b$. Such situations arise in practice, e.g., when the constraints defined by $A$ and $b$ results from functional or safety requirements that must be satisfied by the solution. Such constraints arise in a variety of disciplines, e.g., in controls and signal processing domains, where the solution (signal) should be compatible with the observed measurements, given the measurement model, in the noiseless case.

Overloading notation, we denote the composite objective function in \eqref{eq:global_prob_uncons} and \eqref{eq:global_prob_cons} as ${f: \mbb{R}^{n} \mapsto \mbb{R}}$. In this work, we assume that the local problem data of each agent is inaccessible to other agents, due to privacy or communication limitations. We note that each agent cannot solve \eqref{eq:global_prob_uncons} or \eqref{eq:global_prob_cons} independently, i.e., in isolation, since each agent lacks global access to the entire problem data. Consequently, all the agents must work together to compute an optimal solution, by leveraging distributed optimization techniques.

We state some assumptions on the objective and constraint functions in \eqref{eq:global_prob_uncons} and \eqref{eq:global_prob_cons}, in addition to assumptions on the communication graph representing the communication network among the agents.

\begin{assumption}
    \label{assm:smooth_objective}
    The local objective function of each agent is convex, closed, and proper. Further, $f_{i}$ is coercive and $L_{i}$-smooth, where ${L_{i} \in \mbb{R}_{++}}$,~${\forall i \in \mcal{V}}$.
\end{assumption}

This assumption implies that $\nabla f_{i}$ is Lipschitz-continuous and further implies that $f$ is $L$-smooth, where ${L \geq \max_{i \in \mcal{V}} L_{i}}$.

\begin{assumption}
    \label{assm:solvable_problem}
    The multi-agent optimization problems \eqref{eq:global_prob_uncons} and \eqref{eq:global_prob_cons} have a non-empty feasible set, with an optimal solution $x^{\star}$.
\end{assumption}

This assumption eliminates scenarios with unbounded and infeasible optimization problems.

\begin{assumption}
    \label{assm:mixing_matrix}
    The mixing matrix $W$ is doubly-stochastic with ${W \bm{1}_{N} = \bm{1}_{N}}$ and ${\bm{1}_{N}^{\T} W = \bm{1}_{N}}$. In addition, the matrix ${M = W - \frac{\bm{1}_{N} \bm{1}_{N}^{\T}}{N}}$ has its maximum singular value less than one.
\end{assumption}

Various schemes exist for generating mixing matrices that satisfy this assumption, such as schemes that utilize Metropolis-Hasting weights, given by:
\begin{equation*}
    w_{ij} = \begin{cases*}
                    \frac{1}{\max\{\deg(i), \deg(j)\} + \epsilon}, & if $(i,j) \in \mcal{E},$ \\
                    0 & if $(i,j) \notin \mcal{E}$ and $i \neq j,$ \\
                    1 - \sum_{r \in \mcal{V}} w_{ir} & if $i = j,$
               \end{cases*}
\end{equation*}
where ${\epsilon > 0}$ represents a sufficiently small positive constant.
From Assumption~\ref{assm:mixing_matrix}, given that the spectral norm of the matrix ${M = W - \frac{\bm{1}_{N} \bm{1}_{N}^{\T}}{N}}$ is less than one, we note that:
\begin{equation}
    \lim_{k \rightarrow \infty} W^{k} = \frac{\bm{1}_{N} \bm{1}_{N}^{\T}}{N},
\end{equation}
i.e., consensus is attained when the mixing matrix $W$ is utilized in a linear-consensus scheme.

Before discussing centralized quasi-Newton methods, we provide a brief overview of Newton's method for optimization problems, without a line-search procedure. Newton's method seeks to compute the extremum of the optimization problem in \eqref{eq:global_prob_uncons} by generating a sequence of iterates through the recurrence:
\begin{equation}
	\label{eq:newton_recurrence}
	x^{(k + 1)} = x^{(k)} - \left( \nabla^{2} f(x^{(k)})\right)^{-1}  \nabla f(x^{(k)}),
\end{equation}
which requires the evaluation of the gradient and Hessian of $f$, in addition to computing the inverse of the Hessian. Generally, Newton's method yields high-accuracy solutions within only a few iterations, when it converges, making it a highly effective optimization method. However, in large-scale problems (e.g., with more than $10^4$ decision variables) the computation of the inverse Hessian is particularly challenging, limiting the application of Newton's method to relatively small problems.

Quasi-Newton methods were developed to address these limitations. These methods approximate the Hessian (or its inverse), eliminating the need for its explicit, exact computation. Examples of quasi-Newton methods include the Broyden-Fletcher-Goldfarb-Shanno (BFGS) method \cite{dennis1977quasi}, Davidon-Fletcher-Powell (DFP) method \cite{davidon1959variable}, and the Symmetric Rank-One (SR1) method \cite{byrd1996analysis}. Here, we describe the BFGS and note similarities between the BFGS method and other quasi-Newton methods. Quasi-Newton methods compute an approximate Hessian $B^{(k + 1)}$ that satisfies the \emph{secant equation}, given by:
\begin{equation}
	B^{(k + 1)}s^{(k)} = y^{k},
\end{equation}
at iteration $k$, where ${s^{(k)} = x^{(k + 1)} - x^{(k)}}$ and ${y^{(k)} = \nabla f (x^{(k + 1)}) - \nabla f (x^{(k)})}$. In addition, noting the symmetry of the Hessian, these methods seek a symmetric approximation of the Hessian, that is also positive-definite, to guarantee that a solution exists for the recurrence in \eqref{eq:newton_recurrence}, when the approximate Hessian is used in place of the exact Hessian. We note that the approximate Hessian computed by the SR1 method is not guaranteed to be positive-definite. The BFGS method computes the approximate inverse Hessian $C^{(k + 1)}$ as the solution of the optimization problem:
\begin{equation}
	\label{eq:BFGS_problem}
	\begin{aligned}
		\minimize{C} &\norm{\Lambda^{-1}(C - C^{(k)})\Lambda^{-\T}}_{F} \\
		\subj &C = C^{\T} \\
		&Cy^{(k)} = s^{(k)},
	\end{aligned}
\end{equation}
where ${C \in \mbb{R}^{n \times n}}$, and ${\Lambda \in \mbb{R}^{n \times n}}$ denotes a nonsingular matrix, satisfying the equality ${\Lambda\Lambda^{\T}y = s}$. This equality ensures that ${y^{\T}s > 0}$, referred to as the \emph{curvature condition}. Satisfaction of the curvature condition guarantees positive-definiteness of the resulting approximate (inverse) Hessian. The BFGS update scheme for the approximate inverse Hessian simplifies to:
\begin{equation}
	\label{eq:BFGS_update}
	\begin{aligned}
		C^{(k + 1)} &= \left(I_{n} - \frac{s^{(k)}y^{(k) \T}}{y^{(k) \T} s^{(k)}}  \right) C^{(k)} \left(I_{n} - \frac{y^{(k)}s^{(k) \T}}{y^{(k) \T} s^{(k)}}  \right) \\
		& \quad + \frac{s^{(k)}s^{(k) \T}}{y^{(k) \T} s^{(k)}}.
	\end{aligned}
\end{equation}
Similarly, the DFP update scheme can be derived by solving a related optimization problem, with the resulting update scheme given by:
\begin{equation}
	\label{eq:DFP_update}
	\begin{aligned}
		C^{(k + 1)} &= C^{(k)} - \frac{C^{(k)}y^{(k)}y^{(k)\T}C^{(k)}}{y^{(k)\T}C^{(k)}y^{(k)}} + \frac{s^{(k)}s^{(k)\T}}{y^{(k)\T}s^{(k)}}.
	\end{aligned}
\end{equation}
Quasi-Newton methods compute the minimizer of the optimization problem in \eqref{eq:global_prob_uncons} via the following recurrence:
\begin{equation}
	\label{eq:quasi_newton_recurrence}
	x^{(k + 1)} = x^{(k)} - \alpha^{(k)} C^{(k)}  \nabla f(x^{(k)}),
\end{equation}
where ${\alpha^{k} \in \mbb{R}_{++}}$ represents the step-size at iteration $k$.

	\section{Distributed Unconstrained Optimization}
\label{sec:distributed_alg_uncons}
To solve the optimization problem in \eqref{eq:global_prob_uncons}, quasi-Newton methods require the computation of the approximate Hessian $C^{(k)}$ to execute the recurrence in \eqref{eq:quasi_newton_recurrence}, which involves the local data of all agents through the gradient of $f$. However, no agent has access to all the problem data. Aggregation of the problem data at a central node and subsequent optimization pose notable challenges, as discussed in Section~\ref{sec:introduction}. Consequently, in this section, we derive a distributed quasi-Newton method for unconstrained optimization in multi-agent networks, which eliminates the need for a central station, while enabling each agent to compute an optimal solution of \eqref{eq:global_prob_uncons}, via local computation and communication with its immediate neighbors.

We begin by assigning local variables ${x_{i} \in \mbb{R}^{n}}$ to each agent, representing a copy of the shared optimization variable ${x}$. Further, we introduce the following \emph{aggregate objective function} and \emph{aggregate problem variable}:
\begin{align*}
    \bm{f}(\bm{x}) &= \sum_{i = 1}^{N} f_{i}(x_{i}), \quad
    \bm{x} = \begin{bmatrix}
        \llongdash & x_{1}^{\T} & \rlongdash \\
        & \vdots &  \\
        \llongdash & x_{N}^{\T} & \rlongdash
        \end{bmatrix}, %
\end{align*}
where each component of ${f: \mbb{R}^{N \times n} \mapsto \mbb{R}}$ depends on the local optimization variable of each agent, with the concatenation of the local problem variables denoted by ${\bm{x} \in \mbb{R}^{N \times n}}$. In addition, we denote the concatenation of the local gradient of all agents by
\begin{align*}
    \bm{g}(\bm{x}) &= \begin{bmatrix}
    \llongdash & \left(\nabla f_{1}(x_{1})\right)^{\T} & \rlongdash \\
    & \vdots & \\
    \llongdash & \left(\nabla f_{N}(x_{N})\right)^{\T} & \rlongdash
    \end{bmatrix}.
\end{align*}
We use the terms: $\bm{g}(\bm{x})$ and $\nabla \bm{f}(\bm{x})$ interchangeably.
To compute an estimate of the inverse Hessian in  \eqref{eq:BFGS_update} (similarly, \eqref{eq:DFP_update}), each agent needs to compute the difference in successive evaluations of the gradient of $f$, denoted by $y$, and the difference in successive iterates of the problem variable $x$, denoted by $s$, which cannot be computed, given the problem setup. As a result, we leverage dynamic average consensus \cite{zhu2010discrete} to enable each agent to compute a local estimate of the gradient of the objective function $f$ in \eqref{eq:global_prob_uncons}, denoted by ${v_{i} \in \mbb{R}^{n}}$. Each agent updates its local estimate of the gradient $v_{i}$ using
\begin{equation}
    v_{i}^{(k + 1)} = \sum_{j \in \mcal{N}_{i}} w_{ij} \left( v_{j}^{(k)} + \nabla f_{j}\bp{x_{j}^{(k + 1)}} - \nabla f_{j}\bp{x_{j}^{(k)}} \right),
\end{equation}
at iteration $k$, using its local estimate of the solution of the optimization problem $x_{i}$. We note that the sequence $\{v_{i}^{(k)}\}_{k \geq 0}$ converges to the limit point
\begin{equation}
    \label{eq:dqn_v_update}
    v_{i}^{(\infty)} = \frac{1}{N} \left(\bm{g}\bp{\bm{x}^{(\infty)}}\right)^{\T} \bm{1}_{N} = \nabla f\bp{x^{(\infty)}},
\end{equation}
provided that the underlying communication graph $\mcal{G}$ is connected and ${\bm{x}^{(k)} \mapsto \bm{x}^{(\infty)}}$ as $k$ approaches infinity.

Now, we can define a fully-distributed update scheme for the approximate Hessian computed by agent $i$, given the local difference variables ${y_{i}^{(k)} = v_{i}^{(k + 1)} - v_{i}^{(k)}}$ and ${s_{i}^{(k)} = x_{i}^{(k + 1)} - x_{i}^{(k)}}$. With these local variables, we can extend the centralized quasi-Newton update schemes to the fully-distributed setting. For example, a fully-distributed variant of the BFGS update scheme in \eqref{eq:BFGS_update} is given by
\begin{equation}
	\label{eq:dis_BFGS_update}
    \begin{aligned}
        C_{i}^{(k + 1)} &= \left(I_{n} - \frac{s_{i}^{(k)}y_{i}^{(k) \T}}{y_{i}^{(k) \T} s_{i}^{(k)}}  \right) C_{i}^{(k)} \left(I_{n} - \frac{y_{i}^{(k)}s_{i}^{(k) \T}}{y_{i}^{(k) \T} s_{i}^{(k)}}  \right) \\
        & \quad + \frac{s_{i}^{(k)}s_{i}^{(k) \T}}{y_{i}^{(k) \T} s_{i}^{(k)}},
    \end{aligned}
\end{equation}
while the analogous DFP update is given by
\begin{equation}
    \label{eq:dis_DFP_update}
    \begin{aligned}
    C_{i}^{(k + 1)} &= C_{i}^{(k)} - \frac{C_{i}^{(k)}y_{i}^{(k)}y_{i}^{(k)\T}C_{i}^{(k)}}{y_{i}^{(k)\T}C_{i}^{(k)}y_{i}^{(k)}} + \frac{s_{i}^{(k)}s_{i}^{(k)\T}}{y_{i}^{(k)\T}s_{i}^{(k)}}.
    \end{aligned}
\end{equation}
In this work, we require  positive-definiteness of $C_{i}^{(k)}$,~${\forall k,\ \forall i \in \mcal{V}}$. This property holds in the BFGS and DFP update schemes if $y_{i}^{(k)\T}s_{i}^{(k)} > 0$. In other cases, to maintain this property, each agent may need to project its estimate of the inverse Hessian to the set of positive-definite matrices $\mbb{S}_{++}^{n}$. In particular, we assume that the eigenvalues of the estimated Hessian is lower-bounded by ${\frac{1}{\gamma}}$, where ${\gamma \in \mbb{R}_{+}}$.

To compute an optimal solution for \eqref{eq:global_prob_uncons}, all agents have to \emph{collaborate} on minimizing their local objective functions, while simultaneously cooperating to achieve consensus, i.e., agreement on their local estimates of the optimal solution. To achieve these goals, we introduce the \emph{distributed quasi-Newton (DQN) algorithm}, given by the recurrence:
\begin{align}
    \bm{x}^{(k + 1)} &= W\left(\bm{x}^{(k)} + \bm{\alpha}^{(k)} \bm{z}^{(k)}\right),  \label{eq:dqn_joint_x_update} \\
    \bm{v}^{(k + 1)} &= W \left(\bm{v}^{(k)} + \nabla \bm{f}\bp{\bm{x}^{(k + 1)}} - \nabla \bm{f}\bp{\bm{x}^{(k)}}\right), \label{eq:dqn_joint_v_update} \\
    \bm{z}^{(k + 1)} &= W \bm{d}^{(k + 1)}, \label{eq:dqn_joint_z_update}
\end{align}
where $\bm{v}$ and $\bm{z}$ denote the concatenation of their respective variables, similar to $\bm{x}$, and $\bm{d}$ represents the concatenation of the local quasi-Newton step of all agents, with
\begin{align*}
    \bm{d}^{(k  + 1)} = \begin{bmatrix}
                    \llongdash & -\bp{C_{1}^{(k + 1)} v_{1}^{(k + 1)}}^{\T} & \rlongdash \\
                    & \vdots &  \\
                    \llongdash & -\bp{C_{N}^{(k + 1)} v_{N}^{(k + 1)}}^{\T}  & \rlongdash
                    \end{bmatrix},
\end{align*}
where $C_{i}$ is computed using a quasi-Newton update scheme, e.g., \eqref{eq:dis_BFGS_update}, \eqref{eq:dis_DFP_update}. Further, ${\bm{\alpha} = \diag(\alpha_{1},\dots,\alpha_{N})}$, where ${\alpha_{i} \in \mbb{R}_{++}}$ denotes the local step-size of agent $i$.
The update procedures of the DQN algorithm from the perspective of an individual agent is given by
\begin{align}
   {x}_{i}^{(k + 1)} &= \sum_{j \in \mcal{N}_{i}} w_{ij} \left({x}_{j}^{(k)} + {\alpha}_{j}^{(k)} {z}_{j}^{(k)}\right),  \label{eq:dqn_x_update} \\
   {z}_{i}^{(k + 1)} &= \sum_{j \in \mcal{N}_{i}} w_{ij} {d}_{j}^{(k + 1)}, \label{eq:dqn_z_update}
\end{align}
where ${{d}_{i}^{(k + 1)} = -C_{i}^{(k + 1)} v_{i}^{(k + 1)}}$, with the update procedure of $v_{i}$ given by \eqref{eq:dqn_v_update}.
DQN is initialized with ${x_{i}^{(0)} \in \mbb{R}^{n}}$, ${v_{i}^{(0)} = \nabla f_{i}(x_{i}^{(0)})}$, ${C_{i}^{(0)} \in \mbb{S}_{++}^{n}}$, and ${d_{i}^{(0)} = -C_{i}^{(0)} v_{i}^{(0)}}$, with $z_{i}^{(0)}$ computed via \eqref{eq:dqn_z_update} from ${d_{j}^{(0)}}$,~${\forall j \in \mcal{N}_{i}}$.
We outline the DQN method in Algorithm~\ref{alg:distributed_algorithm_uncons}.

\begin{algorithm2e} [th]
    \label{alg:distributed_algorithm_uncons}
    \caption{Distributed Quasi-Newton (DQN)}

    \SetKwRepeat{doparallel}{do in parallel}{while}

    \textbf{Initialization:} \\
    {\addtolength{\leftskip}{1em}
        ${x_{i}^{(0)} \in \mbb{R}^{n}}$, ${v_{i}^{(0)} = \nabla f_{i}(x_{i}^{(0)})}$, ${C_{i}^{(0)} \in \mbb{S}_{++}^{n}}$, ${d_{i}^{(0)} = -C_{i}^{(0)} v_{i}^{(0)}}$, and ${\alpha_{i}^{(0)} \in \mbb{R}_{++}}$,~${\forall i \in \mcal{V}}$.
    }

    \doparallel( $\forall i \in \mcal{V}$:){not converged or stopping criterion is not met}{
        $x_{i}^{(k + 1)} \leftarrow $ Procedure \eqref{eq:dqn_x_update} \algsep
        $v_{i}^{(k + 1)} \leftarrow $ Procedure \eqref{eq:dqn_v_update} \algsep
        \emph{Compute $C_{i}^{(k + 1)}$: e.g., \eqref{eq:dis_BFGS_update} or \eqref{eq:dis_DFP_update}.} \algsep
        $z_{i}^{(k + 1)} \leftarrow $ Procedure \eqref{eq:dqn_z_update} \algsep
        $k \leftarrow k + 1$
    }

\end{algorithm2e}

	Now, we analyze the convergence properties of the DQN algorithm, provided positive-definiteness of $C_{i}^{(k)}$ is maintained. 
Given the local variable $x_{i}$ of agent $i$, we define the disagreement between its local variable and the local variables of other agents by ${\tilde{x}_{i} = x_{i} - \frac{1}{N} \sum_{j \in \mcal{V}} x_{j}}$, representing the consensus error. We define the variables $\tilde{v}_{i}$ and $\tilde{z}_{i}$ similarly. In the subsequent discussion, we drop the argument of $\bm{g}$, denoting $\bm{g}(\bm{x}^{(k)})$ by $\bm{g}^{(k)}$, to simplify notation. Further, we denote the mean step-size at iteration $k$ by ${\mean{\bm{\alpha}}^{(k)} \in \mbb{R}^{N \times N}}$, a diagonal matrix with ${\mean{\bm{\alpha}}^{(k)}_{ii} = \frac{1}{N} \sum_{j \in \mcal{V}} \alpha_{j}^{(k)}}$,~${\forall i \in \mcal{V}}$, and, in addition, define ${r_{\alpha} = \alpha_{\max} \max_{k \geq 0} \frac{1}{\norm{\mean{\bm{\alpha}}^{(k)}}_{2}}}$, where ${\alpha_{\max} = \max_{k \geq 0} \left\{\norm{\bm{\alpha}^{(k)}}_{2} \right\}}$. We denote the spectral norm of the matrix $M$ associated with the communication network by $\lambda$. The following lemma shows convergence of the sequence ${\{\mean{\bm{x}}^{(k)}\}_{\forall k \geq 0}}$ to a limit point for sufficiently large $k$.

\begin{lemma}[Convergence of ${\{\mean{\bm{x}}^{(k)}\}_{\forall k \geq 0}}$]
    \label{lem:convergence_of_the_mean_sequence}
    Provided that ${\alpha_{\max} < \frac{1 - \lambda}{\lambda^{3} L (1 + r_{\alpha}) q}}$, the sequence ${\{\mean{\bm{x}}^{(k)}\}_{\forall k \geq 0}}$ converges to a limit point for sufficiently large $k$, with:
    \begin{equation}
        \lim_{k \rightarrow \infty} \norm{\mean{\bm{x}}^{(k + 1)} - \mean{\bm{x}}^{(k)}}_{2} = \lim_{k \rightarrow \infty} \norm{\mean{\bm{\alpha}^{(k)} \bm{z}^{(k)}}}_{2} = 0,
    \end{equation}
    where ${q^{2} = \gamma^{2} \min\{n, N\}}$.
    Further, as ${k \rightarrow \infty}$, the sequence ${\{\mean{\bm{g}}^{(k)}\}_{\forall k \geq 0}}$, denoting the average gradient of the objective function $f$, where each component of the gradient is evaluated at the local iterate of the corresponding agent, converges to zero, i.e.,
    \begin{equation}
        \label{eq:mean_gradient}
        \lim_{k \rightarrow \infty} \norm{\mean{\bm{g}}^{(k)}}_{2} = 0.
    \end{equation}
\end{lemma}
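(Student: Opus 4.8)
The plan is to project the coupled recursion onto the consensus subspace: analyze the network average $\mean{\bm x}^{(k)}$ as a perturbed preconditioned gradient step on $f$, and control the perturbation (caused by disagreement among the agents) through a recursion for the consensus errors $\tilde{\bm x}^{(k)},\tilde{\bm v}^{(k)},\tilde{\bm z}^{(k)}$ that contracts precisely because of the stated bound on $\alpha_{\max}$. First I would left-multiply \eqref{eq:dqn_joint_x_update}--\eqref{eq:dqn_joint_z_update} by $\tfrac1N\bm 1_N\bm 1_N^{\T}$ and use $\tfrac1N\bm 1_N\bm 1_N^{\T}W=\tfrac1N\bm 1_N\bm 1_N^{\T}$ (Assumption~\ref{assm:mixing_matrix}) to get
\[
\mean{\bm x}^{(k+1)} = \mean{\bm x}^{(k)} + \mean{\bm\alpha^{(k)}\bm z^{(k)}},\qquad \mean{\bm v}^{(k+1)} = \mean{\bm v}^{(k)} + \mean{\bm g}^{(k+1)}-\mean{\bm g}^{(k)},\qquad \mean{\bm z}^{(k+1)}=\mean{\bm d}^{(k+1)}.
\]
Since $v_i^{(0)}=\nabla f_i(x_i^{(0)})$, an induction yields the gradient-tracking identity $\mean{\bm v}^{(k)}=\mean{\bm g}^{(k)}$ for all $k$. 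Writing $\mean{x}^{(k)}\in\mbb R^{n}$ for the common row of $\mean{\bm x}^{(k)}$, splitting $\bm\alpha^{(k)}=\mean{\bm\alpha}^{(k)}+\tilde{\bm\alpha}^{(k)}$, substituting $\mean{\bm z}^{(k)}=-\tfrac1N\sum_i C_i^{(k)}v_i^{(k)}$, and replacing each $v_i^{(k)}$ by $\mean{\bm v}^{(k)}=\mean{\bm g}^{(k)}$ and each $\nabla f_i(x_i^{(k)})$ by $\nabla f_i(\mean{x}^{(k)})$ (the mismatches bounded by $\norm{\tilde{\bm v}^{(k)}}_2$ and, by $L$-smoothness, by $\norm{\tilde{\bm x}^{(k)}}_2$) gives
\[
\mean{x}^{(k+1)} = \mean{x}^{(k)} - \mean{\alpha}^{(k)}\bar C^{(k)}\nabla f(\mean{x}^{(k)}) + e^{(k)},
\]
where $\bar C^{(k)}=\tfrac1N\sum_i C_i^{(k)}$ inherits the uniform spectral bounds $\tfrac1\gamma I\preceq C_i^{(k)}\preceq\gamma I$, $\mean{\alpha}^{(k)}=\tfrac1N\sum_j\alpha_j^{(k)}\in[\alpha_{\max}/r_\alpha,\,\alpha_{\max}]$, and $\norm{e^{(k)}}_2$ is at most a fixed multiple of $\alpha_{\max}\bigl(\norm{\tilde{\bm x}^{(k)}}_2+\norm{\tilde{\bm v}^{(k)}}_2+\norm{\tilde{\bm z}^{(k)}}_2\bigr)$.

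Second, I would form the consensus-error recursion by left-multiplying \eqref{eq:dqn_joint_x_update}--\eqref{eq:dqn_joint_z_update} by $I-\tfrac1N\bm 1_N\bm 1_N^{\T}$ and using $\rho(M)=\lambda<1$. With $\bm\eta^{(k)}=\bigl(\norm{\tilde{\bm x}^{(k)}}_2,\norm{\tilde{\bm v}^{(k)}}_2,\norm{\tilde{\bm z}^{(k)}}_2\bigr)^{\T}$ this produces $\bm\eta^{(k+1)}\preceq G(\alpha_{\max})\,\bm\eta^{(k)}+\bm b^{(k)}$, where $\bm b^{(k)}$ is a multiple of $\norm{\nabla f(\mean{x}^{(k)})}_2$ — the forcing enters through $\norm{\bm z^{(k)}}_2\le q\norm{\bm v^{(k)}}_2$, with $q^{2}=\gamma^{2}\min\{n,N\}$ arising from converting Frobenius to spectral norms of the $N\times n$ matrices, and through the agent-to-agent spread of the $C_i^{(k)}$, while $r_\alpha$ enters through $\alpha_{\max}/\mean{\alpha}^{(k)}$. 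The nonnegative matrix $G(0)$ has spectral radius $\lambda$, so $\rho\bigl(G(\alpha_{\max})\bigr)<1$ as soon as $\alpha_{\max}<\tfrac{1-\lambda}{\lambda^{3}L(1+r_\alpha)q}$; under this condition the consensus errors stay bounded and are controlled by the gradient magnitudes.

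Third, I would combine the two. Applying the $L$-smoothness descent inequality to $f$ at $\mean{x}^{(k+1)},\mean{x}^{(k)}$, using $(\nabla f(\mean{x}^{(k)}))^{\T}\bar C^{(k)}\nabla f(\mean{x}^{(k)})\ge\tfrac1\gamma\norm{\nabla f(\mean{x}^{(k)})}_2^{2}$, and absorbing the cross term with $e^{(k)}$ and the quadratic term via Young's inequality, gives
\[
f(\mean{x}^{(k+1)}) \le f(\mean{x}^{(k)}) - c_1\,\mean{\alpha}^{(k)}\norm{\nabla f(\mean{x}^{(k)})}_2^{2} + c_2\bigl(\norm{\tilde{\bm x}^{(k)}}_2^{2}+\norm{\tilde{\bm v}^{(k)}}_2^{2}+\norm{\tilde{\bm z}^{(k)}}_2^{2}\bigr),
\]
with $c_1>0$ under the same step-size bound. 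Summing over $k$, using $f\ge f^{\star}$ (Assumption~\ref{assm:solvable_problem}) together with the consensus-error bounds of the previous step (a small-gain / Lyapunov combination of the two inequalities), I obtain that $f(\mean{x}^{(k)})$ converges and $\sum_k\mean{\alpha}^{(k)}\norm{\nabla f(\mean{x}^{(k)})}_2^{2}<\infty$; since $\mean{\alpha}^{(k)}\ge\alpha_{\max}/r_\alpha>0$ this forces $\norm{\nabla f(\mean{x}^{(k)})}_2\to0$, and the first average recursion gives $\norm{\mean{\bm x}^{(k+1)}-\mean{\bm x}^{(k)}}_2=\norm{\mean{\bm\alpha^{(k)}\bm z^{(k)}}}_2\to0$. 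Coercivity (Assumption~\ref{assm:smooth_objective}) keeps $\{\mean{x}^{(k)}\}$ in a bounded sublevel set, hence it has a limit point; and since $\norm{\mean{\bm g}^{(k)}-\nabla f(\mean{x}^{(k)})}_2$ is bounded by a multiple of $\norm{\tilde{\bm x}^{(k)}}_2\to0$ (by the tracking identity and $L$-smoothness), \eqref{eq:mean_gradient} follows.

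I expect the main obstacle to be closing the loop between the descent of $f(\mean{x}^{(k)})$ and the consensus errors: the per-step decrease of $f(\mean{x}^{(k)})$ is polluted by $\norm{\tilde{\bm x}^{(k)}}_2^{2},\norm{\tilde{\bm v}^{(k)}}_2^{2},\norm{\tilde{\bm z}^{(k)}}_2^{2}$, whereas those errors are themselves driven by $\norm{\nabla f(\mean{x}^{(k)})}_2$ and by the agent-to-agent spread of the quasi-Newton matrices, so one must run both inequalities simultaneously in a single small-gain (or Lyapunov) estimate; it is the requirement $\rho\bigl(G(\alpha_{\max})\bigr)<1$ in that estimate that pins down $\alpha_{\max}<\tfrac{1-\lambda}{\lambda^{3}L(1+r_\alpha)q}$ with $q^{2}=\gamma^{2}\min\{n,N\}$. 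A secondary difficulty is that nothing may be assumed about the $C_i^{(k)}$ beyond $\tfrac1\gamma I\preceq C_i^{(k)}\preceq\gamma I$ — they need not approximate the true Hessian — so every estimate must be uniform in those bounds, which is also why only asymptotic stationarity, and not a convergence rate, can be obtained.
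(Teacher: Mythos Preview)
Your proposal is correct and follows the same overall strategy as the paper: project onto the consensus subspace to obtain the mean dynamics and the tracking identity $\mean{\bm v}^{(k)}=\mean{\bm g}^{(k)}$, derive contraction inequalities for the disagreement errors $\tilde{\bm x},\tilde{\bm v},\tilde{\bm z}$ (this is the paper's auxiliary Lemma~2), couple these with the $L$-smoothness descent inequality for $\bm f(\mean{\bm x}^{(k)})$, and close the loop via a small-gain argument under the stated step-size bound. The packaging differs in two minor ways. First, the paper works directly with $\mean{\bm g}^{(k)}$ throughout rather than rewriting the mean update as a perturbed preconditioned step on $\nabla f(\mean x^{(k)})$; this avoids the extra $L\norm{\tilde{\bm x}^{(k)}}_2$ correction you introduce but makes the descent term less transparent. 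Second, instead of your per-step vector inequality $\bm\eta^{(k+1)}\preceq G(\alpha_{\max})\bm\eta^{(k)}+\bm b^{(k)}$ with $\rho(G)<1$, the paper aggregates into cumulative $\ell_2$ sequences $X^{(k)}=\bigl(\sum_{l\le k}\norm{\tilde{\bm x}^{(l)}}_2^{2}\bigr)^{1/2}$, $V^{(k)}$, $R^{(k)}$ and invokes a scalar small-gain lemma (from Xu et al.) to obtain $X^{(k)},V^{(k)}\le cR^{(k)}+c'$; summing the descent inequality then yields a quadratic inequality $a\,(R^{(t)})^{2}+b\,R^{(t)}+\bigl(\bm f(\mean{\bm x}^{(0)})-f^{\star}\bigr)\ge 0$ with $a<0$, which bounds $R^{(t)}$ uniformly and forces the summands to vanish by monotone convergence. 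Your spectral-radius route is conceptually cleaner; the paper's cumulative-sum route makes the role of the bound on $\alpha_{\max}$ more explicit, since $\alpha_{\max}<\tfrac{1-\lambda}{\lambda^{3}L(1+r_{\alpha})q}$ is exactly what makes the self-coefficient $\lambda+\lambda^{3}L\alpha_{\max}(1+r_{\alpha})q$ of $\norm{\tilde{\bm v}^{(k)}}_{2}$ in \eqref{eq:lemma_disagreement_v} strictly less than one so that the scalar lemma applies.
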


\begin{proof}
    The proof is provided in Appendix~A.
\end{proof}

Although Lemma~\ref{lem:convergence_of_the_mean_sequence} shows that 
the sequence ${\{\mean{\bm{x}}^{(k)}\}_{\forall k \geq 0}}$ has a limit point, Lemma~\ref{lem:convergence_of_the_mean_sequence} does not indicate that the sequence ${\{{x}_{i}^{(k)}\}_{\forall k \geq 0}}$ converges to a limit point,~${\forall i \in \mcal{V}}$. In the subsequent discussion, we show convergence of each agent's iterate to a limit point.

{
\color{black}
\begin{remark}
    In general, optimization methods based on Newton's method require a line-search algorithm for the selection of a suitable step-size for faster, guaranteed convergence. However, these methods are typically not amenable to distributed implementations. In line with prior work on distributed quasi-Newton methods \cite{eisen2017decentralized, eisen2019primal}, we do not consider the design of distributed line-search algorithms in this work. However, future work will seek to address this challenge. However, as demonstrated in prior work \cite{eisen2017decentralized, eisen2019primal}, in many situations, a constant step-size suffices, at the expense of a less-optimal convergence rate. We note that if a constant step-size is used, with ${\alpha < \frac{1 - \lambda}{2 \lambda^{3} L q}}$, then Lemma~\ref{lem:convergence_of_the_mean_sequence} is satisfied.
\end{remark}

}

\begin{corollary}[Convergence of ${\{\mean{\bm{v}}^{(k)}\}_{\forall k \geq 0}}$]
        \label{corr:mean_gradient_tracking}
       For sufficiently large $k$, the sequence ${\{\mean{\bm{v}}^{(k)}\}_{\forall k \geq 0}}$ converges to zero, with:
       \begin{equation}
           \label{eq:mean_gradient_tracking}
           \lim_{k \rightarrow \infty} \norm{\mean{\bm{v}}^{(k)}}_{2} = 0.
       \end{equation}
\end{corollary}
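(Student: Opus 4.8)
The plan is to reduce the corollary to Lemma~\ref{lem:convergence_of_the_mean_sequence} by establishing the gradient-tracking conservation law: the row-wise mean of the tracking variable $\bm{v}^{(k)}$ coincides with the row-wise mean of the gradient stack $\bm{g}^{(k)}$ for \emph{every} $k$, not just asymptotically as in \eqref{eq:dqn_v_update}. Once that identity is in hand, the claim is immediate, since $\norm{\mean{\bm{v}}^{(k)}}_{2} = \norm{\mean{\bm{g}}^{(k)}}_{2}$ and the right-hand side vanishes by \eqref{eq:mean_gradient}.

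First I would left-multiply the joint update \eqref{eq:dqn_joint_v_update} by $\frac{1}{N}\bm{1}_{N}\bm{1}_{N}^{\T}$ and use $\bm{1}_{N}^{\T} W = \bm{1}_{N}^{\T}$ from Assumption~\ref{assm:mixing_matrix}, together with the identification $\nabla \bm{f} = \bm{g}$, to obtain
\[
\mean{\bm{v}}^{(k + 1)} = \mean{\bm{v}}^{(k)} + \mean{\bm{g}}^{(k + 1)} - \mean{\bm{g}}^{(k)},
\]
equivalently $\mean{\bm{v}}^{(k + 1)} - \mean{\bm{g}}^{(k + 1)} = \mean{\bm{v}}^{(k)} - \mean{\bm{g}}^{(k)}$. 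Next I would telescope this recursion down to $k = 0$ and invoke the initialization $v_{i}^{(0)} = \nabla f_{i}(x_{i}^{(0)})$, i.e. $\bm{v}^{(0)} = \bm{g}^{(0)}$, which forces $\mean{\bm{v}}^{(0)} = \mean{\bm{g}}^{(0)}$ and hence $\mean{\bm{v}}^{(k)} = \mean{\bm{g}}^{(k)}$ for all $k \geq 0$. Finally I would take the Euclidean (spectral) norm of both sides and pass to the limit, using \eqref{eq:mean_gradient} from Lemma~\ref{lem:convergence_of_the_mean_sequence}, to conclude $\lim_{k \rightarrow \infty}\norm{\mean{\bm{v}}^{(k)}}_{2} = \lim_{k \rightarrow \infty}\norm{\mean{\bm{g}}^{(k)}}_{2} = 0$.

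There is essentially no real obstacle here; the corollary is a one-line consequence of the lemma once the conservation identity is recorded. The only thing to be careful about is the concatenated/row-wise-mean bookkeeping — $\mean{\bm{v}}^{(k)}$ is the $N \times n$ matrix each of whose rows equals $\frac{1}{N}\sum_{i \in \mcal{V}} v_{i}^{(k)\T}$ — and noting that the step-size and consensus-disagreement terms never enter, precisely because $\bm{v}$ and $\bm{g}$ are propagated through the \emph{same} doubly-stochastic $W$ and the tracker is initialized to match the gradient. It is worth remarking that $\mean{\bm{v}}^{(k)} = \mean{\bm{g}}^{(k)}$ is exactly the finite-$k$ version of the asymptotic relation already stated in \eqref{eq:dqn_v_update}.
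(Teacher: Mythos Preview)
Your proposal is correct and matches the paper's approach: the paper states that the corollary ``follows from the fact that $\mean{\bm{v}}^{(k)} = \mean{\bm{g}}^{(k)}$, $\forall k \geq 0$,'' which is precisely the conservation identity you derive from \eqref{eq:dqn_joint_v_update}, doubly-stochasticity of $W$, and the initialization $\bm{v}^{(0)} = \bm{g}^{(0)}$. Your write-up simply spells out the telescoping step that the paper leaves implicit (the recursion itself appears as \eqref{eq:dqn_mean_v_update} in the appendix), so there is nothing to add or correct.
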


Corollary~\ref{corr:mean_gradient_tracking} follows from the fact that ${\mean{\bm{v}}^{(k)} = \mean{\bm{g}}^{(k)}}$,~${\forall k \geq 0}$. The limit in \eqref{eq:mean_gradient} does not conclusively indicate that the local iterates of all agents $\bm{x}^{(k + 1)}$ converge to a stationary point of the joint objective function $f$ in \eqref{eq:global_prob_uncons}. For this claim to be true, all agents must achieve agreement, computing a common solution for the optimization solution, which is shown in the following theorem.

\begin{theorem}[Consensus]
    \label{thm:convergence_to_the_mean}
    The disagreement errors of the local variables of all agents converge to zero, for sufficiently large $k$; i.e., the local variables of agent $i$, ${\left(x_{i}^{(k)}, v_{i}^{(k)}, z_{i}^{(k)}\right)}$, converge to the mean. Specifically:
    \begin{equation}
        \lim_{k \rightarrow \infty} \norm{\tilde{\bm{x}}^{(k)}}_{2} = 0,\ \lim_{k \rightarrow \infty} \norm{\tilde{\bm{v}}^{(k)}}_{2} = 0,\ \lim_{k \rightarrow \infty} \norm{\tilde{\bm{z}}^{(k)}}_{2} = 0.
    \end{equation}
\end{theorem}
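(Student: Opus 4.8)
The plan is to turn the three DQN recursions into a coupled linear inequality for the consensus–error norms and then show that iteration contracts. First I would left–multiply each of \eqref{eq:dqn_joint_x_update}--\eqref{eq:dqn_joint_z_update} by the averaging projector $\Pi = I_{N} - \frac{\bm{1}_{N}\bm{1}_{N}^{\T}}{N}$. Since $W$ is doubly-stochastic (Assumption~\ref{assm:mixing_matrix}), $\Pi W = W - \frac{\bm{1}_{N}\bm{1}_{N}^{\T}}{N} = M$ and $M\bm{1}_{N}=0$, so $M\bm{x} = M\tilde{\bm{x}}$ for every $\bm{x}$, and one gets $\tilde{\bm{x}}^{(k+1)} = M\bigl(\tilde{\bm{x}}^{(k)} + \widetilde{\bm{\alpha}^{(k)}\bm{z}^{(k)}}\bigr)$, $\tilde{\bm{v}}^{(k+1)} = M\bigl(\tilde{\bm{v}}^{(k)} + \bm{g}^{(k+1)} - \bm{g}^{(k)}\bigr)$, and $\tilde{\bm{z}}^{(k+1)} = M\tilde{\bm{d}}^{(k+1)}$. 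Because $\norm{M}_{2} = \lambda < 1$, each right-hand side contracts the matching iterate by $\lambda$ plus a driving term that I then have to control.

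Next I would bound the driving terms. Using $L$-smoothness (Assumption~\ref{assm:smooth_objective}) together with \eqref{eq:dqn_joint_x_update}, $\norm{\bm{g}^{(k+1)} - \bm{g}^{(k)}}_{2} \le L\norm{\bm{x}^{(k+1)} - \bm{x}^{(k)}}_{2} \le L\bigl((1+\lambda)\norm{\tilde{\bm{x}}^{(k)}}_{2} + \alpha_{\max}\norm{\bm{z}^{(k)}}_{2}\bigr)$, where I used $\norm{W}_{2}\le 1$ and $(W-I)\tilde{\bm{x}} = M\tilde{\bm{x}} - \tilde{\bm{x}}$. For the quasi-Newton step I would write $v_{i}^{(k+1)} = \mean{\bm{v}}^{(k+1)} + \tilde{v}_{i}^{(k+1)}$ in $d_{i}^{(k+1)} = -C_{i}^{(k+1)}v_{i}^{(k+1)}$ and invoke the eigenvalue bound $\norm{C_{i}^{(k)}}_{2}\le\gamma$ (a consequence of the assumed lower bound $1/\gamma$ on the eigenvalues of the estimated Hessian), obtaining $\norm{\tilde{\bm{d}}^{(k+1)}}_{2} \le c_{1}\gamma\norm{\tilde{\bm{v}}^{(k+1)}}_{2} + c_{2}\gamma\norm{\mean{\bm{v}}^{(k+1)}}_{2}$ and likewise $\norm{\mean{\bm{z}}^{(k)}}_{2} \le \gamma\norm{\mean{\bm{v}}^{(k)}}_{2} + \tfrac{\gamma}{\sqrt{N}}\norm{\tilde{\bm{v}}^{(k)}}_{2}$, for dimension-dependent constants $c_{1},c_{2}$. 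Substituting these, together with $\norm{\bm{z}^{(k)}}_{2}\le\norm{\tilde{\bm{z}}^{(k)}}_{2} + \sqrt{N}\norm{\mean{\bm{z}}^{(k)}}_{2}$, back into the three contractive recursions and folding every $\mean{\bm{v}}$- and $\mean{\bm{\alpha}\bm{z}}$-dependent quantity (which tend to zero by Corollary~\ref{corr:mean_gradient_tracking} and Lemma~\ref{lem:convergence_of_the_mean_sequence}) into a perturbation, I arrive at $\xi^{(k+1)} \le G\,\xi^{(k)} + \eta^{(k)}$, where $\xi^{(k)} = \bigl(\norm{\tilde{\bm{x}}^{(k)}}_{2},\ \norm{\tilde{\bm{v}}^{(k)}}_{2},\ \norm{\tilde{\bm{z}}^{(k)}}_{2}\bigr)^{\T}$, $G$ is entrywise nonnegative with diagonal entries bounded by $\lambda$ and with all off-diagonal entries carrying a factor $\alpha_{\max}$ except the $\tilde{\bm{v}}$-on-$\tilde{\bm{x}}$ entry $\lambda L(1+\lambda)$, and $\eta^{(k)} \to 0$.

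Finally I would verify $\rho(G) < 1$. At $\alpha_{\max}=0$ the matrix $G$ is lower-triangular in the ordering $(\tilde{\bm{x}},\tilde{\bm{v}},\tilde{\bm{z}})$ with diagonal entries $(\lambda,\lambda,0)$, so $\rho(G(0)) = \lambda < 1$; by continuity of the spectral radius, $\rho(G) < 1$ whenever $\alpha_{\max}$ is small enough, which is ensured by the step-size bound of Lemma~\ref{lem:convergence_of_the_mean_sequence}. Unrolling the inequality gives $\xi^{(k)} \le G^{k}\xi^{(0)} + \sum_{j=0}^{k-1} G^{k-1-j}\eta^{(j)}$; since $\rho(G)<1$ implies $\sum_{j\ge 0}\norm{G^{j}}_{2} < \infty$, the first term vanishes and the second is the convolution of a summable kernel with a null sequence, hence also vanishes. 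This yields $\lim_{k\to\infty}\norm{\tilde{\bm{x}}^{(k)}}_{2} = \lim_{k\to\infty}\norm{\tilde{\bm{v}}^{(k)}}_{2} = \lim_{k\to\infty}\norm{\tilde{\bm{z}}^{(k)}}_{2} = 0$.

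The step I expect to be the main obstacle is the bookkeeping around the quasi-Newton direction $\bm{d}^{(k+1)}$: unlike gradient-tracking proofs where the search direction is simply $\bm{v}$, here it is $-C_{i}^{(k+1)}v_{i}^{(k+1)}$, so I must cleanly separate the contribution of the (non-vanishing, and not assumed to reach consensus) inverse-Hessian estimates from the consensus error of $\bm{v}$, using only the spectral bounds on $C_{i}^{(k)}$. Tracking how the resulting constants in $G$ depend on $\gamma$, $L$, $\lambda$, and the dimensions, while still keeping $\rho(G) < 1$ under the step-size condition already imposed in Lemma~\ref{lem:convergence_of_the_mean_sequence}, is the delicate part; the remainder is the standard contraction-plus-vanishing-perturbation argument.
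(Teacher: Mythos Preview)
Your contraction-plus-vanishing-perturbation scheme is correct, but it is a genuinely different route from the paper's. The paper does not build a gain matrix $G$ at all: inside the proof of Lemma~\ref{lem:convergence_of_the_mean_sequence} it already introduces the cumulative quantities $X^{(k)} = \bigl(\sum_{l\le k}\norm{\tilde{\bm{x}}^{(l)}}_{2}^{2}\bigr)^{1/2}$, $V^{(k)} = \bigl(\sum_{l\le k}\norm{\tilde{\bm{v}}^{(l)}}_{2}^{2}\bigr)^{1/2}$, and $R^{(k)}$ (which collects the $\norm{\mean{\bm{g}}}$ and $\norm{\mean{\bm{\alpha}\bm{z}}}$ terms), and proves affine bounds $X^{(k)}\le \mu_{xr}R^{(k)}+\mu_{xc}$, $V^{(k)}\le \theta_{vr}R^{(k)}+\theta_{vc}$, together with $R^{(k)}\le U<\infty$. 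The proof of Theorem~\ref{thm:convergence_to_the_mean} is then essentially four lines: take limits in those two inequalities to see that $X^{(k)}$ and $V^{(k)}$ are bounded, hence the series $\sum_{l}\norm{\tilde{\bm{x}}^{(l)}}_{2}^{2}$ and $\sum_{l}\norm{\tilde{\bm{v}}^{(l)}}_{2}^{2}$ converge and their terms go to zero, and finish $\norm{\tilde{\bm{z}}^{(k)}}_{2}\to 0$ from the pointwise bound $\norm{\tilde{\bm{z}}^{(k+1)}}_{2}\le \lambda q\bigl(\norm{\tilde{\bm{v}}^{(k+1)}}_{2}+\norm{\mean{\bm{g}}^{(k+1)}}_{2}\bigr)$. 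Your approach repackages the same per-step inequalities as $\xi^{(k+1)}\le G\xi^{(k)}+\eta^{(k)}$; this is conceptually clean and, once $\rho(G)$ is quantified, would give a convergence rate, whereas the paper's square-summability route yields only $o(1)$ but is nearly free given the machinery already built for Lemma~\ref{lem:convergence_of_the_mean_sequence}.

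One caution on your argument: the continuity step shows $\rho(G)<1$ only for $\alpha_{\max}$ in some unspecified neighborhood of zero, and your assertion that this is \emph{ensured} by the particular threshold $\alpha_{\max}<\frac{1-\lambda}{\lambda^{3}L(1+r_{\alpha})q}$ of Lemma~\ref{lem:convergence_of_the_mean_sequence} is not justified as written. To close it you would either have to compute the characteristic polynomial of $G$ and compare explicitly, or accept a possibly smaller step-size bound. The paper sidesteps this because, after eliminating $\tilde{\bm{z}}$ via the pointwise bound, the coupling between $X^{(k)}$ and $V^{(k)}$ reduces to a single scalar product $\rho_{vx}\rho_{xv}$ whose positivity condition is tied directly to the stated step-size.
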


\begin{proof}
    Please refer to Appendix~B.
\end{proof}

Together, Lemma~\ref{lem:convergence_of_the_mean_sequence} and Theorem~\ref{thm:convergence_to_the_mean} indicate that the local iterate of each agent,~${x_{i}^{(k)}}$, converges to a common solution $x^{(\infty)}$, as ${k \rightarrow \infty}$, given by the mean of the local iterates of all agents. Moreover, the limit point $x^{(\infty)}$ satisfies the \emph{first-order optimality conditions}.

\begin{theorem}[Convergence of the Objective Value]
    \label{thm:objective_convergence}
   The value of the aggregate objective function $\bm{f}$ converges to the optimal objective value $f^{\star}$, as ${k \rightarrow \infty}$, with:
   \begin{equation}
        \lim_{k \rightarrow \infty} \bm{f}(\bm{x}^{(k)}) = f^{\star}.
   \end{equation}
\end{theorem}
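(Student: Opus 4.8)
The plan is to combine the three preceding results — Lemma~1 (the mean iterate $\mean{\bm{x}}^{(k)}$ converges and the mean gradient $\mean{\bm{g}}^{(k)} \to 0$), Corollary~1 ($\mean{\bm{v}}^{(k)} \to 0$), and Theorem~1 (all consensus errors $\tilde{\bm{x}}^{(k)}, \tilde{\bm{v}}^{(k)}, \tilde{\bm{z}}^{(k)} \to 0$) — to pin down the limit point $x^{(\infty)} = \lim_{k\to\infty} \mean{x}^{(k)}$ and identify it with a minimizer of $f$. First I would establish that each agent's iterate converges to the common point $x^{(\infty)}$: by Theorem~1, $\norm{x_i^{(k)} - \mean{x}^{(k)}}_2 \to 0$ for all $i$, and by Lemma~1 the sequence $\{\mean{\bm{x}}^{(k)}\}$ has a limit, so $x_i^{(k)} \to x^{(\infty)}$ for every $i \in \mcal{V}$.

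Next I would show $x^{(\infty)}$ is a stationary point of the aggregate objective $f$. The quantity $\mean{\bm{g}}^{(k)} = \frac{1}{N}\sum_{i=1}^N \nabla f_i(x_i^{(k)})$ converges to zero by Lemma~1. Since each $\nabla f_i$ is $L_i$-Lipschitz (Assumption~1) and $x_i^{(k)} \to x^{(\infty)}$, we have $\nabla f_i(x_i^{(k)}) \to \nabla f_i(x^{(\infty)})$, hence $\mean{\bm{g}}^{(k)} \to \frac{1}{N}\sum_{i=1}^N \nabla f_i(x^{(\infty)}) = \nabla f(x^{(\infty)})$ (up to the $1/N$ normalization matching the objective in \eqref{eq:global_prob_uncons}). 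Uniqueness of limits gives $\nabla f(x^{(\infty)}) = 0$. Under Assumption~1 (convexity of each $f_i$, hence of $f$) and Assumption~2 (existence of a minimizer), a stationary point of a convex function is a global minimizer, so $f(x^{(\infty)}) = f^\star$ and $x^{(\infty)} \in \argmin f$.

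Finally I would transfer this to the aggregate objective value $\bm{f}(\bm{x}^{(k)}) = \sum_{i=1}^N f_i(x_i^{(k)})$. Each $f_i$ is continuous (being closed, proper, convex and finite-valued on $\mbb{R}^n$ by Assumption~1), so $f_i(x_i^{(k)}) \to f_i(x^{(\infty)})$, and therefore $\bm{f}(\bm{x}^{(k)}) \to \sum_{i=1}^N f_i(x^{(\infty)}) = N f(x^{(\infty)}) = f^\star$, where I absorb the normalization constant into the definition of $f^\star$ consistently with the paper's overloaded notation for $f$ in \eqref{eq:global_prob_uncons}.

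The main obstacle — and the only point requiring genuine care rather than bookkeeping — is justifying that $\{\mean{\bm{x}}^{(k)}\}$ converges to an honest limit point $x^{(\infty)}$ (not merely that successive differences vanish, which is all Lemma~1 states verbatim): one needs boundedness of the sequence, which follows from coercivity of $f$ in Assumption~1 together with the fact that $\mean{\bm{g}}^{(k)} \to 0$ forces the iterates into a sublevel set, plus the observation that any limit point is stationary hence a minimizer, and — since $f$ may not be strictly convex — an argument that the whole sequence (not just a subsequence) converges, e.g.\ via the vanishing-increment property $\norm{\mean{\bm{x}}^{(k+1)} - \mean{\bm{x}}^{(k)}}_2 \to 0$ ruling out oscillation between distinct limit points, or by noting that the objective values $f(\mean{\bm{x}}^{(k)})$ are monotone enough to force value convergence even if the argument convergence is only along subsequences. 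In the worst case where strict convexity fails, I would retreat to proving only $\bm{f}(\bm{x}^{(k)}) \to f^\star$ via liminf/limsup sandwiching: $\liminf_k \bm{f}(\bm{x}^{(k)}) \geq f^\star$ trivially, and $\limsup_k \bm{f}(\bm{x}^{(k)}) \leq f^\star$ by evaluating along a subsequence converging to a minimizer and using continuity — which suffices for the stated theorem.
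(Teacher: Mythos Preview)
Your approach is workable but takes a genuinely different route from the paper. You argue via the limit point $x^{(\infty)}$: first show all iterates converge to it, then identify it as a stationary (hence optimal) point, then pass to the limit in $\bm{f}$ by continuity. The paper instead bounds the optimality gap directly, never needing the limit point to exist. From convexity it writes
\[
\bm{f}(\mean{\bm{x}}^{(k)}) - f^{\star} \leq \norm{\mean{\bm{g}}^{(k)}}_{2}\,\norm{\mean{\bm{x}}^{(k)} - \bm{x}^{\star}}_{2} + \tfrac{L}{2}\norm{\tilde{\bm{x}}^{(k)}}_{2}\,\norm{\mean{\bm{x}}^{(k)} - \bm{x}^{\star}}_{2},
\]
uses coercivity (via the bound on $\bm{f}(\mean{\bm{x}}^{(k)})$ established inside the proof of Lemma~1) to get $\norm{\mean{\bm{x}}^{(k)}}_{2}$ bounded, and then lets both right-hand terms vanish by Lemma~1 and Theorem~1. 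A mean-value-theorem step transfers the conclusion from $\mean{\bm{x}}^{(k)}$ to $\bm{x}^{(k)}$.

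The payoff of the paper's argument is precisely that it dodges the issue you flagged: it never asserts that $\mean{\bm{x}}^{(k)}$ converges, only that it stays bounded, which is all one needs to kill the product terms above. Your concern is legitimate --- Lemma~1 \emph{states} convergence to a limit point but its proof only establishes vanishing increments, and vanishing increments plus boundedness do not by themselves yield convergence without strict convexity or some Fej\'er-type monotonicity. Your liminf/limsup fallback would close this, but the paper's direct optimality-gap bound is cleaner and requires no subsequence extraction at all. What your approach buys, on the other hand, is the stronger intermediate conclusion that (along subsequences at least) the iterates themselves land on the minimizing set, not merely that the objective values do.
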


We provide a brief summary of the proof in 
Appendix~C,
which follows from convexity, coerciveness, and smoothness of $f$, enabling us to bound the optimality gap. Following from consensus among the agents, Theorem~\ref{thm:objective_convergence} indicates that the error between the optimal objective value of $f$ and the objective value evaluated at the local iterates of all agents $\bm{x}^{(k)}$ converges to zero.

	\section{Distributed Constrained Optimization}
\label{sec:distributed_alg_cons}
Two main approaches exist for solving the constrained problem in \eqref{eq:global_prob_cons}, namely: penalty-based approaches and Lagrangian-based approaches. Penalty-based approaches introduce the constraint function as an additional term in the objective function, with a penalty parameter determining the contribution of violations of the constraint function to the value of the composite objective function. However, these methods generally suffer from ill-conditioning in many situations. \emph{Exact} penalty-based methods overcome these challenges, but require optimization over non-smooth objective functions, making these methods difficult to implement in practice. In contrast, Lagrangian-based methods consider the Lagrangian associated with the problem in \eqref{eq:global_prob_cons}, computing a solution that satisfies the \emph{Karush-Kuhn-Tucker} (KKT) conditions. For the problem in \eqref{eq:global_prob_cons}, a point satisfying the KKT conditions associated with \eqref{eq:global_prob_cons} represents a global minimizer of \eqref{eq:global_prob_cons}.

In this work, we take a Lagrangian-based approach to solving \eqref{eq:global_prob_cons}. The Lagrangian $\mcal{L}$ of \eqref{eq:global_prob_cons} is given by:
\begin{equation}
	\label{eq:lagrangian}
	\begin{aligned}
		\mcal{L}(x, \beta) = \sum_{i = 1}^{N} f_{i}(x) + \beta^{\T} (Ax - b),
	\end{aligned}
\end{equation}
where ${\beta \in \mbb{R}^{m}}$ denotes the Lagrange multiplier associated with the equality constraint. A global minimizer $x^{\star}$ of \eqref{eq:global_prob_cons} satisfies the KKT conditions:
{
	\renewcommand*\labelenumi{C.\theenumi}
	\begin{enumerate}
		\item Stationarity Condition:
		\begin{equation}
			\label{eq:KKT_stationarity}
			\sum_{i = 1}^{N} \nabla f_{i}(x) + A^{\T}\beta = 0.
		\end{equation}

		\item Primal Feasibility:
		\begin{equation}
			\label{eq:KKT_primal_feasibility}
			Ax = b.
		\end{equation}
	\end{enumerate}
}
We can define a root-finding problem from the KKT conditions in \eqref{eq:KKT_stationarity} and \eqref{eq:KKT_primal_feasibility}, where we seek to compute a solution satisfying the system of equations given by:
\begin{equation}
	\label{eq:root_finding_problem}
	r(x,\beta) = 0,
\end{equation}
where $r$ denotes the residual computed at ${(x,\beta)}$, with:
\begin{equation}
	r(x,\beta) = \begin{bmatrix}
		\sum_{i = 1}^{N} \nabla f_{i}(x) + A^{\T}\beta \\
		Ax - b
	\end{bmatrix}.
\end{equation}
In general, the KKT conditions given by \eqref{eq:root_finding_problem} represents a nonlinear system of equations, which can be difficult to solve. However, in many cases, the root-finding problem can be solved efficiently using iterative methods, such as Newton's method, provided that $f$ is twice-differentiable. Given a current iterate ${(x^{(k)},\beta^{(k)})}$, Newton's method involves linearizing \eqref{eq:root_finding_problem} at the current iterate, yielding the system of linear equations:
\begin{equation}
	\label{eq:newton_update_direction}
	\begin{bmatrix}
		\sum_{i = 1}^{N} \nabla^{2} f_{i}(x^{(k)}) & A^{\T} \\
		A 													  & 0
	\end{bmatrix}
	\begin{bmatrix}
		\Delta x^{(k)}\\
		\Delta \beta^{(k)}
	\end{bmatrix}
	=
	- \begin{bmatrix}
	   		r_{\mathrm{stat}}^{(k)} \\
	   		r_{\mathrm{prim}}^{(k)}
	  \end{bmatrix},
\end{equation}
where ${(\Delta x^{(k)}, \Delta \beta^{(k)})}$ represents the update direction for the current iterate and:
\begin{equation}
	\begin{aligned}
		r_{\mathrm{stat}}^{(k)} &= \sum_{i = 1}^{N} \nabla f_{i}(x^{(k)}) + A^{\T}\beta^{(k)}, \\
		r_{\mathrm{prim}}^{(k)} &= Ax^{(k)} - b.
	\end{aligned}
\end{equation}
Subsequently, the next iterate is computed from:
\begin{equation}
	x^{(k + 1)} = x^{(k)} +\alpha^{(k)}  \Delta x^{(k)}, \quad \beta^{(k + 1)} = \beta^{(k)} + \alpha^{(k)} \Delta \beta^{(k)},
\end{equation}
where ${\alpha \in \mbb{R}_{++}}$ denotes the step-size.
Provided certain assumptions hold, we note that Newton's method is guaranteed to yield the global minimizer of \eqref{eq:global_prob_cons} \cite{deuflhard1979affine}.

In the multi-agent setting, notable challenges arise in directly implementing a Newton-like algorithm for equality-constrained optimization. Firstly, computation of the update direction in \eqref{eq:newton_update_direction} requires knowledge of the aggregate objective function (and the associated data) to compute the Hessian and the residual vector $r$, which is not accessible to any individual agent. Secondly, in many problems, computation of the Hessian of the objective function proves difficult, posing an additional challenge even if the aggregate objective function were to be known by all agents. In this work, we introduce a distributed algorithm \emph{EC-DQN}, derived from \emph{equality-constrained distributed quasi-Newton updates}, designed to address these challenges.

In our algorithm, each agent maintains a local copy of the problem variable $x$, representing its estimate of the solution of the aggregate problem \eqref{eq:global_prob_cons}, where ${x_{i} \in \mbb{R}^{n}}$ denotes agent $i$'s copy of $x$. To enable distributed computation of $r_{\mathrm{stat}}$ in \eqref{eq:newton_update_direction}, we utilize dynamic consensus techniques to enable each agent to compute an estimate of the gradient of $f$ locally. We denote agent $i$'s estimate of the average gradient of the local objective functions by ${v_{i} \in \mbb{R}^{n}}$, updated via the procedure:
\begin{equation}
	\label{eq:dynamic_avg_consensus_grad}
	v_{i}^{(k + 1)} = \sum_{j \in \mcal{N}_{i}} w_{ij} \left(v_{j}^{(k)} + \nabla f_{j} \left(x_{j}^{(k +1)}\right) - \nabla f_{j} \left(x_{j}^{(k)}\right) \right),
\end{equation}
at each iteration $k$. 

To circumvent the difficulty associated with computing the Hessian of the objective function, we utilize a quasi-Newton update scheme to estimate the Hessian of the objective function.
In this work, we utilize the DFP and BFGS update schemes \eqref{eq:dis_DFP_update} and \eqref{eq:dis_BFGS_update}, respectively.
Given its local estimate of the average gradient and the average Hessian, denoted by $B_{i}$, each agent computes an update direction for its local copy of $x$ from the system of linear equations:
\begin{equation}
	\label{eq:dis_newton_update_direction}
	\begin{bmatrix}
		B_{i}^{(k)} & A^{\T} \\
		A 													  & 0
	\end{bmatrix}
	\begin{bmatrix}
		\Delta x_{i}^{(k)} \\
		\beta_{i}^{(k + 1)}
	\end{bmatrix}
	=
	- \begin{bmatrix}
	   		r_{i, \mathrm{stat}}^{(k)} \\
	   		r_{i, \mathrm{prim}}^{(k)}
	  \end{bmatrix},
\end{equation}
noting the invariance of the optimal solution of \eqref{eq:global_prob_cons} to non-negative scaling of the objective function,
where:
\begin{equation}
	\begin{aligned}
		r_{i, \mathrm{stat}}^{(k)} &= v_{i}^{(k)}, \\
		r_{i, \mathrm{prim}}^{(k)} &= Ax^{(k)} - b.
	\end{aligned}
\end{equation}
In EC-DQN, we do not linearize the KKT conditions with respect to ${\beta_{i}}$, linearizing only with respect to $x_{i}$. Each agent maintains an auxiliary variable ${d_{i}^{(k)} \in \mbb{R}^{n}}$ associated with its update direction $\Delta x_{i}^{(k)}$, which is computed from:
\begin{equation}
	\label{eq:dis_fused_newton_update_direction}
	d_{i}^{(k)} = \sum_{j \in \mcal{N}_{i}} w_{ij} \Delta x_{j}^{(k)}.
\end{equation}
Subsequently, agent $i$ updates its local estimate of the solution of the optimization problem with:
\begin{equation}
	\label{eq:local_x_update}
	x_{i}^{(k + 1)} = \sum_{j \in \mcal{N}_{i}} w_{ij} \left(x_{j}^{(k)} + \alpha_{j} d_{j}^{(k)} \right),
\end{equation}
using the information received from its neighbors.
Algorithm~\ref{alg:distributed_algorithm_cons} outlines the update procedures of our distributed algorithm \emph{EC-DQN} for equality-constrained optimization.

\begin{algorithm2e} [th]
    \label{alg:distributed_algorithm_cons}
    \caption{EC-DQN: A Distributed Quasi-Newton Algorithm for Equality-Constrained Optimization}

    \SetKwRepeat{doparallel}{do in parallel}{while}

    \textbf{Initialization:} \\
    {\addtolength{\leftskip}{1em}
        ${x_{i}^{(0)} \in \mbb{R}^{n}}$, ${v_{i}^{(0)} = \nabla f_{i}\left(x_{i}^{(0)}\right)}$, ${B_{i}^{(0)} \in \mbb{S}_{++}^{n}}$, and ${\alpha_{i}^{(0)} \in \mbb{R}_{++}}$,~${\forall i \in \mcal{V}}$.
    }

    \doparallel( $\forall i \in \mcal{V}$:){not converged or stopping criterion is not met}{
        $\Delta x_{i}^{(k)} \leftarrow $ Procedure \eqref{eq:dis_newton_update_direction} \algsep
        $d_{i}^{(k)} \leftarrow $ Procedure \eqref{eq:dis_fused_newton_update_direction} \algsep
        $x_{i}^{(k + 1)} \leftarrow $ Procedure \eqref{eq:local_x_update} \algsep
        $v_{i}^{(k + 1)} \leftarrow $ Procedure \eqref{eq:dynamic_avg_consensus_grad} \algsep
        \emph{Compute $B_{i}^{(k + 1)}$.} \algsep
        $k \leftarrow k + 1$
    }

\end{algorithm2e}

We assume that that EC-DQN is initialized with a positive-definite estimate of the Hessian of the local objective function of each agent, i.e., ${B_{i}^{(0)} \in \mbb{S}_{++}^{n}}$,~${\forall i \in \mcal{V}}$. Further, we assume that each agent maintains a positive-definite approximation of the Hessian of its local objective function at each iteration, which may require modification of the quasi-Newton updates to preserve positive-definiteness of the estimates.

\begin{remark}
	In defining the update procedure of the local copy of $x$ maintained by each agent, we utilized the update direction computed by taking the weighted sum of each agent's corresponding solution of its local KKT system and those of its neighbors, denoted by $d_{i}^{(k)}$. We note that this additional procedure \eqref{eq:dis_fused_newton_update_direction} is not entirely necessary. In essence, agent $i$ could perform the update in \eqref{eq:local_x_update} using its local estimate $\Delta x_{i}^{(k)}$ in place of $d_{i}^{(k)}$. We observe empirically that introducing \eqref{eq:dis_fused_newton_update_direction} into EC-DQN does not provide significant improvements in its performance, except in sparsely-connected communication networks, where this additional communication step aids in enhancing the diffusion of information through the network of agents. However, we highlight that the improved performance in these situations comes at the expense of a greater communication overhead.
\end{remark}

\begin{theorem}
	The local iterates of all agents converge to the mean of the corresponding iterate. Specifically, the local iterate $x_{i}$ of agent converges to a limit point $x^{(\infty)}$ which satisfies the first-order optimality conditions,~${\forall i \in \mcal{V}}$.
\end{theorem}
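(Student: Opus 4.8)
The plan is to mirror the two-track analysis developed for DQN in Section~\ref{sec:distributed_alg_uncons}: first establish \emph{consensus}, i.e., that the disagreement errors of the stacked iterates $\bm{x}^{(k)}$, $\bm{v}^{(k)}$, and of the stacked directions vanish; then establish \emph{convergence of the mean iterate} $\mean{\bm{x}}^{(k)}$ to a limit $x^{(\infty)}$; and finally read off the first-order optimality conditions \eqref{eq:KKT_stationarity}--\eqref{eq:KKT_primal_feasibility} from the fixed-point equations of \eqref{eq:dis_newton_update_direction}--\eqref{eq:dynamic_avg_consensus_grad}. In stacked form these updates read ${\bm{x}^{(k+1)} = W\bp{\bm{x}^{(k)} + \bm{\alpha}^{(k)}\bm{d}^{(k)}}}$ and ${\bm{d}^{(k)} = W\del{\bm{x}^{(k)}}}$, together with the gradient-tracking recursion \eqref{eq:dynamic_avg_consensus_grad}, where $\del{x_i^{(k)}}$ is the $x$-block of the solution of the per-agent KKT system \eqref{eq:dis_newton_update_direction}. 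As in the unconstrained case, the initialization ${v_i^{(0)} = \nabla f_i(x_i^{(0)})}$ makes the gradient-tracking identity ${\mean{\bm{v}}^{(k)} = \mean{\bm{g}}^{(k)}}$ hold for all $k$.

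\emph{Consensus.} Projecting the stacked $\bm{x}$-update onto the disagreement subspace and invoking Assumption~\ref{assm:mixing_matrix} (the operator $M$ has spectral norm $\lambda < 1$), the consensus error contracts: ${\norm{\tilde{\bm{x}}^{(k+1)}}_2 \le \lambda\norm{\tilde{\bm{x}}^{(k)}}_2 + \lambda\norm{\bm{\alpha}^{(k)}}_2\norm{\bm{d}^{(k)}}_2}$, with analogous inequalities for $\tilde{\bm{v}}^{(k)}$ and the direction error. The ingredient beyond Lemma~\ref{lem:convergence_of_the_mean_sequence} is a uniform bound ${\norm{\del{x_i^{(k)}}}_2 \le \kappa\bp{\norm{v_i^{(k)}}_2 + \norm{Ax_i^{(k)} - b}_2}}$: under the maintained positive-definiteness with eigenvalues of $B_i^{(k)}$ bounded below by $\frac{1}{\gamma}$ (and bounded above, after projection onto $\mbb{S}_{++}^n$), and with $A$ of full row rank, the KKT matrix in \eqref{eq:dis_newton_update_direction} is invertible with a uniformly bounded inverse. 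Coerciveness and convexity of $f$ (Assumption~\ref{assm:smooth_objective}) confine the iterates to a bounded sublevel set, so $\norm{v_i^{(k)}}_2$ and the primal residual $\norm{Ax_i^{(k)} - b}_2$ remain bounded; the same small-step-size contraction argument used in Lemma~\ref{lem:convergence_of_the_mean_sequence} and Theorem~\ref{thm:convergence_to_the_mean} then drives ${\norm{\tilde{\bm{x}}^{(k)}}_2}$, ${\norm{\tilde{\bm{v}}^{(k)}}_2}$, and the direction disagreement to zero.

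\emph{Mean convergence and optimality.} Averaging \eqref{eq:local_x_update} over all agents (doubly-stochastic $W$) gives ${\mean{\bm{x}}^{(k+1)} = \mean{\bm{x}}^{(k)} + \mean{\bm{\alpha}^{(k)}\bm{d}^{(k)}}}$, while averaging the second block of \eqref{eq:dis_newton_update_direction} gives ${A\mean{\del{\bm{x}^{(k)}}} = -\bp{A\mean{\bm{x}}^{(k)} - b}}$, so the primal residual of the mean iterate is governed by the mean step. A Lyapunov argument built on a merit function that couples the objective gap with the squared primal infeasibility (an augmented-Lagrangian-type merit function) shows that this merit function decreases along $\mean{\bm{x}}^{(k)}$ up to terms controlled by the vanishing consensus and gradient-tracking errors, hence ${\norm{\mean{\bm{x}}^{(k+1)} - \mean{\bm{x}}^{(k)}}_2 \to 0}$, ${\bm{d}^{(k)} \to 0}$, and---using ${\bm{d}^{(k)} = W\del{\bm{x}^{(k)}}}$ with $W$ nonsingular---${\del{\bm{x}^{(k)}} \to 0}$; moreover $\mean{\bm{x}}^{(k)}$ converges to a limit $x^{(\infty)}$, and with consensus ${x_i^{(k)} \to x^{(\infty)}}$ for all ${i \in \mcal{V}}$. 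At this limit, ${\mean{\del{\bm{x}^{(\infty)}}} = 0}$ forces ${Ax^{(\infty)} = b}$, which is \eqref{eq:KKT_primal_feasibility}; since ${\del{x_i^{(\infty)}} = 0}$, the first block of \eqref{eq:dis_newton_update_direction} reduces to ${v_i^{(\infty)} + A^\T\beta_i^{(\infty)} = 0}$, and since ${v_i^{(\infty)} = \frac{1}{N}\sum_{j \in \mcal{V}} \nabla f_j(x^{(\infty)})}$ (the fixed point of \eqref{eq:dynamic_avg_consensus_grad}, cf.\ Corollary~\ref{corr:mean_gradient_tracking}), rescaling ${\beta^{\star} = N\beta_i^{(\infty)}}$ recovers ${\sum_{j \in \mcal{V}} \nabla f_j(x^{(\infty)}) + A^\T\beta^{\star} = 0}$, which is \eqref{eq:KKT_stationarity}. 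As \eqref{eq:global_prob_cons} is convex, $x^{(\infty)}$ is a global minimizer.

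\emph{Main obstacle.} The crux is the mean-convergence step. Unlike DQN, the stacked direction $\bm{d}^{(k)}$ solves an \emph{indefinite} saddle-point system rather than a quasi-Newton descent direction for $f$, so the objective value need not decrease monotonically along $\mean{\bm{x}}^{(k)}$ and the Lemma~\ref{lem:convergence_of_the_mean_sequence} argument does not carry over verbatim; one must instead craft a merit function trading off optimality against feasibility and carefully bound the cross terms it accrues from $\tilde{\bm{x}}^{(k)}$, $\tilde{\bm{v}}^{(k)}$, and the step-size heterogeneity, which is also where a tighter step-size bound must be derived. Two secondary points require care: the uniform invertibility of the KKT matrix in \eqref{eq:dis_newton_update_direction}, which relies on $A$ having full row rank together with the two-sided eigenvalue bounds on $B_i^{(k)}$ enforced by the projection onto $\mbb{S}_{++}^n$; and promoting ${\bm{d}^{(k)} \to 0}$ to ${\del{x_i^{(k)}} \to 0}$ for each individual agent (needed to read off \eqref{eq:KKT_stationarity}), which uses the fixed-point structure of \eqref{eq:local_x_update}, consensus, and nonsingularity of $W$---a mild condition satisfied by common mixing-matrix constructions.
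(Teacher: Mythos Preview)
The paper does not give a full proof; it offers only a two-sentence sketch: first, consensus of $(x_i^{(k)},v_i^{(k)})$ to the mean, argued exactly as for DQN, and second, an appeal to the classical centralized convergence theory for quasi-Newton methods on equality-constrained problems \cite{fontecilla1987convergence,tapia1978quasi} to handle the mean dynamics. Your proposal agrees with the paper on the consensus half but departs on the second half: rather than invoking the centralized literature, you propose building an augmented-Lagrangian-type merit function from scratch and tracking its decrease along $\mean{\bm{x}}^{(k)}$. This is a legitimate alternative and, if carried out, would make the argument self-contained; the paper's route is shorter but outsources the hard step.

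Two points in your outline deserve caution. First, your consensus bound uses ${\norm{\del{x_i^{(k)}}}_2 \le \kappa\bigl(\norm{v_i^{(k)}}_2 + \norm{Ax_i^{(k)}-b}_2\bigr)}$ and then asserts that coerciveness of $f$ keeps $\norm{v_i^{(k)}}_2$ and $\norm{Ax_i^{(k)}-b}_2$ bounded. That is circular as written: coerciveness only gives boundedness of the iterates once you know the objective (or a merit function) stays bounded, which is precisely what the mean-convergence step is supposed to deliver. In the DQN analysis (Appendix~A) this circularity is avoided because the disagreement recursions are expressed in terms of $\norm{\tilde{\bm{v}}^{(k)}}_2$ and $\norm{\mean{\bm{g}}^{(k)}}_2$, not the raw $\norm{v_i^{(k)}}_2$, and all three sequences $X^{(k)},V^{(k)},R^{(k)}$ are bounded \emph{jointly} via the coupled inequalities \eqref{eq:X_bound}--\eqref{eq:V_bound} and the descent inequality. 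You will need an analogous coupled treatment here, with the primal residual playing the role of an additional tracked quantity; you cannot bound the perturbation terms a~priori and then run the descent argument separately. Second, your passage from $\bm{d}^{(k)}\to 0$ to $\del{x_i^{(k)}}\to 0$ via nonsingularity of $W$ is an assumption the paper does not make (Assumption~\ref{assm:mixing_matrix} allows $W$ to be singular); a cleaner route is to use consensus of $\del{x_i^{(k)}}$ directly, since once $x_i^{(k)}$, $v_i^{(k)}$, and $B_i^{(k)}$ agree across agents the KKT solutions $\del{x_i^{(k)}}$ agree as well, and then $\bm{d}^{(k)}\to 0$ with $W\bm{1}_N=\bm{1}_N$ forces the common value to zero. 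Your identification of the merit-function construction as the crux is accurate; that is exactly the content the paper delegates to \cite{fontecilla1987convergence,tapia1978quasi}.
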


We omit the proof here. The proof follows from 
convergence of the iterates ${(x_{i}^{(k)}, v_{i}^{(k)})}$ to the mean computed across all agents for sufficiently large $k$. In addition, convergence of quasi-Newton methods for equality-constrained optimization follows from \cite{fontecilla1987convergence, tapia1978quasi}.
	\section{Numerical Evaluations of DQN}
\label{sec:simulations_uncons}
We assess the performance of our distributed quasi-Newton method DQN on separable unconstrained optimization problems, comparing it to other notable distributed optimization methods, including the \emph{first-order} distributed algorithms DIGing-ATC \cite{nedic2017achieving}, C-ADMM \cite{mateos2010distributed}, and $ABm$ \cite{xin2019distributed} and the \emph{second-order} distributed algorithms \mbox{ESOM-$K$} \cite{mokhtari2016decentralized} and the distributed Newton's method D-Newton \cite{liu2023communication}. We note that the first-order methods do not require information on the curvature of the objective function (i.e., the Hessian of the objective function or its inverse), unlike the second-order methods. Further, we compare our method to the existing distributed quasi-Newton methods D-BFGS \cite{eisen2017decentralized} and \mbox{PD-QN} \cite{eisen2019primal}. In these evaluations, D-BFGS and \mbox{PD-QN} methods exhibit much slower convergence rates compared to DQN, requiring greater computation times along with greater communication costs. We present the detailed results of these evaluations in
Appendix~D.
We do not include the D-BFGS and \mbox{PD-QN} methods in this section, since in addition to not being competitive, these methods do not scale efficiently to larger problems.

We examine the convergence rate of each method in distributed quadratic programming problems and in logistic regression problems, across a range of communication networks with varying degrees of connectedness, characterized by the connectivity ratio ${\kappa = \frac{2 \vert \mcal{E} \vert}{N(N - 1)}}$, which represents the fraction of edges in the associated communication graph relative to the fully-connected case. We utilize the \emph{golden-section} method to select the optimal step-size for each method (and optimal penalty parameter in the case of C-ADMM). Further, we utilize \emph{Metropolis-Hastings} Weights in all the methods, with the exception of C-ADMM, which does not require a mixing matrix. We utilize doubly-stochastic weights in the $ABm$ method, and refer to this method as the $ABm$ or \mbox{${ABm}$-DS} method, interchangeably. In DQN, we utilize the DFP update scheme. We measure the convergence error using the \emph{relative-squared error} metric (RSE), associated with the optimal solution ${x^{\star} \in \mbb{R}^{n}}$, which is given by:
\begin{equation}
    \label{eq:rse_metric}
    \mathrm{RSE}(x_{i}) = \frac{\norm{x_{i} - x^{\star}}_{2}}{\norm{x^{\star}}_{2}},
\end{equation}
where ${x_{i} \in \mbb{R}^{n}}$ denotes the local iterate of agent $i$. We implement the algorithms on an AMD Ryzen $9$ $5900$X computer with $31$GB of RAM, measuring the computation time required by each algorithm to converge to the specified tolerance. We assume that the local copy of the problem variable maintained by each agent is represented using double-precision floating-point representation format. 
We examine the performance of each algorithm in logistic regression problems, discussed here, and quadratic programming problems, presented in the Supplementary Material.

\subsection{Logistic Regression}
\label{sec:sim_log_reg}
We examine the performance of each method in logistic regression problems, which arise in a variety of domains. We consider the logistic regression problem:
\begin{equation}
	\label{eq:dis_log_reg}
	\minimize{x \in \mathbb{R}^{n}} \frac{\xi}{2} \norm{x}_{2}^{2} + \sum_{i = 1}^{N } \sum_{j = 1}^{m_{i}} \ln \left(1 + \exp\left(-(a_{ij}^{\T}x)b_{ij} \right) \right),
\end{equation}
where ${\xi \in \mbb{R}_{+}}$ denotes the weight of the regularization term (added to prevent overfitting), and $m_{i}$ denotes the number of training samples ${\{(a_{ij}, b_{ij})\}_{j = 1}^{m_{i}}}$ available to agent $i$, with ${a_{ij} \in \mbb{R}^{n}}$ and ${b_{ij} \in \{-1, 1\}}$ representing the binary labels. We note that each agent has access to only its local training samples. We consider a problem with ${N = 50}$ agents, ${n = 40}$, and ${\xi = 1\mathrm{e}^{-2}}$. We randomly generate the training samples, with $m_{i}$ sampled from the uniform distribution over the interval ${[5, 30)}$.

We examine the performance of each algorithm on a randomly-generated connected communication graph with ${\kappa = 0.569}$. We initialize D-Newton \emph{Rank-$K$} with ${\theta = 0.03}$, ${M = 1}$, and ${K = 25}$, with the number of communication rounds for the multi-step consensus procedure set at $15$. Likewise, we initialize \mbox{ESOM-$K$} with ${\epsilon = 1}$ and ${K = 15}$. In DQN, we set the initial estimate of the inverse Hessian of each agent's objective function to $1\mathrm{e}^{-2} I_{n}$. We note that a closed-form solution does not exist for C-ADMM, limiting the competitiveness of C-ADMM. Consequently, we do not include C-ADMM in this evaluation. We provide the cumulative computation time and cumulative size of messages exchanged per agent in Table~\ref{tab:log_reg_all_alg}. While the first-order methods \mbox{$ABm$-DS} and DIGing-ATC fail to converge in this problem, DQN as well as the second-order methods D-Newton \emph{Rank-$K$} and ESOM converge to the optimal solution. Further, DQN attains the fastest computation time, while incurring the minimum communication cost. Despite using a generic guess as the initial estimate of the inverse Hessian in DQN, DQN still converges in about the same number of iterations as the second-order methods, as depicted in Figure~\ref{fig:log_reg_all_alg}. The first-order methods exhibit notably slower convergence rate in terms of the number of iterations compared to the other algorithms.

\begin{table}[th]
	\centering
	\caption{The cumulative computation time (in seconds), the cumulative size of messages exchanged per agent (in Megabytes), and the convergence status in the logistic regression problem, with ${\kappa = 0.569}$.}
	\label{tab:log_reg_all_alg}
	\begin{adjustbox}{width=\linewidth}
		{\begin{tabular}{l c c c}
				\toprule
				Algorithm & Computation Time (secs.) & Messages Exchanged (MB) & Converged \\
				\midrule
				$ABm$-DS \cite{xin2019distributed} & --- & --- & \xmark \\
				DIGing-ATC \cite{nedic2017achieving} & --- & --- & \xmark \\
				D-Newton \emph{Rank-$K$} \cite{liu2023communication} & $5.215 \mathrm{e}^{-1}$ & $7.392 \mathrm{e}^{0}$ & \cmark \\
				ESOM \cite{mokhtari2016decentralized} & $3.184 \mathrm{e}^{-2}$ & $9.683 \mathrm{e}^{-1}$ & \cmark \\
				DQN (ours) & $\bm{1.246 \mathrm{e}^{-2}}$ & $\bm{1.478 \mathrm{e}^{-1}}$ & \cmark \\
				\bottomrule
		\end{tabular}}
	\end{adjustbox}
\end{table}

\begin{figure}[th]
	\centering
	\includegraphics[width=\linewidth]{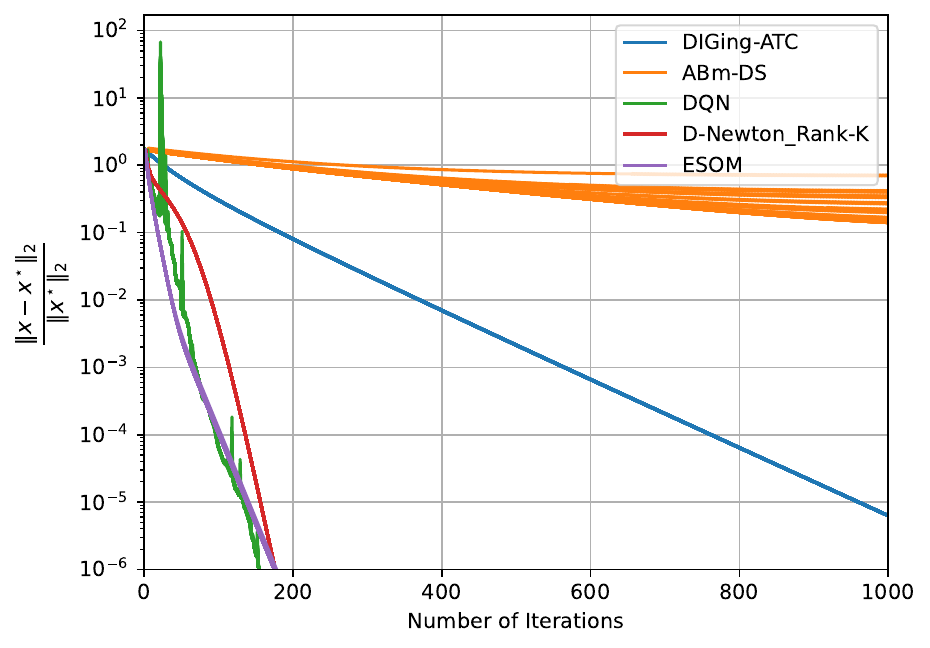}
	\caption{Per-iteration convergence error of each agent in the distributed logistic regression problem on a randomly-generated connected communication graph, with ${\kappa = 0.569}$. While the first-order methods DIGing-ATC and $ABm$-DS exhibit slow convergence, DQN and the second-order methods D-Newton \emph{Rank-$K$} and ESOM converge faster.}
	\label{fig:log_reg_all_alg}
\end{figure}

In addition, we assess the performance across a range of communication networks, over $20$ logistic regression problems in Table~\ref{tab:log_reg_sweep_success}. We initialize each algorithm with the same parameters, retaining ESOM while dropping D-Newton \emph{Rank-$K$} in the evaluations.  While ESOM achieves the fastest computation time when ${\kappa = 0.21}$, DQN achieves the fastest computation time for all other values of $\kappa$, compared to the other algorithms. Moreover, DQN and ESOM attain a perfect success rate, unlike the first-order methods. 
DQN incurs the least communication cost for convergence, even when each algorithm has an unlimited number of iterations available for convergence. Compared to ESOM, DQN incurs about an order of magnitude lower communication cost for all values of $\kappa$, with the exception of ${\kappa = 0.21}$ where DQN reduces the communication cost by a factor of about four, relative to ESOM.

\begin{table*}[t]
	\centering
	\caption{The mean and standard deviation of the cumulative computation time (Comp. Time), in seconds (secs.), and the cumulative size of messages exchanged per agent (Comm. Cost), in Megabytes (MB), in $20$ {logistic regression} problems, across a range of communication networks, in addition to the success rate of each algorithm in converging to the optimal solution. Entries associated with a non-zero success rate below $100\%$ are displayed in red.}
	\label{tab:log_reg_sweep_success}
	\begin{adjustbox}{width=\linewidth}
		{\begin{tabular}{l c c c c c c}
				\toprule
				\multirow{2}{*}{\textbf{Algorithm}} & \multicolumn{3}{c}{$\bm{\kappa} = \bm{0.21}$} & \multicolumn{3}{c}{$\bm{\kappa} = \bm{0.57}$} \\
				\cmidrule(lr){2-4} \cmidrule(lr){5-7}
				 & Comp. Time (secs.) & Comm. Cost (MB) & Success Rate ($\%$)
				 & Comp. Time (secs.) & Comm. Cost (MB) & Success Rate ($\%$) \\
				\midrule
				$ABm$-DS \cite{xin2019distributed} & --- & --- & $0$
				& --- & --- & $0$ \\
				DIGing-ATC \cite{nedic2017achieving} & {\color{red} $3.25\mathrm{e}^{-2} \pm 8.56\mathrm{e}^{-3}$} & {\color{red} $5.62\mathrm{e}^{-1} \pm 7.55\mathrm{e}^{-2}$} & {\color{red} $40$}
				& {\color{red} $2.89\mathrm{e}^{-2}  \pm 6.19\mathrm{e}^{-3}$} & {\color{red} $5.02\mathrm{e}^{-1} \pm 8.24\mathrm{e}^{-2}$} & {\color{red} $85$} \\
				ESOM \cite{mokhtari2016decentralized} & $\bm{2.60\mathrm{e}^{-2}  \pm 1.69\mathrm{e}^{-2}}$ & $1.17\mathrm{e}^{0} \pm 8.09\mathrm{e}^{-1}$ & $100$
				& $2.31\mathrm{e}^{-2}  \pm 1.44\mathrm{e}^{-2}$ & $1.16\mathrm{e}^{0} \pm 8.59\mathrm{e}^{-1}$ & $100$ \\
				DQN (ours) & $3.91\mathrm{e}^{-2}  \pm 1.40\mathrm{e}^{-2}$ & $\bm{2.72\mathrm{e}^{-1} \pm 1.28\mathrm{e}^{-1}}$ & $100$
				& $\bm{1.84\mathrm{e}^{-2}  \pm 5.19\mathrm{e}^{-3}}$ & $\bm{1.18\mathrm{e}^{-1} \pm 3.85\mathrm{e}^{-2}}$ & $100$ \\
				\bottomrule
		\end{tabular}}
	\end{adjustbox}

	\bigskip

	\begin{adjustbox} {width=\linewidth}
		{\begin{tabular}{l c c c c c c}
				\toprule
				\multirow{2}{*}{\textbf{Algorithm}} & \multicolumn{3}{c}{$\bm{\kappa} = \bm{0.74}$} & \multicolumn{3}{c}{$\bm{\kappa} = \bm{0.85}$} \\
				\cmidrule(lr){2-4} \cmidrule(lr){5-7}
		 & Comp. Time (secs.) & Comm. Cost (MB) & Success Rate ($\%$)
		 & Comp. Time (secs.) & Comm. Cost (MB) & Success Rate ($\%$) \\
				\midrule
				$ABm$-DS \cite{xin2019distributed} & --- & --- & $0$
				& --- & --- & $0$ \\
				DIGing-ATC \cite{nedic2017achieving} & {\color{red} $2.68\mathrm{e}^{-2}  \pm 8.49\mathrm{e}^{-3}$} & {\color{red} $4.99\mathrm{e}^{-1} \pm 1.10\mathrm{e}^{-1}$} & {\color{red} $95$}
				& {\color{red} $2.48\mathrm{e}^{-2}  \pm 7.31\mathrm{e}^{-3}$} & {\color{red} $4.64\mathrm{e}^{-1} \pm 1.18\mathrm{e}^{-1}$} & {\color{red} $85$} \\
				ESOM \cite{mokhtari2016decentralized}& $2.42\mathrm{e}^{-2}  \pm 1.42\mathrm{e}^{-2}$ & $1.16\mathrm{e}^{0} \pm 7.78\mathrm{e}^{-1}$ & $100$
				& $2.33\mathrm{e}^{-2}  \pm 1.38\mathrm{e}^{-2}$ & $1.15\mathrm{e}^{0} \pm 7.62\mathrm{e}^{-1}$ & $100$ \\
				DQN (ours) & $\bm{1.65\mathrm{e}^{-2}  \pm 4.85\mathrm{e}^{-3}}$ & $\bm{1.06\mathrm{e}^{-1} \pm 1.60\mathrm{e}^{-2}}$ & $100$
				& $\bm{1.63\mathrm{e}^{-2}  \pm 6.13\mathrm{e}^{-3}}$ & $\bm{1.04\mathrm{e}^{-1} \pm 2.35\mathrm{e}^{-2}}$ & $100$ \\
				\bottomrule
		\end{tabular}}
	\end{adjustbox}
\end{table*}

	\section{Numerical Evaluations of EC-DQN}
\label{sec:simulations_cons}
We evaluate the performance of EC-DQN, in comparison to existing distributed algorithms for multi-agent constrained optimization, including consensus ADMM (C-ADMM) \cite{mateos2010distributed}, SONATA \cite{sun2022distributed}, and DPDA \cite{aybat2016primal}. In our evaluations, we consider equality-constrained \emph{basis pursuit denoising} and \emph{logistic regression problems} across communication networks with different \emph{connectivity ratios} ${\kappa}$. 
We utilize the same evaluation setup as in Section~\ref{sec:simulations_uncons}.
The update procedures in C-ADMM and SONATA require each agent to solve an optimization problem, adversely affecting the computation time of each agent if these problems are solved exactly. To ensure that these methods remain competitive to other algorithms with respect to the computation time, we solve the pertinent optimization problems inexactly using sequential quadratic programming techniques. Empirical results indicate that the inexact updates did not have a material effect on the convergence rates of these algorithms.
We provide additional results in the Supplementary Material.

\subsection{Basis Pursuit Denoising}
We consider the following optimization problem:
\begin{equation}
	\label{eq:basis_pursuit}
	\begin{aligned}
		\minimize{x \in \mbb{R}^{n}} &\sum_{i = 1}^{N} \frac{1}{2} \norm{A_{i}x - b_{i}}_{2}^{2} + \xi \norm{x}_{1} \\
		\subj &Fx = e,
	\end{aligned}
\end{equation}
where ${A_{i} \in \mbb{R}^{p_{i} \times n}}$, ${b_{i} \in \mbb{R}^{p_{i}}}$ represent the local data only available to agent $i$. Moreover, each agent has access to ${(F \in \mbb{R}^{m \times n}, e \in \mbb{R}^{m})}$ defining the constraints in the optimization problem. The parameter $\xi$ determines the relative weighting between the reconstruction quality associated with the $\ell_{2}^{2}$-norm and sparsity of the solution associated with the \mbox{$\ell_{1}$-norm}. Problems of this form arise in a variety of applications such as image compression, compressed sensing, and least absolute shrinkage and selection operator (LASSO) regression. Similar problems also arise in \emph{sparse dictionary learning}. We note that the basis-pursuit problem is non-smooth, and as such, the assumptions on smoothness made in this paper do not apply. Nonetheless, we demonstrate that our algorithm is effective in subdifferentiable problems, even though these problems might not be smooth. We consider a multi-agent setting with ${N = 50}$ agents, ${n = 40}$, and ${\xi = 1\mathrm{e}^{-2}}$. We randomly generate the problem data available to each agent, including the number of constraints, such that the rank of ${A_{i} < n}$,~${\forall i \in \mcal{V}}$. We execute each algorithm for a maximum of $1000$ iterations, with a convergence threshold of $1\mathrm{e}^{-8}$. We initialize each agent's estimate of the Hessian of the problem to a randomly-generated positive-definite symmetric matrix in EC-DQN.

We 
examine the performance of each algorithm in \emph{poorly-conditioned} basis pursuit denoising problems, with the condition number ranging between $53.895$ and $125.029$, on randomly-generated communication networks with different connectivity ratios. Table~\ref{tab:poorly_conditioned_basis_run} summarizes the performance of each algorithm in these problems, highlighting the cumulative computation time and communication cost on $20$ instances of such problems. Similar to the results obtained in the well-conditioned problem setting, when ${\kappa = 0.22}$, C-ADMM attains the least communication cost, with SONATA achieving the fastest computation time for convergence. Although at ${\kappa = 0.22}$, we note that all algorithms achieve similar performance in terms of computation time and communication cost. For all other values of $\kappa$, EC-DQN converges the fastest, while requiring the least communication overhead. Further, the performance of DPDA degrades notably, as illustrated in Figure~\ref{fig:poorly_conditioned_basis_run}. In Figure~\ref{fig:poorly_conditioned_basis_run}, we show the performance of each algorithm on an instance of the problems with ${\kappa = 0.566}$ and a condition number of $99.698$. In this problem, EC-DQN and C-ADMM incur the least communication cost, requiring $5.19\mathrm{e}^{-2}$~MB for convergence. However, EC-DQN attains the fastest computation time of $1.43\mathrm{e}^{-2}$~secs., compared to the second-fastest method SONATA, which requires $2.33\mathrm{e}^{-2}$~secs. and a communication cost of $9.28\mathrm{e}^{-2}$~MB. C-ADMM requires $2.51\mathrm{e}^{-2}$ secs. for convergence. Further, we note that \mbox{C-ADMM}, SONATA, and EC-DQN converged on all problem instances, in the well-conditioned and poorly-conditioned settings.

\begin{table*}[t]
	\centering
	\caption{The mean and standard deviation of the cumulative computation time (Comp. Time), in seconds (secs.), and communication cost (Comm. Cost), in Megabytes (MB), per agent, in $20$ poorly-conditioned basis pursuit denoising problems, on different communication networks.}
	\label{tab:poorly_conditioned_basis_run}
		{\begin{tabular}{l c c c c}
				\toprule
				\multirow{2}{*}{\textbf{Algorithm}} & \multicolumn{2}{c}{$\bm{\kappa} = \bm{0.22}$} & \multicolumn{2}{c}{$\bm{\kappa} = \bm{0.57}$} \\
				\cmidrule(lr){2-3} \cmidrule(lr){4-5}
				 & Comp. Time (secs.) & Comm. Cost (MB)
				 & Comp. Time (secs.) & Comm. Cost (MB) \\
				\midrule
				C-ADMM \cite{mateos2010distributed} & $4.67\mathrm{e}^{-2}  \pm 1.28\mathrm{e}^{-2}$ & $\bm{1.35\mathrm{e}^{-1}  \pm 3.09\mathrm{e}^{-2}}$
				& $4.33\mathrm{e}^{-2}  \pm 1.53\mathrm{e}^{-2}$ & $1.27\mathrm{e}^{-1} \pm 4.05\mathrm{e}^{-2}$ \\
				DPDA \cite{aybat2016primal} & --- & ---
				& --- & --- \\
				SONATA \cite{sun2022distributed} & $\bm{2.85\mathrm{e}^{-2}  \pm 1.09\mathrm{e}^{-2}}$ & $1.76\mathrm{e}^{-1} \pm 7.01\mathrm{e}^{-2}$
				& $1.73\mathrm{e}^{-2}  \pm 7.70\mathrm{e}^{-3}$ & $1.34\mathrm{e}^{-1} \pm 7.50\mathrm{e}^{-2}$ \\
				EC-DQN (ours) & $2.88\mathrm{e}^{-2}  \pm 8.44\mathrm{e}^{-3}$ & $1.89\mathrm{e}^{-1} \pm 3.65\mathrm{e}^{-2}$
				& $\bm{9.99\mathrm{e}^{-3}  \pm 2.21\mathrm{e}^{-3}}$ & $\bm{6.92\mathrm{e}^{-2} \pm 2.26\mathrm{e}^{-2}}$ \\
				\bottomrule
		\end{tabular}}

	\bigskip

		{\begin{tabular}{l c c c c}
				\toprule
				\multirow{2}{*}{\textbf{Algorithm}} & \multicolumn{2}{c}{$\bm{\kappa} = \bm{0.75}$} & \multicolumn{2}{c}{$\bm{\kappa} = \bm{0.86}$} \\
				\cmidrule(lr){2-3} \cmidrule(lr){4-5}
		 & Comp. Time (secs.) & Comm. Cost (MB)
		 & Comp. Time (secs.) & Comm. Cost (MB) \\
				\midrule
				C-ADMM \cite{mateos2010distributed} & $4.03\mathrm{e}^{-2}  \pm 1.56\mathrm{e}^{-2}$ & $1.21\mathrm{e}^{-1}  \pm 4.26\mathrm{e}^{-2}$
				& $4.24\mathrm{e}^{-2}  \pm 1.82\mathrm{e}^{-2}$ & $1.22\mathrm{e}^{-1} \pm4.56\mathrm{e}^{-2}$ \\
				DPDA \cite{aybat2016primal} & --- & ---
				& --- & --- \\
				SONATA \cite{sun2022distributed} & $1.44\mathrm{e}^{-2}  \pm 5.35\mathrm{e}^{-3}$ & $1.11\mathrm{e}^{-1} \pm 6.60\mathrm{e}^{-2}$
				& $1.36\mathrm{e}^{-2}  \pm 6.95\mathrm{e}^{-3}$ & $1.16\mathrm{e}^{-1} \pm 7.94\mathrm{e}^{-2}$ \\
				EC-DQN (ours) & $\bm{7.36\mathrm{e}^{-3}  \pm 1.67\mathrm{e}^{-3}}$ & $\bm{5.25\mathrm{e}^{-2} \pm 1.83\mathrm{e}^{-2}}$
				& $\bm{7.51\mathrm{e}^{-3}  \pm 3.14\mathrm{e}^{-3}}$ & $\bm{5.32\mathrm{e}^{-2} \pm 1.86\mathrm{e}^{-2}}$ \\
				\bottomrule
		\end{tabular}}
\end{table*}

\begin{figure}[th]
	\centering
	\includegraphics[width=\linewidth]{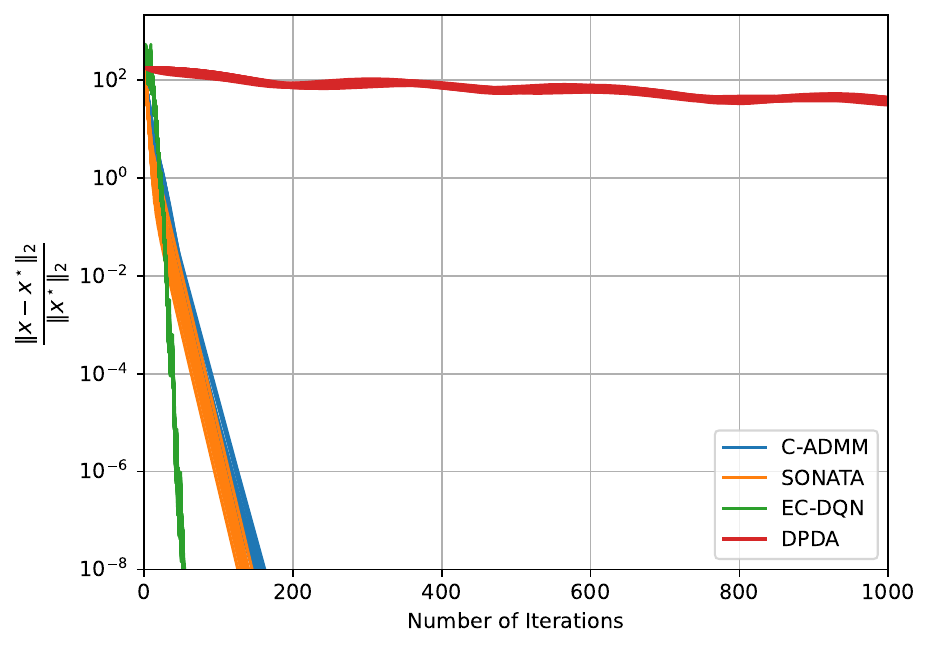}
	\caption{Convergence error of each algorithm in a poorly-conditioned basis pursuit denoising problem, on a randomly-generated communication graph with ${\kappa = 0.566}$. EC-DQN achieves the fastest convergence rate compared to the other algorithms.}
	\label{fig:poorly_conditioned_basis_run}
\end{figure}

\subsection{Logistic Regression}
In addition, we evaluate each algorithm in constrained logistic regression problems, expressed in the form:
\begin{equation}
	\label{eq:eval_log_reg}
	\begin{aligned}
		 \minimize{x \in \mathbb{R}^{n}} &\frac{\xi}{2} \norm{x}_{2}^{2} + \sum_{i = 1}^{N } \sum_{j = 1}^{m_{i}} \ln \left(1 + \exp\left(-(a_{ij}^{\T}x)b_{ij} \right) \right),\\
		\subj &Fx = e,
	\end{aligned}
\end{equation}
where ${\{(a_{ij}, b_{ij})\}_{j = 1}^{m_{i}}}$ denotes the training data available to agent $i$ and $m_{i}$ denotes the number of training data samples, with regularization parameter ${\xi \in \mbb{R}_{+}}$, ${F \in \mbb{R}^{m \times n}}$, and ${e \in \mbb{R}^{m}}$. We consider the problem in \eqref{eq:eval_log_reg} with ${N = 50}$ agents and ${n = 40}$. Further, we randomly generate the problem data, the communication networks, and each agent's estimate of the Hessian of the problem. We utilize a convergence threshold of $1\mathrm{e}^{-7}$, with  ${\xi = 1\mathrm{e}^{-2}}$ and the maximum number of iterations set at $1000$.

In Table~\ref{tab:eval_logistic_regression_stats}, we summarize the performance of each algorithm on $20$ instances of the logistic regression problem, noting the total computation time and communication cost per agent required by each algorithm for convergence on different communication graphs. As presented in Table~\ref{tab:eval_logistic_regression_stats}, EC-DQN achieves the fastest computation time and least communication cost across all communication networks, except at ${\kappa = 0.22}$ where C-ADMM attains the minimum communication cost. We note that, on each communication graph, C-ADMM failed to converge in one problem, whereas EC-DQN and SONATA converged in all cases. When an algorithm fails to converge, we do not utilize the computation time and communication cost for that run in computing the summary statistics for that algorithm. Figure~\ref{fig:eval_logistic_regression_stats} depicts the convergence rate of each algorithm per iteration on an instance of \eqref{eq:eval_log_reg} with ${\kappa = 0.566}$. We note that EC-DQN converges within the fewest number of iterations. Moreover, EC-DQN requires the least communication cost of ${2.50\mathrm{e}^{-2}}$~MB with the fastest computation time ($8.00\mathrm{e}^{-3}$~secs.) for convergence. In this problem, SONATA requires a computation time and communication cost of $2.49\mathrm{e}^{-2}$~secs. and $6.15\mathrm{e}^{-2}$~MB, respectively, outperforming C-ADMM, which requires a computation time and communication cost of $4.49\mathrm{e}^{-2}$~secs. and $6.56\mathrm{e}^{-2}$~MB, respectively. DPDA fails to converge within $1000$ iterations.

\begin{table*}[t]
	\centering
	\caption{The mean and standard deviation of the cumulative computation time (Comp. Time), in seconds (secs.), and communication cost (Comm. Cost), in Megabytes (MB), per agent, in $20$ constrained logistic regression problems, on different communication networks.}
	\label{tab:eval_logistic_regression_stats}
		{\begin{tabular}{l c c c c}
				\toprule
				\multirow{2}{*}{\textbf{Algorithm}} & \multicolumn{2}{c}{$\bm{\kappa} = \bm{0.22}$} & \multicolumn{2}{c}{$\bm{\kappa} = \bm{0.57}$} \\
				\cmidrule(lr){2-3} \cmidrule(lr){4-5}
				 & Comp. Time (secs.) & Comm. Cost (MB)
				 & Comp. Time (secs.) & Comm. Cost (MB) \\
				\midrule
				C-ADMM \cite{mateos2010distributed} & $4.05\mathrm{e}^{-2}  \pm 2.74\mathrm{e}^{-2}$ & $\bm{7.86\mathrm{e}^{-2}  \pm 4.31\mathrm{e}^{-2}}$
				& $3.95\mathrm{e}^{-2}  \pm 2.06\mathrm{e}^{-2}$ & $7.70\mathrm{e}^{-2} \pm 4.25\mathrm{e}^{-2}$ \\
				DPDA \cite{aybat2016primal} & --- & ---
				& --- & --- \\
				SONATA \cite{sun2022distributed} & $3.81\mathrm{e}^{-2}  \pm 1.64\mathrm{e}^{-2}$ & $1.80\mathrm{e}^{-1} \pm 1.17\mathrm{e}^{-1}$
				& $2.88\mathrm{e}^{-2}  \pm 1.22\mathrm{e}^{-2}$ & $1.39\mathrm{e}^{-1} \pm 8.82\mathrm{e}^{-2}$ \\
				EC-DQN (ours) & $\bm{2.41\mathrm{e}^{-2}  \pm 1.16\mathrm{e}^{-2}}$ & $1.43\mathrm{e}^{-1} \pm 4.43\mathrm{e}^{-2}$
				& $\bm{5.79\mathrm{e}^{-3}  \pm 3.89\mathrm{e}^{-3}}$ & $\bm{4.13\mathrm{e}^{-2} \pm 3.37\mathrm{e}^{-2}}$ \\
				\bottomrule
		\end{tabular}}

	\bigskip

		{\begin{tabular}{l c c c c}
				\toprule
				\multirow{2}{*}{\textbf{Algorithm}} & \multicolumn{2}{c}{$\bm{\kappa} = \bm{0.75}$} & \multicolumn{2}{c}{$\bm{\kappa} = \bm{0.86}$} \\
				\cmidrule(lr){2-3} \cmidrule(lr){4-5}
		 & Comp. Time (secs.) & Comm. Cost (MB)
		 & Comp. Time (secs.) & Comm. Cost (MB) \\
				\midrule
				C-ADMM \cite{mateos2010distributed} & $4.03\mathrm{e}^{-2}  \pm 2.19\mathrm{e}^{-2}$ & $7.95\mathrm{e}^{-2}  \pm 4.71\mathrm{e}^{-2}$
				& $4.51\mathrm{e}^{-2}  \pm 2.41\mathrm{e}^{-2}$ & $7.90\mathrm{e}^{-2} \pm 3.84\mathrm{e}^{-2}$ \\
				DPDA \cite{aybat2016primal} & --- & ---
				& --- & --- \\
				SONATA \cite{sun2022distributed} & $2.80\mathrm{e}^{-2}  \pm 1.44\mathrm{e}^{-2}$ & $1.37\mathrm{e}^{-1} \pm 1.01\mathrm{e}^{-1}$
				& $2.51\mathrm{e}^{-2}  \pm 1.24\mathrm{e}^{-2}$ & $1.23\mathrm{e}^{-1} \pm 7.85\mathrm{e}^{-2}$ \\
				EC-DQN (ours) & $\bm{5.57\mathrm{e}^{-3}  \pm 4.14\mathrm{e}^{-3}}$ & $\bm{3.66\mathrm{e}^{-2} \pm 3.58\mathrm{e}^{-2}}$
				& $\bm{4.95\mathrm{e}^{-3}  \pm 3.08\mathrm{e}^{-3}}$ & $\bm{2.89\mathrm{e}^{-2} \pm 2.12\mathrm{e}^{-2}}$ \\
				\bottomrule
		\end{tabular}}
\end{table*}

\begin{figure}[th]
	\centering
	\includegraphics[width=\linewidth]{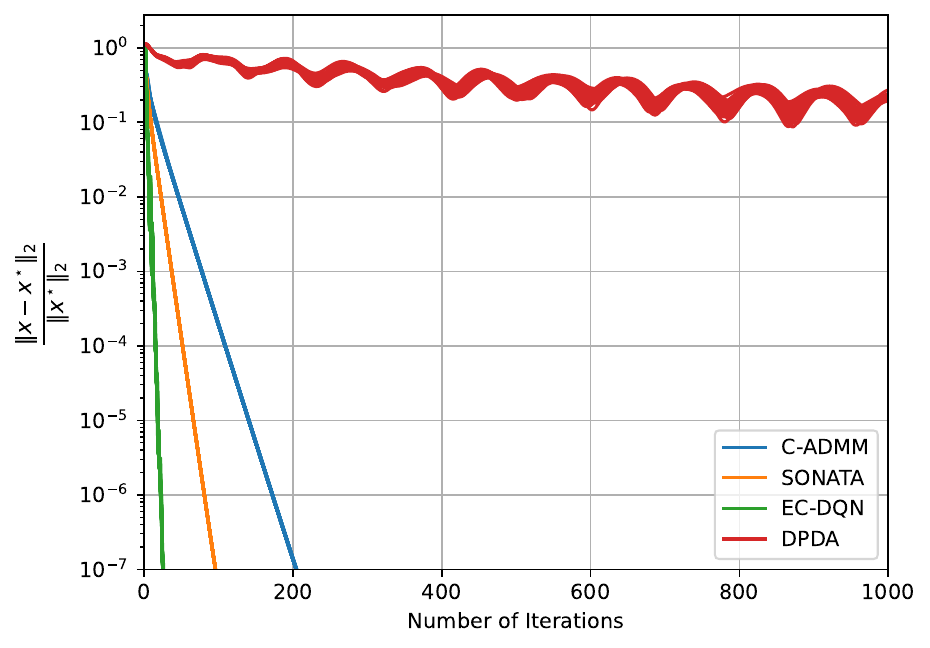}
	\caption{Convergence error of each algorithm in constrained logistic regression problems with ${\kappa = 0.566}$. EC-DQN converges the fastest in comparison to the other algorithms.}
	\label{fig:eval_logistic_regression_stats}
\end{figure}
	\section{Conclusion}
\label{sec:conclusion}
We introduce distributed quasi-Newton algorithms: DQN---for unconstrained optimization---and EC-DQN---for equality-constrained optimization---that enable a group of agents to solve multi-agent optimization problems using an estimate of the curvature of the objective function of the problem. In our algorithms, each agent leverages information on the curvature of the objective function using its local estimate of the aggregate gradient, while communicating with its immediate, one-hop neighbors over a communication network. Our algorithms converge to a stationary point of the optimization problem and provide faster empirical convergence in unconstrained and constrained problems, particularly in ill-conditioned problem settings, where distributed first-order methods show slow convergence rates. Moreover, DQN avoids the significant computation and communication overhead associated with existing distributed second-order and quasi-Newton methods by eschewing exact computation of the Hessian of the problem, in addition to estimating the inverse Hessian directly, making it amenable to problems where computation of the Hessian of the objective function is impractical or impossible. Future work will examine the extension of the proposed algorithms to convex inequality-constrained optimization problems, and more generally, non-convex constrained optimization problems, which arise in many applications. Further, future work will seek to design distributed algorithms for selecting an optimal step-size for convergence and provide additional theoretical analysis of the convergence rates of our algorithms.

	\bibliographystyle{style/IEEEtran}
	\bibliography{references}

\begin{thebibliography}{10}
\providecommand{\url}[1]{#1}
\csname url@samestyle\endcsname
\providecommand{\newblock}{\relax}
\providecommand{\bibinfo}[2]{#2}
\providecommand{\BIBentrySTDinterwordspacing}{\spaceskip=0pt\relax}
\providecommand{\BIBentryALTinterwordstretchfactor}{4}
\providecommand{\BIBentryALTinterwordspacing}{\spaceskip=\fontdimen2\font plus
\BIBentryALTinterwordstretchfactor\fontdimen3\font minus \fontdimen4\font\relax}
\providecommand{\BIBforeignlanguage}[2]{{%
\expandafter\ifx\csname l@#1\endcsname\relax
\typeout{** WARNING: IEEEtran.bst: No hyphenation pattern has been}%
\typeout{** loaded for the language `#1'. Using the pattern for}%
\typeout{** the default language instead.}%
\else
\language=\csname l@#1\endcsname
\fi
#2}}
\providecommand{\BIBdecl}{\relax}
\BIBdecl

\bibitem{bashir2021aerodynamic}
M.~Bashir, S.~Longtin-Martel, R.~M. Botez, and T.~Wong, ``Aerodynamic design optimization of a morphing leading edge and trailing edge airfoil--application on the uas-s45,'' \emph{Applied Sciences}, vol.~11, no.~4, p. 1664, 2021.

\bibitem{masdari2019optimization}
M.~Masdari, M.~Tahani, M.~H. Naderi, and N.~Babayan, ``Optimization of airfoil based savonius wind turbine using coupled discrete vortex method and salp swarm algorithm,'' \emph{Journal of Cleaner Production}, vol. 222, pp. 47--56, 2019.

\bibitem{harish2016reduced}
V.~Harish and A.~Kumar, ``Reduced order modeling and parameter identification of a building energy system model through an optimization routine,'' \emph{Applied Energy}, vol. 162, pp. 1010--1023, 2016.

\bibitem{ahmadianfar2021gradient}
I.~Ahmadianfar, W.~Gong, A.~A. Heidari, N.~A. Golilarz, A.~Samadi-Koucheksaraee, and H.~Chen, ``Gradient-based optimization with ranking mechanisms for parameter identification of photovoltaic systems,'' \emph{Energy Reports}, vol.~7, pp. 3979--3997, 2021.

\bibitem{long2021parameters}
W.~Long, T.~Wu, M.~Xu, M.~Tang, and S.~Cai, ``Parameters identification of photovoltaic models by using an enhanced adaptive butterfly optimization algorithm,'' \emph{Energy}, vol. 229, p. 120750, 2021.

\bibitem{toumieh2022decentralized}
C.~Toumieh and A.~Lambert, ``Decentralized multi-agent planning using model predictive control and time-aware safe corridors,'' \emph{IEEE Robotics and Automation Letters}, vol.~7, no.~4, pp. 11\,110--11\,117, 2022.

\bibitem{torreno2014fmap}
A.~Torreno, E.~Onaindia, and O.~Sapena, ``Fmap: Distributed cooperative multi-agent planning,'' \emph{Applied Intelligence}, vol.~41, pp. 606--626, 2014.

\bibitem{mishra2019multi}
S.~Mishra, C.~Bordin, A.~Tomasgard, and I.~Palu, ``A multi-agent system approach for optimal microgrid expansion planning under uncertainty,'' \emph{International Journal of Electrical Power \& Energy Systems}, vol. 109, pp. 696--709, 2019.

\bibitem{franze2018distributed}
G.~Franz{\`e}, W.~Lucia, and F.~Tedesco, ``A distributed model predictive control scheme for leader--follower multi-agent systems,'' \emph{International Journal of Control}, vol.~91, no.~2, pp. 369--382, 2018.

\bibitem{wang2014synthesis}
P.~Wang and B.~Ding, ``A synthesis approach of distributed model predictive control for homogeneous multi-agent system with collision avoidance,'' \emph{International Journal of Control}, vol.~87, no.~1, pp. 52--63, 2014.

\bibitem{luo2017multi}
R.~Luo, R.~Bourdais, T.~J. van~den Boom, and B.~De~Schutter, ``Multi-agent model predictive control based on resource allocation coordination for a class of hybrid systems with limited information sharing,'' \emph{Engineering Applications of Artificial Intelligence}, vol.~58, pp. 123--133, 2017.

\bibitem{eisen2017decentralized}
M.~Eisen, A.~Mokhtari, and A.~Ribeiro, ``Decentralized quasi-newton methods,'' \emph{IEEE Transactions on Signal Processing}, vol.~65, no.~10, pp. 2613--2628, 2017.

\bibitem{eisen2019primal}
------, ``A primal-dual quasi-newton method for exact consensus optimization,'' \emph{IEEE Transactions on Signal Processing}, vol.~67, no.~23, pp. 5983--5997, 2019.

\bibitem{yang2019survey}
T.~Yang, X.~Yi, J.~Wu, Y.~Yuan, D.~Wu, Z.~Meng, Y.~Hong, H.~Wang, Z.~Lin, and K.~H. Johansson, ``A survey of distributed optimization,'' \emph{Annual Reviews in Control}, vol.~47, pp. 278--305, 2019.

\bibitem{shorinwa2023distributedsurvey}
O.~Shorinwa, T.~Halsted, J.~Yu, and M.~Schwager, ``Distributed optimization methods for multi-robot systems: Part ii--a survey,'' \emph{arXiv preprint arXiv:2301.11361}, 2023.

\bibitem{nedic2009}
A.~Nedic and A.~Ozdaglar, ``Distributed subgradient methods for multi-agent optimization,'' \emph{IEEE Transactions on Automatic Control}, vol.~54, no.~1, pp. 48--61, 2009.

\bibitem{lobel2010distributed}
I.~Lobel and A.~Ozdaglar, ``Distributed subgradient methods for convex optimization over random networks,'' \emph{IEEE Transactions on Automatic Control}, vol.~56, no.~6, pp. 1291--1306, 2010.

\bibitem{shi2015extra}
W.~Shi, Q.~Ling, G.~Wu, and W.~Yin, ``{EXTRA}: An exact first-order algorithm for decentralized consensus optimization,'' \emph{SIAM Journal on Optimization}, vol.~25, no.~2, pp. 944--966, 2015.

\bibitem{nedic2017achieving}
A.~Nedic, A.~Olshevsky, and W.~Shi, ``Achieving geometric convergence for distributed optimization over time-varying graphs,'' \emph{SIAM Journal on Optimization}, vol.~27, no.~4, pp. 2597--2633, 2017.

\bibitem{liao2022compressed}
Y.~Liao, Z.~Li, K.~Huang, and S.~Pu, ``A compressed gradient tracking method for decentralized optimization with linear convergence,'' \emph{IEEE Transactions on Automatic Control}, vol.~67, no.~10, pp. 5622--5629, 2022.

\bibitem{li2021accelerated}
H.~Li and Z.~Lin, ``Accelerated gradient tracking over time-varying graphs for decentralized optimization,'' \emph{arXiv preprint arXiv:2104.02596}, 2021.

\bibitem{lu2020nesterov}
Q.~L{\"u}, X.~Liao, H.~Li, and T.~Huang, ``A {N}esterov-like gradient tracking algorithm for distributed optimization over directed networks,'' \emph{IEEE Transactions on Systems, Man, and Cybernetics: Systems}, 2020.

\bibitem{sun2022distributed}
Y.~Sun, G.~Scutari, and A.~Daneshmand, ``Distributed optimization based on gradient tracking revisited: Enhancing convergence rate via surrogation,'' \emph{SIAM Journal on Optimization}, vol.~32, no.~2, pp. 354--385, 2022.

\bibitem{mateos2016distributed}
D.~Mateos-N{\'u}nez and J.~Cort{\'e}s, ``Distributed saddle-point subgradient algorithms with laplacian averaging,'' \emph{IEEE Transactions on Automatic Control}, vol.~62, no.~6, pp. 2720--2735, 2016.

\bibitem{cortes2019distributed}
J.~Cort{\'e}s and S.~K. Niederl{\"a}nder, ``Distributed coordination for nonsmooth convex optimization via saddle-point dynamics,'' \emph{Journal of Nonlinear Science}, vol.~29, pp. 1247--1272, 2019.

\bibitem{aybat2016primal}
N.~S. Aybat and E.~Yazdandoost~Hamedani, ``A primal-dual method for conic constrained distributed optimization problems,'' \emph{Advances in neural information processing systems}, vol.~29, 2016.

\bibitem{mateos2010distributed}
G.~Mateos, J.~A. Bazerque, and G.~B. Giannakis, ``Distributed sparse linear regression,'' \emph{IEEE Transactions on Signal Processing}, vol.~58, no.~10, pp. 5262--5276, 2010.

\bibitem{mota2013d}
J.~F. Mota, J.~M. Xavier, P.~M. Aguiar, and M.~P{\"u}schel, ``D-admm: A communication-efficient distributed algorithm for separable optimization,'' \emph{IEEE Transactions on Signal processing}, vol.~61, no.~10, pp. 2718--2723, 2013.

\bibitem{chang2014multi}
T.-H. Chang, M.~Hong, and X.~Wang, ``Multi-agent distributed optimization via inexact consensus {ADMM},'' \emph{IEEE Transactions on Signal Processing}, vol.~63, no.~2, pp. 482--497, 2014.

\bibitem{shorinwa2020scalable}
O.~Shorinwa, T.~Halsted, and M.~Schwager, ``Scalable distributed optimization with separable variables in multi-agent networks,'' in \emph{2020 American Control Conference (ACC)}.\hskip 1em plus 0.5em minus 0.4em\relax IEEE, 2020, pp. 3619--3626.

\bibitem{carli2019distributed}
R.~Carli and M.~Dotoli, ``Distributed alternating direction method of multipliers for linearly constrained optimization over a network,'' \emph{IEEE Control Systems Letters}, vol.~4, no.~1, pp. 247--252, 2019.

\bibitem{zhang2018consensus}
Y.~Zhang and M.~M. Zavlanos, ``A consensus-based distributed augmented lagrangian method,'' in \emph{2018 IEEE Conference on Decision and Control (CDC)}.\hskip 1em plus 0.5em minus 0.4em\relax IEEE, 2018, pp. 1763--1768.

\bibitem{jakovetic2014linear}
D.~Jakoveti{\'c}, J.~M. Moura, and J.~Xavier, ``Linear convergence rate of a class of distributed augmented lagrangian algorithms,'' \emph{IEEE Transactions on Automatic Control}, vol.~60, no.~4, pp. 922--936, 2014.

\bibitem{kia2017augmented}
S.~S. Kia, ``An augmented lagrangian distributed algorithm for an in-network optimal resource allocation problem,'' in \emph{2017 American Control Conference (ACC)}.\hskip 1em plus 0.5em minus 0.4em\relax IEEE, 2017, pp. 3312--3317.

\bibitem{mokhtari2016network}
A.~Mokhtari, Q.~Ling, and A.~Ribeiro, ``Network newton distributed optimization methods,'' \emph{IEEE Transactions on Signal Processing}, vol.~65, no.~1, pp. 146--161, 2016.

\bibitem{liu2023communication}
H.~Liu, J.~Zhang, A.~M.-C. So, and Q.~Ling, ``A communication-efficient decentralized newton's method with provably faster convergence,'' \emph{IEEE Transactions on Signal and Information Processing over Networks}, 2023.

\bibitem{mansoori2019fast}
F.~Mansoori and E.~Wei, ``A fast distributed asynchronous newton-based optimization algorithm,'' \emph{IEEE Transactions on Automatic Control}, vol.~65, no.~7, pp. 2769--2784, 2019.

\bibitem{soori2020dave}
S.~Soori, K.~Mishchenko, A.~Mokhtari, M.~M. Dehnavi, and M.~Gurbuzbalaban, ``Dave-qn: A distributed averaged quasi-newton method with local superlinear convergence rate,'' in \emph{International Conference on Artificial Intelligence and Statistics}.\hskip 1em plus 0.5em minus 0.4em\relax PMLR, 2020, pp. 1965--1976.

\bibitem{mokhtari2016decentralized}
A.~Mokhtari, W.~Shi, Q.~Ling, and A.~Ribeiro, ``A decentralized second-order method with exact linear convergence rate for consensus optimization,'' \emph{IEEE Transactions on Signal and Information Processing over Networks}, vol.~2, no.~4, pp. 507--522, 2016.

\bibitem{li2022communication}
Y.~Li, P.~G. Voulgaris, and N.~M. Freris, ``A communication efficient quasi-newton method for large-scale distributed multi-agent optimization,'' in \emph{ICASSP 2022-2022 IEEE International Conference on Acoustics, Speech and Signal Processing (ICASSP)}.\hskip 1em plus 0.5em minus 0.4em\relax IEEE, 2022, pp. 4268--4272.

\bibitem{dennis1977quasi}
J.~E. Dennis, Jr and J.~J. Mor{\'e}, ``Quasi-newton methods, motivation and theory,'' \emph{SIAM review}, vol.~19, no.~1, pp. 46--89, 1977.

\bibitem{davidon1959variable}
W.~Davidon, ``Variable metric method for minimization, argonne natl,'' \emph{Labs., ANL-5990 Rev}, 1959.

\bibitem{byrd1996analysis}
R.~H. Byrd, H.~F. Khalfan, and R.~B. Schnabel, ``Analysis of a symmetric rank-one trust region method,'' \emph{SIAM Journal on Optimization}, vol.~6, no.~4, pp. 1025--1039, 1996.

\bibitem{zhu2010discrete}
M.~Zhu and S.~Mart{\'\i}nez, ``Discrete-time dynamic average consensus,'' \emph{Automatica}, vol.~46, no.~2, pp. 322--329, 2010.

\bibitem{deuflhard1979affine}
P.~Deuflhard and G.~Heindl, ``Affine invariant convergence theorems for newton’s method and extensions to related methods,'' \emph{SIAM Journal on Numerical Analysis}, vol.~16, no.~1, pp. 1--10, 1979.

\bibitem{fontecilla1987convergence}
R.~Fontecilla, T.~Steihaug, and R.~A. Tapia, ``A convergence theory for a class of quasi-newton methods for constrained optimization,'' \emph{SIAM Journal on Numerical Analysis}, vol.~24, no.~5, pp. 1133--1151, 1987.

\bibitem{tapia1978quasi}
R.~Tapia, ``Quasi-newton methods for equality constrained optimization: Equivalence of existing methods and a new implementation,'' in \emph{Nonlinear programming 3}.\hskip 1em plus 0.5em minus 0.4em\relax Elsevier, 1978, pp. 125--164.

\bibitem{xin2019distributed}
R.~Xin and U.~A. Khan, ``Distributed heavy-ball: A generalization and acceleration of first-order methods with gradient tracking,'' \emph{IEEE Transactions on Automatic Control}, vol.~65, no.~6, pp. 2627--2633, 2019.

\bibitem{xu2015augmented}
J.~Xu, S.~Zhu, Y.~C. Soh, and L.~Xie, ``Augmented distributed gradient methods for multi-agent optimization under uncoordinated constant stepsizes,'' in \emph{2015 54th IEEE Conference on Decision and Control (CDC)}.\hskip 1em plus 0.5em minus 0.4em\relax IEEE, 2015, pp. 2055--2060.

\bibitem{shorinwa2023distributed}
O.~Shorinwa and M.~Schwager, ``Distributed model predictive control via separable optimization in multi-agent networks,'' \emph{IEEE Transactions on Automatic Control}, 2023.

\bibitem{shorinwa2024distributed}
------, ``Distributed conjugate gradient method via conjugate direction tracking,'' in \emph{2024 American Control Conference (ACC)}.\hskip 1em plus 0.5em minus 0.4em\relax IEEE, 2024, pp. 2066--2073.

\end{thebibliography}

        \clearpage
        \begin{appendices}
	\section{Numerical Evaluations of DQN}
\label{sec:supp_simulations_uncons}
We present results examining the performance of DQN in well-conditioned and poorly-conditioned quadratic optimization problems.

\subsection{Quadratic Programming}
\label{sec:sim_QP}
Quadratic programming (QP) problems are prevalent in many domains, e.g., in design, estimation, and analysis. We consider separable quadratic programs of the form:
\begin{equation}
	\label{eq:dis_QP}
	\minimize{x \in \mathbb{R}^{n}} \sum_{i = 1}^{N } \left(\frac{1}{2} x^{\T} P_{i} x + q_{i}^{\T}x \right),
\end{equation}
where agent $i$ has access to only its local problem data ${P_{i} \in \mbb{R}^{m_{i} \times n}}$ and ${q_{i} \in \mbb{R}^{n}}$, which often arises when the problem data is collected locally by each agent. We consider the quadratic program in \eqref{eq:dis_QP} with $50$ agents (${N = 50}$) and ${n = 40}$. We generate the local data $P_{i}$ by randomly generating the data matrix ${A_{i} \in \mbb{R}^{m_{i} \times n_{i}}}$, with ${P_{i} = A_{i}^{\T}A_{i}}$ and the local data $q_{i}$ by randomly generating the data vector ${b_{i} \in \mbb{R}^{m_{i}}}$. We randomly generate $m_{i}$ from the uniform distribution over the interval ${[5, 30)}$. We note the resulting local objective function is non-strongly-convex. Further, the local objective function of each agent does not have a unique optimal solution, since the local data $P_{i}$ is not full-rank,~${\forall i \in \mcal{V}}$. We assess the performance of each method in terms of the computation time required for convergence to an RSE of $1\mathrm{e}^{-10}$, in addition to the total number of bits of information shared by each agent prior to convergence of the algorithm. Further, we set the maximum number of iterations of each method at $1000$. We randomly initialize the iterate of each agent, and use the same initial iterate for all methods. We utilize a closed-form solution for the optimization problems arising in the primal update procedure of C-ADMM, making C-ADMM more competitive with other distributed algorithms in terms of computation time by eliminating the need for a nested iterative optimization solver. 

\subsubsection{Well-Conditioned Optimization Problems}
\label{sec:sim_well_cond}
Generally, first-order distributed optimization methods, which utilize the local gradients of the objective function, exhibit good convergence rates in well-conditioned optimization problems. However, the performance of these methods degrade notably in poorly-conditioned optimization problems. We begin by examining the performance of our distributed algorithm in comparison to other distributed optimization algorithms in well-conditioned quadratic programs, on a randomly-generated connected communication graph, with connectivity ratio ${\kappa = 0.569}$. We randomly generate the problem instance with the condition number of the aggregate Hessian ${P = \sum_{i \in \mcal{V}} P_{i}}$ given by $2.269$.

In the D-Newton algorithm, we solve for the local Newton step using matrix factorization and back-substitution, which we found to be faster than utilizing a conjugate gradient method. In evaluating the D-Newton algorithm, which involves compressing the local approximate Hessian of each agent, we implemented the \emph{Top-$K$} and the \emph{Rank-$K$} variants. However, we note that, in many cases, the \emph{Top-$K$} variant failed to converge in the larger-scale problems we consider in this section, although it did converge in smaller-scale problems, unlike the \emph{Rank-$K$} variant, which converged in all problems for appropriate values of $K$. We note that \emph{Rank-$K$} compression represents the optimal low-rank approximation of the Hessian matrix, explaining our observations. Consequently, we utilize the \emph{Rank-$K$} variant of the D-Newton algorithm in all the evaluations discussed in the rest of this paper. We set the number of communication rounds for multi-step consensus in the D-Newton algorithm at $15$, based on the simulation results provided in \cite{liu2023communication}, with ${M = 1}$, ${\theta = 0.03}$, and ${K = 25}$. We selected a value of $K$ such that the algorithm converged within the maximum allowable number of iterations, noting that the algorithm failed to converge at small values of $K$. DQN requires an initial estimate of the inverse Hessian, which we initialize to ${3I_{n}}$. In initializing \mbox{ESOM-$K$}, we set ${\epsilon = 0.1}$ and ${K = 10}$, where $K$ denotes the number of nested communication rounds. Interchangeably, we refer to the \mbox{ESOM-$K$} algorithm as \mbox{ESOM}.

Table~\ref{tab:well_conditioned_QP} provides the computation time required by each agent, in seconds, and the associated cumulative number of bits of information shared by each agent to its neighbors, in Megabytes (MB), to compute the optimal solution of the quadratic program in \eqref{eq:dis_QP}. The best-performing stats are shown in bold font. We note that DIGing-ATC achieves the fastest computation time, while incurring the least communication cost, highlighting the strong performance of distributed first-order methods on well-conditioned optimization problems. \mbox{$ABm$-DS} achieves the second-best computation time, while DQN requires a slightly longer computation time for convergence to the optimal solution, although DQN achieves the second-best communication cost, incurring a lower communication cost compared to \mbox{C-ADMM}. The distributed second-order algorithms ESOM and D-Newton incur significantly greater computation time and communication cost, compared to the other algorithms.

\begin{table}[th]
	\centering
	\caption{The cumulative computation time (in seconds) and the cumulative size of messages (MB) exchanged per agent in the well-conditioned quadratic program, with ${\kappa = 0.569}$.}
	\label{tab:well_conditioned_QP}
	\begin{adjustbox}{width=\linewidth}
		{\begin{tabular}{l c c}
				\toprule
				Algorithm & Computation Time (secs.) & Messages Exchanged (MB)  \\
				\midrule
				$ABm$-DS \cite{xin2019distributed} & $1.164 \mathrm{e}^{-3}$ & $1.370 \mathrm{e}^{-1}$ \\
				C-ADMM \cite{mateos2010distributed} & $1.916 \mathrm{e}^{-3}$ & $8.896 \mathrm{e}^{-2}$ \\
				DIGing-ATC \cite{nedic2017achieving} & $\bm{1.787 \mathrm{e}^{-4}}$ & $\bm{3.136 \mathrm{e}^{-2}}$ \\
				D-Newton \emph{Rank-$K$} \cite{liu2023communication} & $2.361 \mathrm{e}^{-1}$ & $3.402 \mathrm{e}^{0}$ \\
				ESOM \cite{mokhtari2016decentralized} & $1.872 \mathrm{e}^{-2}$ & $6.874 \mathrm{e}^{-1}$ \\
				DQN (ours) & $1.957 \mathrm{e}^{-3}$ & $3.648 \mathrm{e}^{-2}$ \\
				\bottomrule
		\end{tabular}}
	\end{adjustbox}
\end{table}

In Figure~\ref{fig:well_conditioned_QP_all_alg}, we show the convergence error of each agent per iteration of each algorithm for this problem instance. As depicted in Figure~\ref{fig:well_conditioned_QP_all_alg}, by leveraging information on the curvature, distributed second-order optimization methods generally converge faster than other methods in terms of the number of iterations required for convergence. However, this benefit often comes at the expense of an increase in the cumulative computation time and communication cost required for convergence. Quasi-Newton methods, such as DQN, seek to strike a good tradeoff between the number of iterations required for convergence and the associated cumulative computation time and communication cost, while leveraging an estimate of the curvature of the objective function.

\begin{figure}[th]
	\centering
	\includegraphics[width=\linewidth]{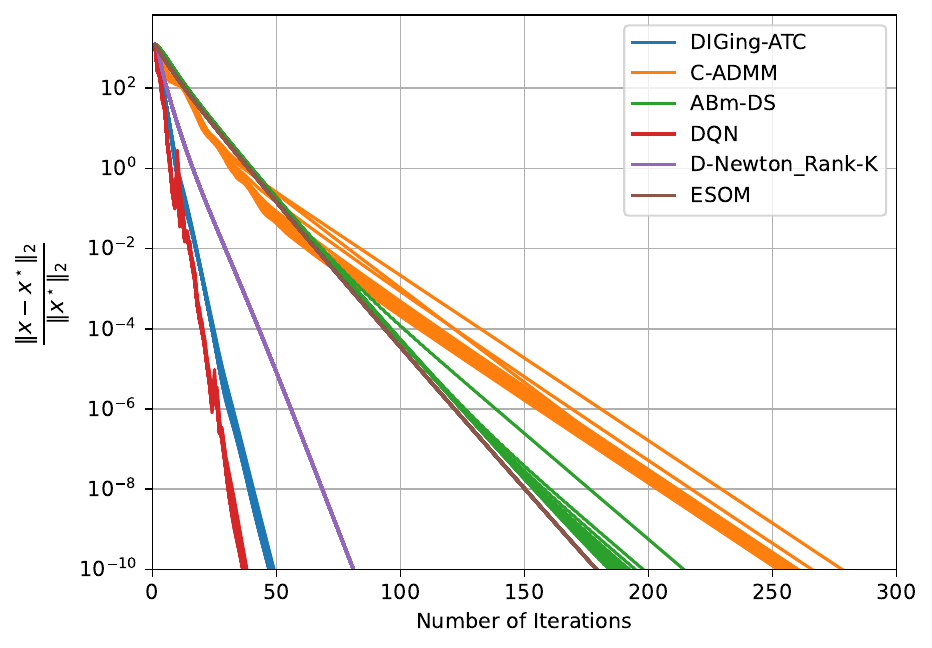}
	\caption{Convergence error of each agent per iteration in the distributed quadratic programming problem with a well-conditioned Hessian on a randomly-generated connected communication graph, with ${\kappa = 0.569}$.}
	\label{fig:well_conditioned_QP_all_alg}
\end{figure}

We proceed with examining the performance of each distributed algorithm on $20$ well-conditioned quadratic programs across a range of communication networks, with varying connectivity ratios, and provide the results in Table~\ref{tab:well_conditioned_QP_sweep}. We do not include the D-Newton \emph{Rank-$K$} algorithm in this evaluation, noting that D-Newton \emph{Rank-$K$} requires about an order of magnitude greater computation time and five-times greater communication cost than the distributed second-order method ESOM. We consider well-conditioned optimization problems, with the condition number of the problems ranging from $2.113$ to $2.568$. All methods converged to the optimal solution in all instances of the problem considered across all communication networks within the maximum number of iterations.
From Table~\ref{tab:well_conditioned_QP_sweep}, we note that DIGing-ATC achieves the fastest computation time across all the communication networks considered in this setup. In addition, DIGing-ATC incurs the least communication cost when ${\kappa = 0.57}$, consistent with the preceding discussion. C-ADMM requires communication of the least amount of information on sparsely-connected communication graphs, with ${\kappa = 0.21}$. DQN incurs the best performance in terms of the communication cost on more-densely-connected communication networks with ${\kappa = 0.74}$ and ${\kappa = 0.85}$. The second-order method ESOM incurs the greatest computation time and communication cost on these problems.

\begin{table*}[t]
	\centering
	\caption{The mean and standard deviation of the cumulative computation time (Comp. Time), in seconds (secs.), and the cumulative size of messages exchanged per agent (Comm. Cost), in Megabytes (MB), in $20$ well-conditioned quadratic programs, across a range of communication networks. All methods converged successfully to the specified tolerance on all problem instances.}
	\label{tab:well_conditioned_QP_sweep}
		{\begin{tabular}{l c c c c}
				\toprule
				\multirow{2}{*}{\textbf{Algorithm}} & \multicolumn{2}{c}{$\bm{\kappa} = \bm{0.21}$} & \multicolumn{2}{c}{$\bm{\kappa} = \bm{0.57}$} \\
				\cmidrule(lr){2-3} \cmidrule(lr){4-5}
				 & Comp. Time (secs.) & Comm. Cost (MB)
				 & Comp. Time (secs.) & Comm. Cost (MB) \\
				\midrule
				$ABm$-DS \cite{xin2019distributed} & $2.75\mathrm{e}^{-3}  \pm 7.16\mathrm{e}^{-4}$ & $1.98\mathrm{e}^{-1} \pm 5.24\mathrm{e}^{-2}$
				& $2.82\mathrm{e}^{-3}  \pm 7.23\mathrm{e}^{-4}$ & $1.94\mathrm{e}^{-1} \pm 4.43\mathrm{e}^{-2}$ \\
				C-ADMM \cite{mateos2010distributed} & $3.53\mathrm{e}^{-3}  \pm 7.67\mathrm{e}^{-4}$ & $\bm{1.24\mathrm{e}^{-1} \pm 2.14\mathrm{e}^{-2}}$
				& $3.21\mathrm{e}^{-3}  \pm 8.88\mathrm{e}^{-4}$ & $1.10\mathrm{e}^{-1} \pm 2.49\mathrm{e}^{-2}$ \\
				DIGing-ATC \cite{nedic2017achieving} & $\bm{1.96\mathrm{e}^{-3}  \pm 5.08\mathrm{e}^{-4}}$ & $2.08\mathrm{e}^{-1} \pm 5.01\mathrm{e}^{-2}$
				& $\bm{3.78\mathrm{e}^{-4}  \pm 7.26\mathrm{e}^{-5}}$ & $\bm{3.67\mathrm{e}^{-2} \pm 8.88\mathrm{e}^{-3}}$ \\
				ESOM \cite{mokhtari2016decentralized} & $6.42\mathrm{e}^{-3}  \pm 1.78\mathrm{e}^{-3}$ & $4.58\mathrm{e}^{-1} \pm 1.37\mathrm{e}^{-1}$
				& $6.97\mathrm{e}^{-3}  \pm 1.76\mathrm{e}^{-3}$ & $4.66\mathrm{e}^{-1} \pm 1.42\mathrm{e}^{-1}$ \\
				DQN (ours) & $1.82\mathrm{e}^{-2}  \pm 4.84\mathrm{e}^{-3}$ & $1.89\mathrm{e}^{-1} \pm 1.42\mathrm{e}^{-2}$
				& $3.89\mathrm{e}^{-3}  \pm 1.18\mathrm{e}^{-3}$ & $4.14\mathrm{e}^{-2} \pm 5.05\mathrm{e}^{-3}$ \\
				\bottomrule
		\end{tabular}}

	\bigskip

		{\begin{tabular}{l c c c c}
				\toprule
				\multirow{2}{*}{\textbf{Algorithm}} & \multicolumn{2}{c}{$\bm{\kappa} = \bm{0.74}$} & \multicolumn{2}{c}{$\bm{\kappa} = \bm{0.85}$} \\
				\cmidrule(lr){2-3} \cmidrule(lr){4-5}
		 & Comp. Time (secs.) & Comm. Cost (MB)
		 & Comp. Time (secs.) & Comm. Cost (MB) \\
				\midrule
				$ABm$-DS \cite{xin2019distributed} & $2.67\mathrm{e}^{-3}  \pm 6.14\mathrm{e}^{-4}$ & $1.89\mathrm{e}^{-1} \pm 3.86\mathrm{e}^{-2}$
				& $2.98\mathrm{e}^{-3}  \pm 8.62\mathrm{e}^{-4}$ & $1.96\mathrm{e}^{-1} \pm 4.21\mathrm{e}^{-2}$ \\
				C-ADMM \cite{mateos2010distributed} & $3.09\mathrm{e}^{-3}  \pm 1.09\mathrm{e}^{-3}$ & $1.04\mathrm{e}^{-1} \pm 2.81\mathrm{e}^{-2}$
				& $3.30\mathrm{e}^{-3}  \pm 1.10\mathrm{e}^{-3}$ & $1.04\mathrm{e}^{-1} \pm 2.96\mathrm{e}^{-2}$ \\
				DIGing-ATC \cite{nedic2017achieving} & $\bm{3.44\mathrm{e}^{-4}  \pm 1.03\mathrm{e}^{-4}}$ & $3.05\mathrm{e}^{-2} \pm 7.08\mathrm{e}^{-3}$
				& $\bm{3.08\mathrm{e}^{-4}  \pm 9.88\mathrm{e}^{-5}}$ & $2.62\mathrm{e}^{-2} \pm 6.03\mathrm{e}^{-3}$ \\
				ESOM \cite{mokhtari2016decentralized} & $6.31\mathrm{e}^{-3}  \pm 2.33\mathrm{e}^{-3}$ & $4.01\mathrm{e}^{-1} \pm 1.51\mathrm{e}^{-1}$
				& $6.10\mathrm{e}^{-3}  \pm 1.74\mathrm{e}^{-3}$ & $3.99\mathrm{e}^{-1} \pm 1.09\mathrm{e}^{-1}$ \\
				DQN (ours) & $2.97\mathrm{e}^{-3}  \pm 7.00\mathrm{e}^{-4}$ & $\bm{2.75\mathrm{e}^{-2} \pm 1.57\mathrm{e}^{-3}}$
				& $2.42\mathrm{e}^{-3}  \pm 5.49\mathrm{e}^{-4}$ & $\bm{2.27\mathrm{e}^{-2} \pm 8.74\mathrm{e}^{-4}}$ \\
				\bottomrule
		\end{tabular}}
\end{table*}

\subsubsection{Poorly-Conditioned Optimization Problems}
\label{sec:sim_poorly_cond}
We consider quadratic programs where the Hessian of the objective function is not well-conditioned. Generally, second-order methods exhibit good empirical convergence rates in these problems. In contrast, the convergence rates of first-order optimization methods typically degrade significantly in these problems. As in the well-conditioned setting, we assess the performance of each distributed algorithm on a randomly-generated connected communication network, with ${\kappa = 0.569}$, and a randomly-generated quadratic program, with the condition number of the aggregate Hessian given by $75.367$. We implement the D-Newton \emph{Rank-$K$} algorithm with ${K = 25}$, ${M = 1}$, and ${\theta = 0.03}$ and utilize $15$ communication rounds for the multi-step consensus procedure. Further, we initialize DQN with the estimate of the inverse Hessian of each agent's local objective function given by $0.1 I_{n}$. We set ${\epsilon = 0.1}$ and ${K = 10}$ in \mbox{ESOM-$K$}.

We present the cumulative computation time and the cumulative size of messages exchanged per agent in each distributed algorithm in Table~\ref{tab:poorly_conditioned_QP}. In addition, we indicate the success status of each algorithm in converging to the optimal solution. From Table~\ref{tab:poorly_conditioned_QP}, we note that the first-order algorithms \mbox{$ABm$-DS} and DIGing-ATC fail to converge to the optimal solution within the maximum number of iterations, with the exception of \mbox{C-ADMM}, which is generally less sensitive to the condition number of the problem, provided that a suitable value is selected for the penalty parameter. In contrast, DQN and the second-order algorithms D-Newton \emph{Rank-$K$} and ESOM converge to the optimal solution within $1000$ iterations. In this problem, C-ADMM requires the least computation time for convergence to the optimal solution, while DQN incurs the least communication cost for convergence.

\begin{table}[th]
	\centering
	\caption{The cumulative computation time (in seconds), the cumulative size of messages exchanged per agent, and the convergence status in the poorly-conditioned quadratic program, with ${\kappa = 0.569}$.}
	\label{tab:poorly_conditioned_QP}
	\begin{adjustbox}{width=\linewidth}
		{\begin{tabular}{l c c c}
				\toprule
				Algorithm & Computation Time (secs.) & Messages Exchanged (MB) & Converged \\
				\midrule
				$ABm$-DS \cite{xin2019distributed} & --- & --- & \xmark \\
				C-ADMM \cite{mateos2010distributed} & $\bm{5.239 \mathrm{e}^{-3}}$ & $2.592 \mathrm{e}^{-1}$ & \cmark \\
				DIGing-ATC \cite{nedic2017achieving} & --- & --- & \xmark \\
				D-Newton \emph{Rank-$K$} \cite{liu2023communication} & $4.224 \mathrm{e}^{-1}$ & $6.090 \mathrm{e}^{0}$ & \cmark \\
				ESOM \cite{mokhtari2016decentralized} & $2.439 \mathrm{e}^{-2}$ & $9.101 \mathrm{e}^{-1}$ & \cmark \\
				DQN (ours) & $7.144 \mathrm{e}^{-3}$ & $\bm{1.248 \mathrm{e}^{-1}}$ & \cmark \\
				\bottomrule
		\end{tabular}}
	\end{adjustbox}
\end{table}

In Figure~\ref{fig:poorly_conditioned_QP_all_alg}, we show the convergence error of each agent per iteration of each distributed algorithm for the poorly-conditioned quadratic program. As shown in Figure~\ref{fig:poorly_conditioned_QP_all_alg}, the performance of DIGing-ATC and $ABm$-DS degrades significantly, requiring more than $1000$ iterations for convergence, unlike C-ADMM. Although the second-order algorithms D-Newton \emph{Rank-$K$} and ESOM require significantly fewer iterations for convergence, these methods still require a greater computation time, along with a greater communication cost, compared to \mbox{C-ADMM}. In contrast, DQN provides a better tradeoff between the number of iterations required for convergence and the associated computation and communication cost.

\begin{figure}[th]
	\centering
	\includegraphics[width=\linewidth]{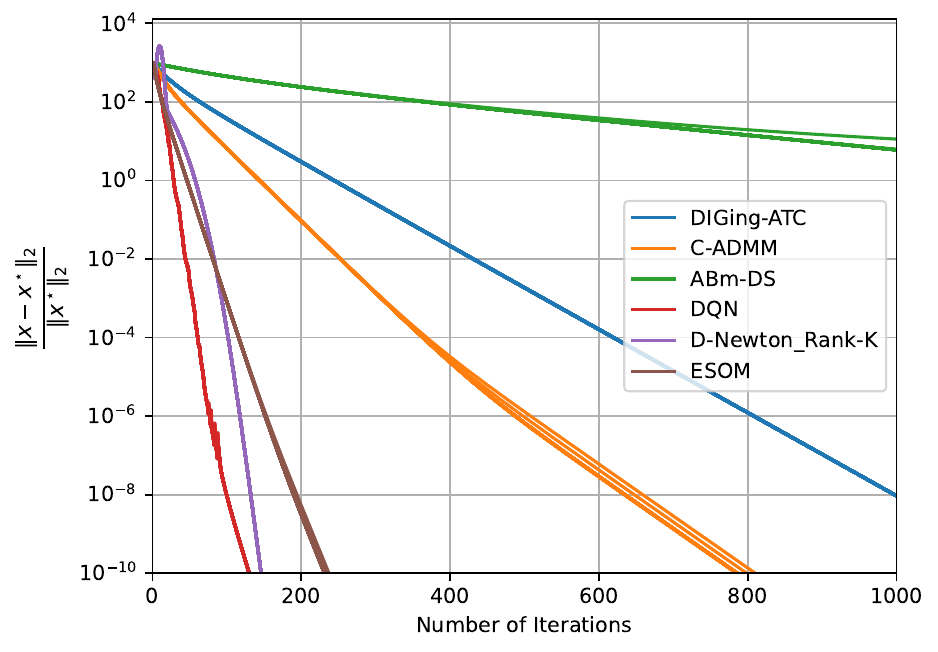}
	\caption{Convergence error of each agent per iteration in the distributed quadratic programming problem with a \emph{poorly-conditioned} Hessian on a randomly-generated connected communication graph, with ${\kappa = 0.569}$. The performance of DIGing-ATC and $ABm$-DS degrades notably in poorly-conditioned problems, compared to C-ADMM, DQN, and the second-order methods D-Newton \emph{Rank-$K$} and ESOM.}
	\label{fig:poorly_conditioned_QP_all_alg}
\end{figure}

Further, we examine the performance of the distributed optimization methods on $20$ randomly-generated poorly-conditioned quadratic programs on different communication graphs, with the condition number of the aggregate Hessian ranging between $42.339$ and $172.149$. We initialize the estimate of the inverse Hessian of each agent's local objective function to $0.1 I_{n}$ in DQN, and further initialize \mbox{ESOM-$K$} with ${\epsilon = 0.1}$ and ${K = 10}$. We do not include D-Newton \emph{Rank-$K$} in this evaluation, noting that D-Newton \emph{Rank-$K$} requires more than one order of magnitude greater computation time and about seven-times greater communication cost compared to the best-competing second-order method ESOM. In Table~\ref{tab:poorly_conditioned_QP_sweep_success}, we provide the mean and standard deviation of the cumulative computation time and the cumulative size of the messages exchanged per agent in each algorithm. We compute the summary statistics over successful runs of each algorithm and provide the associated success rate of each algorithm in Table~\ref{tab:poorly_conditioned_QP_sweep_success}. We display the best-performing stats in bold font, considering only algorithms that converge on all problems. We display entries associated with a success rate below $100\%$ in red. We note that only DQN and ESOM attain a perfect success rate, converging to the optimal solution in all problems. In line with the preceding discussion, the first-order methods $ABm$-DS, DIGing-ATC, and C-ADMM attain low success rates, especially on sparsely-connected communication graphs. As noted in the Table~\ref{tab:poorly_conditioned_QP_sweep_success}, DQN incurs the least communication cost across all values of the connectivity ratio $\kappa$, reducing the communication cost of ESOM by about seven-times. Moreover, DQN attains the fastest computation time across all values of $\kappa$, with the exception of ${\kappa = 0.21}$, where ESOM attains a faster computation time. Even without a limit on the maximum number of iterations available to each algorithm for convergence, DQN still requires the least communication overhead in comparison to all other algorithms, including $ABm$-DS, DIGing-ATC, and C-ADMM, across all values of ${\kappa}$, except ${\kappa = 0.21}$.

\begin{table*}[t]
	\centering
	\caption{The mean and standard deviation of the cumulative computation time (Comp. Time), in seconds (secs.), and the cumulative size of messages exchanged per agent (Comm. Cost), in Megabytes (MB), in $20$ poorly-conditioned quadratic programs, across a range of communication networks, in addition to the success rate of each algorithm in converging to the optimal solution. Entries associated with a non-zero success rate below $100\%$ are displayed in red.}
	\label{tab:poorly_conditioned_QP_sweep_success}
	\begin{adjustbox}{width=\linewidth}
		{\begin{tabular}{l c c c c c c}
				\toprule
				\multirow{2}{*}{\textbf{Algorithm}} & \multicolumn{3}{c}{$\bm{\kappa} = \bm{0.21}$} & \multicolumn{3}{c}{$\bm{\kappa} = \bm{0.57}$} \\
				\cmidrule(lr){2-4} \cmidrule(lr){5-7}
				 & Comp. Time (secs.) & Comm. Cost (MB) & Success Rate ($\%$)
				 & Comp. Time (secs.) & Comm. Cost (MB) & Success Rate ($\%$) \\
				\midrule
				$ABm$-DS \cite{xin2019distributed} & --- & --- & $0$
				& --- & --- & $0$ \\
				C-ADMM \cite{mateos2010distributed} & {\color{red} $7.04\mathrm{e}^{-3}  \pm 1.37\mathrm{e}^{-3}$} & {\color{red} $2.49\mathrm{e}^{-1} \pm 4.23\mathrm{e}^{-2}$} & {\color{red} $45$}
				& {\color{red} $7.24\mathrm{e}^{-3}  \pm 1.40\mathrm{e}^{-3}$} & {\color{red} $2.51\mathrm{e}^{-1} \pm 4.27\mathrm{e}^{-2}$} & {\color{red} $60$} \\
				DIGing-ATC \cite{nedic2017achieving} & {\color{red} $5.90\mathrm{e}^{-3}$} & {\color{red} $5.65\mathrm{e}^{-1}$} & {\color{red} $5$}
				& {\color{red} $4.62\mathrm{e}^{-3}  \pm 7.63\mathrm{e}^{-4}$} & {\color{red} $5.10\mathrm{e}^{-1} \pm 8.99\mathrm{e}^{-2}$} & {\color{red} $15$} \\
				ESOM \cite{mokhtari2016decentralized} & $\bm{3.23\mathrm{e}^{-2}  \pm 1.07\mathrm{e}^{-2}}$ & $2.00\mathrm{e}^{0} \pm 3.36\mathrm{e}^{-1}$ & $100$
				& $1.74\mathrm{e}^{-2}  \pm 5.14\mathrm{e}^{-3}$ & $1.10\mathrm{e}^{0} \pm 2.55\mathrm{e}^{-1}$ & $100$ \\
				DQN (ours) & $3.85\mathrm{e}^{-2}  \pm 1.15\mathrm{e}^{-2}$ & $\bm{4.10\mathrm{e}^{-1} \pm 1.19\mathrm{e}^{-1}}$ & $100$
				& $\bm{1.63\mathrm{e}^{-2}  \pm 4.14\mathrm{e}^{-3}}$ & $\bm{1.61\mathrm{e}^{-1} \pm 2.45\mathrm{e}^{-2}}$ & $100$ \\
				\bottomrule
		\end{tabular}}
	\end{adjustbox}

	\bigskip

	\begin{adjustbox} {width=\linewidth}
		{\begin{tabular}{l c c c c c c}
				\toprule
				\multirow{2}{*}{\textbf{Algorithm}} & \multicolumn{3}{c}{$\bm{\kappa} = \bm{0.74}$} & \multicolumn{3}{c}{$\bm{\kappa} = \bm{0.85}$} \\
				\cmidrule(lr){2-4} \cmidrule(lr){5-7}
		 & Comp. Time (secs.) & Comm. Cost (MB) & Success Rate ($\%$)
		 & Comp. Time (secs.) & Comm. Cost (MB) & Success Rate ($\%$) \\
				\midrule
				$ABm$-DS \cite{xin2019distributed} & --- & --- & $0$
				& --- & --- & $0$ \\
				C-ADMM \cite{mateos2010distributed} & {\color{red} $7.48\mathrm{e}^{-3}  \pm 1.53\mathrm{e}^{-3}$} & {\color{red} $2.58\mathrm{e}^{-1} \pm 4.38\mathrm{e}^{-2}$} & {\color{red} $70$}
				& {\color{red} $8.29\mathrm{e}^{-3} \pm 2.02\mathrm{e}^{-3}$} & {\color{red} $2.58\mathrm{e}^{-1} \pm 4.58\mathrm{e}^{-2}$} & {\color{red} $70$} \\
				DIGing-ATC \cite{nedic2017achieving} & {\color{red} $4.84\mathrm{e}^{-3}  \pm 9.70\mathrm{e}^{-4}$} & {\color{red} $5.12\mathrm{e}^{-1} \pm 8.84\mathrm{e}^{-2}$} & {\color{red} $20$}
				& {\color{red} $5.27\mathrm{e}^{-3}  \pm 9.30\mathrm{e}^{-4}$} & {\color{red} $4.97\mathrm{e}^{-1} \pm 8.71\mathrm{e}^{-2}$} & {\color{red} $20$} \\
				ESOM \cite{mokhtari2016decentralized}& $1.78\mathrm{e}^{-2}  \pm 5.06\mathrm{e}^{-3}$ & $1.08\mathrm{e}^{0} \pm 2.59\mathrm{e}^{-1}$ & $100$
				& $1.94\mathrm{e}^{-2}  \pm 8.21\mathrm{e}^{-3}$ & $1.08\mathrm{e}^{0} \pm 2.34\mathrm{e}^{-1}$ & $100$ \\
				DQN (ours) & $\bm{1.37\mathrm{e}^{-2}  \pm 3.07\mathrm{e}^{-3}}$ & $\bm{1.52\mathrm{e}^{-1} \pm 4.01\mathrm{e}^{-2}}$ & $100$
				& $\bm{1.27\mathrm{e}^{-2}  \pm 4.75\mathrm{e}^{-3}}$ & $\bm{1.48\mathrm{e}^{-1} \pm 3.26\mathrm{e}^{-2}}$ & $100$ \\
				\bottomrule
		\end{tabular}}
	\end{adjustbox}
\end{table*}

	\section{Numerical Evaluations of EC-DQN}
\label{sec:supp_simulations_cons}
We present evaluation results of EC-DQN in well-conditioned basis pursuit problems, in comparison to existing algorithms for distributed constrained optimization. The problem setup is described in the paper.

\subsection{Basis Pursuit Denoising: Well-Conditioned Problems}
We examine the performance of each algorithm in $20$ \emph{well-conditioned} optimization problems on randomly-generated communication networks, with ${\kappa}$ varying from $0.219$ to $0.856$ and condition number of the Hessian ranging between $2.578$ and $2.097$.
We present the computation time and associated communication cost required for convergence per agent in each algorithm in Table~\ref{tab:well_conditioned_basis_run}. When ${\kappa = 0.22}$, SONATA achieves the fastest computation time, while \mbox{C-ADMM} requires the least communication cost for convergence to the optimal solution. However, for all other values of $\kappa$, EC-DQN attains the fastest computation time, while incurring the least communication cost for convergence. In these problems, C-ADMM requires a notably greater computation time relative to the other algorithms. DPDA fails to converge to the desired convergence threshold within the maximum number of iterations. In Figure~\ref{fig:well_conditioned_basis_run}, we examine the performance of each algorithm on a problem with condition number $2.312$ and a communication graph with ${\kappa = 0.578}$. We note that EC-DQN requires the fewest number of iterations for convergence to the optimal solution. Further, EC-DQN attains the fastest computation time and least communication cost of $3.17\mathrm{e}^{-2}$~secs. and $8.81\mathrm{e}^{-3}$~MB, respectively, compared to the best-competing algorithm SONATA with a computation time and communication cost of $8.52\mathrm{e}^{-2}$~secs. and $2.32\mathrm{e}^{-2}$~MB, respectively. Although the convergence error of the agents' iterates in DPDA decreases at each iteration, it does not decrease fast enough to attain convergence within $1000$ iterations. C-ADMM, however, converges in about $400$ iterations.

\begin{table*}[t]
	\centering
	\caption{The mean and standard deviation of the cumulative computation time (Comp. Time), in seconds (secs.), and communication cost (Comm. Cost), in Megabytes (MB), per agent, in $20$ well-conditioned basis pursuit denoising problems, on different communication networks.}
	\label{tab:well_conditioned_basis_run}
		{\begin{tabular}{l c c c c}
				\toprule
				\multirow{2}{*}{\textbf{Algorithm}} & \multicolumn{2}{c}{$\bm{\kappa} = \bm{0.22}$} & \multicolumn{2}{c}{$\bm{\kappa} = \bm{0.57}$} \\
				\cmidrule(lr){2-3} \cmidrule(lr){4-5}
				 & Comp. Time (secs.) & Comm. Cost (MB)
				 & Comp. Time (secs.) & Comm. Cost (MB) \\
				\midrule
				C-ADMM \cite{mateos2010distributed} & $2.73\mathrm{e}^{-2}  \pm 1.32\mathrm{e}^{-2}$ & $\bm{7.93\mathrm{e}^{-2} \pm 4.23\mathrm{e}^{-2}}$
				& $3.44\mathrm{e}^{-2}  \pm 1.91\mathrm{e}^{-2}$ & $8.93\mathrm{e}^{-2} \pm 4.36\mathrm{e}^{-2}$ \\
				DPDA \cite{aybat2016primal} & --- & ---
				& --- & --- \\
				SONATA \cite{sun2022distributed} & $\bm{1.74\mathrm{e}^{-2}  \pm 4.62\mathrm{e}^{-3}}$ & $9.80\mathrm{e}^{-2} \pm 3.94\mathrm{e}^{-2}$
				& $6.72\mathrm{e}^{-3}  \pm 2.13\mathrm{e}^{-3}$ & $4.61\mathrm{e}^{-2} \pm 1.80\mathrm{e}^{-2}$ \\
				EC-DQN (ours) & $1.91\mathrm{e}^{-2}  \pm 5.60\mathrm{e}^{-3}$ & $1.28\mathrm{e}^{-1} \pm 1.72\mathrm{e}^{-2}$
				& $\bm{4.95\mathrm{e}^{-3}  \pm 1.55\mathrm{e}^{-3}}$ & $\bm{3.24\mathrm{e}^{-2} \pm 3.32\mathrm{e}^{-3}}$ \\
				\bottomrule
		\end{tabular}}

	\bigskip

		{\begin{tabular}{l c c c c}
				\toprule
				\multirow{2}{*}{\textbf{Algorithm}} & \multicolumn{2}{c}{$\bm{\kappa} = \bm{0.75}$} & \multicolumn{2}{c}{$\bm{\kappa} = \bm{0.86}$} \\
				\cmidrule(lr){2-3} \cmidrule(lr){4-5}
		 & Comp. Time (secs.) & Comm. Cost (MB)
		 & Comp. Time (secs.) & Comm. Cost (MB) \\
				\midrule
				C-ADMM \cite{mateos2010distributed} & $2.83\mathrm{e}^{-2}  \pm 1.39\mathrm{e}^{-2}$ & $7.67\mathrm{e}^{-2} \pm 3.65\mathrm{e}^{-2}$
				& $2.49\mathrm{e}^{-2}  \pm 1.41\mathrm{e}^{-2}$ & $6.96\mathrm{e}^{-2} \pm3.54\mathrm{e}^{-2}$ \\
				DPDA \cite{aybat2016primal} & --- & ---
				& --- & --- \\
				SONATA \cite{sun2022distributed} & $5.29\mathrm{e}^{-3}  \pm 2.30\mathrm{e}^{-3}$ & $3.20\mathrm{e}^{-2} \pm 1.10\mathrm{e}^{-2}$
				& $7.08\mathrm{e}^{-3}  \pm 3.87\mathrm{e}^{-3}$ & $4.23\mathrm{e}^{-2} \pm 2.12\mathrm{e}^{-2}$ \\
				EC-DQN (ours) & $\bm{3.71\mathrm{e}^{-3}  \pm 1.08\mathrm{e}^{-3}}$ & $\bm{2.27\mathrm{e}^{-2} \pm 2.61\mathrm{e}^{-3}}$
				& $\bm{3.39\mathrm{e}^{-3}  \pm 8.60\mathrm{e}^{-4}}$ & $\bm{2.40\mathrm{e}^{-2} \pm 3.87\mathrm{e}^{-3}}$ \\
				\bottomrule
		\end{tabular}}
\end{table*}

\begin{figure}[th]
	\centering
	\includegraphics[width=\linewidth]{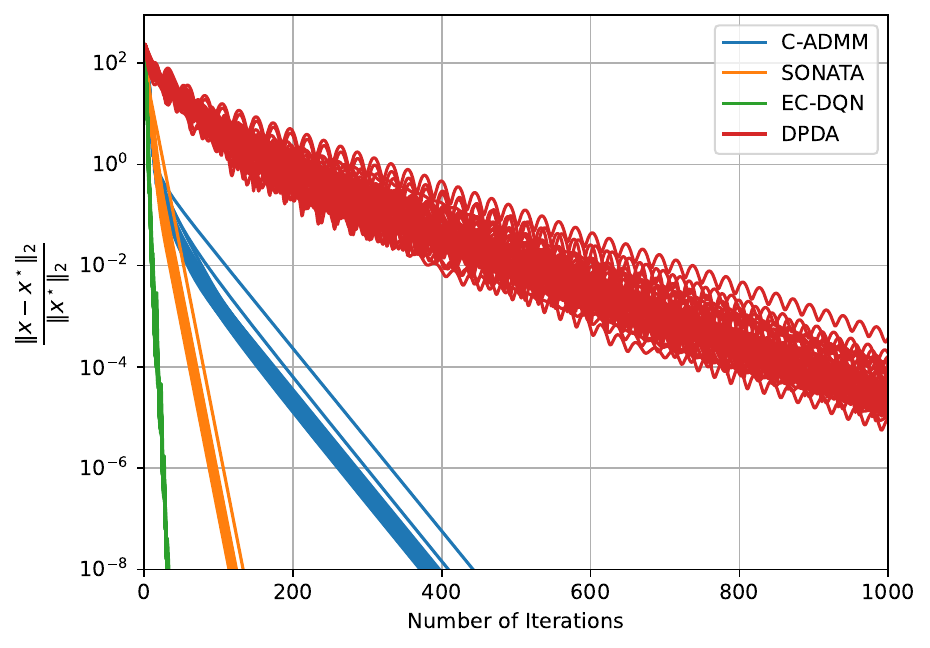}
	\caption{Convergence error of each algorithm on a randomly-generated communication graph with ${\kappa = 0.566}$ in a well-conditioned basis pursuit denoising problem. EC-DQN converges faster than all methods.}
	\label{fig:well_conditioned_basis_run}
\end{figure}
	\appendices
\label{sec:appendix}

\let\oldsubsection = \thesubsection
\renewcommand{\thesubsection}{\Alph{subsection}}

\section{Proof of Lemma 
1
}
\label{appdx:convergence_of_the_mean_sequence}
We prove convergence of the sequence ${\{\mean{\bm{x}}^{(k)}\}_{\forall k \geq 0}}$ to a limit point. 
Some of the ideas used in this proof can be found in \cite{xu2015augmented, shorinwa2023distributed}. From 
(14), (15), (16),
we obtain the following sequences:
\begin{align}
	\mean{\bm{x}}^{(k + 1)} &= \mean{\bm{x}}^{(k)} + \mean{\bm{\alpha}^{(k)} \bm{z}^{(k)}} \label{eq:dqn_mean_x_update} \\
	\mean{\bm{v}}^{(k + 1)} &= \mean{\bm{v}}^{(k)} + \mean{\bm{g}}^{(k + 1)} - \mean{\bm{g}}^{(k)} \label{eq:dqn_mean_v_update} \\
	\mean{\bm{z}}^{(k + 1)} &=\frac{\bm{1}_{N} \bm{1}_{N}^{\T}}{N} \bm{d}^{(k + 1)} \label{eq:dqn_mean_z_update}.
\end{align}

The following lemma bounds the disagreement error between the iterates of each agent and the mean of all the agents' iterates.

\stepcounter{lemma}
\begin{lemma}
	\label{lem:sequence_bounds}
    The sequences $\{\tilde{\bm{x}}^{(k)}\}_{\forall k \geq 0}$, $\{\tilde{\bm{v}}^{(k)}\}_{\forall k \geq 0}$, and $\{\tilde{\bm{z}}^{(k)}\}_{\forall k \geq 0}$, representing the disagreement between the agents on their corresponding local variables satisfy the following bounds:
    \begin{align}
        \norm{\tilde{\bm{x}}^{(k + 1)}}_{2} &\leq \lambda \norm{\tilde{\bm{x}}^{(k)}}_{2}
        + \lambda r_{\alpha} \norm{\mean{\bm{\alpha}^{(k)} \bm{z}^{(k)}}}_{2} \notag \\
        & \quad + \lambda^{2} \alpha_{\max} (1 + r_{\alpha}) q \left(\norm{\tilde{\bm{v}}^{(k)}}_{2} + \norm{\mean{\bm{g}}^{(k)}}_{2}\right), \label{eq:lemma_disagreement_x}  \\
        \norm{\tilde{\bm{v}}^{(k + 1)}}_{2} &\leq \left(\lambda +  \lambda^{3} L \alpha_{\max} (1 + r_{\alpha}) q  \right) \norm{\tilde{\bm{v}}^{(k)}}_{2} \notag \\
        & \quad + \lambda L ( 1 + \lambda) \norm{\tilde{\bm{x}}^{(k)}}_{2} \notag \\
        & \quad + \lambda^{3} L \alpha_{\max} (1 + r_{\alpha}) q \norm{\mean{\bm{g}}^{(k)}}_{2} \notag \\
        & \quad + \lambda L (1 + \lambda r_{\alpha}) \norm{\mean{\bm{\alpha}^{(k)} \bm{z}^{(k)}}}_{2}, \label{eq:lemma_disagreement_v} \\
        \norm{\tilde{\bm{z}}^{(k + 1)}}_{2} &\leq \lambda q \left(\norm{\tilde{\bm{v}}^{(k + 1)}}_{2} + \norm{\mean{\bm{g}}^{(k + 1)}}_{2}\right), \label{eq:lemma_disagreement_z}
    \end{align}
    respectively, given the recurrence in 
    (17), (11), (18),
    where ${q^{2} = \gamma^{2} \min\{n, N\}}$.
\end{lemma}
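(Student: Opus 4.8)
The plan is to push the averaging projection $I_N - P$, with $P = \tfrac{1}{N}\bm{1}_{N}\bm{1}_{N}^{\T}$, through the three DQN recurrences \eqref{eq:dqn_joint_x_update}--\eqref{eq:dqn_joint_z_update} and exploit the contraction $\rho(M) = \lambda < 1$ of $M = W - P$. First I would record the consequences of double stochasticity (Assumption~\ref{assm:mixing_matrix}): $(I_N-P)W = M$, $MP = 0$, $\norm{I_N-P}_2 = 1$, and hence $M\bm{y} = M(I_N-P)\bm{y}$ for every $\bm{y}$, so $\norm{M\bm{y}}_2 \le \lambda\,\norm{(I_N-P)\bm{y}}_2$. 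Applying $(I_N-P)$ to \eqref{eq:dqn_joint_x_update}--\eqref{eq:dqn_joint_z_update} and using $\mean{\bm{g}}^{(k)} = P\bm{g}^{(k)}$ and $\nabla\bm{f} = \bm{g}$ then gives the raw recurrences $\tilde{\bm{x}}^{(k+1)} = M\tilde{\bm{x}}^{(k)} + M(I_N-P)\bm{\alpha}^{(k)}\bm{z}^{(k)}$, $\tilde{\bm{v}}^{(k+1)} = M\tilde{\bm{v}}^{(k)} + M(I_N-P)(\bm{g}^{(k+1)} - \bm{g}^{(k)})$, and $\tilde{\bm{z}}^{(k+1)} = M(I_N-P)\bm{d}^{(k+1)}$.

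The core estimate is a bound on $\norm{(I_N-P)\bm{\alpha}^{(k)}\bm{z}^{(k)}}_2$ in terms of $\norm{\mean{\bm{\alpha}^{(k)} \bm{z}^{(k)}}}_2$ and $\norm{\tilde{\bm{z}}^{(k)}}_2$. Writing $\bar{\alpha}^{(k)} := \norm{\mean{\bm{\alpha}}^{(k)}}_2$ and splitting $\bm{z}^{(k)} = \tilde{\bm{z}}^{(k)} + \mean{\bm{z}}^{(k)}$, the fact that $\mean{\bm{\alpha}}^{(k)} = \bar{\alpha}^{(k)}I_N$ commutes with $P$ yields $(I_N-P)\bm{\alpha}^{(k)}\mean{\bm{z}}^{(k)} = (\bm{\alpha}^{(k)} - \bar{\alpha}^{(k)}I_N)\mean{\bm{z}}^{(k)}$, whose norm is at most $\alpha_{\max}\norm{\mean{\bm{z}}^{(k)}}_2$ since the diagonal matrix $\bm{\alpha}^{(k)} - \bar{\alpha}^{(k)}I_N$ has spectral norm $\le \max_i\alpha_i^{(k)} \le \alpha_{\max}$. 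To introduce $\mean{\bm{\alpha}^{(k)} \bm{z}^{(k)}}$, I use the identity $\mean{\bm{\alpha}^{(k)} \bm{z}^{(k)}} = \bar{\alpha}^{(k)}\mean{\bm{z}}^{(k)} + P\bm{\alpha}^{(k)}\tilde{\bm{z}}^{(k)}$, which gives $\bar{\alpha}^{(k)}\norm{\mean{\bm{z}}^{(k)}}_2 \le \norm{\mean{\bm{\alpha}^{(k)} \bm{z}^{(k)}}}_2 + \alpha_{\max}\norm{\tilde{\bm{z}}^{(k)}}_2$; dividing by $\bar{\alpha}^{(k)}$ and invoking $\alpha_{\max}/\bar{\alpha}^{(k)} \le r_{\alpha}$ produces $\norm{(I_N-P)\bm{\alpha}^{(k)}\bm{z}^{(k)}}_2 \le r_{\alpha}\norm{\mean{\bm{\alpha}^{(k)} \bm{z}^{(k)}}}_2 + \alpha_{\max}(1+r_{\alpha})\norm{\tilde{\bm{z}}^{(k)}}_2$. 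Combining this with $\norm{(W-I_N)\tilde{\bm{x}}^{(k)}}_2 \le (1+\lambda)\norm{\tilde{\bm{x}}^{(k)}}_2$ (from $W - I_N = M - (I_N-P)$ acting on $\tilde{\bm{x}}^{(k)}$) and the decomposition $W\bm{\alpha}^{(k)}\bm{z}^{(k)} = \mean{\bm{\alpha}^{(k)} \bm{z}^{(k)}} + M(I_N-P)\bm{\alpha}^{(k)}\bm{z}^{(k)}$ also gives $\norm{W\bm{\alpha}^{(k)}\bm{z}^{(k)}}_2 \le (1+\lambda r_{\alpha})\norm{\mean{\bm{\alpha}^{(k)} \bm{z}^{(k)}}}_2 + \lambda\alpha_{\max}(1+r_{\alpha})\norm{\tilde{\bm{z}}^{(k)}}_2$.

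For \eqref{eq:lemma_disagreement_z} I would use that $I_N - P$ is an orthogonal projection, so $\norm{\tilde{\bm{z}}^{(k+1)}}_2 \le \lambda\norm{(I_N-P)\bm{d}^{(k+1)}}_2 \le \lambda\norm{\bm{d}^{(k+1)}}_F \le \lambda\gamma\norm{\bm{v}^{(k+1)}}_F$, the last step from the uniform bound $\rho(C_i^{(k+1)}) \le \gamma$. Writing $\bm{v}^{(k+1)} = \tilde{\bm{v}}^{(k+1)} + \mean{\bm{v}}^{(k+1)}$, the mean part is rank one so $\norm{\mean{\bm{v}}^{(k+1)}}_F = \norm{\mean{\bm{g}}^{(k+1)}}_2$ (since $\mean{\bm{v}}^{(k)} = \mean{\bm{g}}^{(k)}$), while $\tilde{\bm{v}}^{(k+1)}$ has rank at most $\min\{n,N\}$ so $\norm{\tilde{\bm{v}}^{(k+1)}}_F \le \sqrt{\min\{n,N\}}\,\norm{\tilde{\bm{v}}^{(k+1)}}_2$; collecting constants gives $q^2 = \gamma^2\min\{n,N\}$ and \eqref{eq:lemma_disagreement_z}, which (because $\bm{z}^{(0)}$ is itself obtained from \eqref{eq:dqn_z_update}) also holds with index $k$ for all $k \ge 0$. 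Then \eqref{eq:lemma_disagreement_x} follows by taking norms in the raw $\tilde{\bm{x}}$ recurrence, inserting the core estimate, and substituting the index-$k$ form of \eqref{eq:lemma_disagreement_z}; and \eqref{eq:lemma_disagreement_v} follows by expanding $\bm{x}^{(k+1)} - \bm{x}^{(k)} = (W-I_N)\tilde{\bm{x}}^{(k)} + W\bm{\alpha}^{(k)}\bm{z}^{(k)}$, using $L$-smoothness to get $\norm{\bm{g}^{(k+1)} - \bm{g}^{(k)}}_2 \le L\norm{\bm{x}^{(k+1)} - \bm{x}^{(k)}}_2$, applying the two bounds from the previous paragraph, and once more substituting \eqref{eq:lemma_disagreement_z} for $\norm{\tilde{\bm{z}}^{(k)}}_2$.

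The main obstacle is the core estimate on $\norm{(I_N-P)\bm{\alpha}^{(k)}\bm{z}^{(k)}}_2$: because the step-sizes are heterogeneous and iteration-dependent, $\bm{\alpha}^{(k)}$ does not commute with the averaging operator, and one must carefully trade the mean component of $\bm{z}^{(k)}$ against $\mean{\bm{\alpha}^{(k)} \bm{z}^{(k)}}$ --- this is precisely where the ratio $r_{\alpha}$ is forced upon us. A secondary difficulty is bounding the quasi-Newton step $\bm{d}^{(k+1)}$ using only the uniform spectral bound $\rho(C_i^{(k)}) \le \gamma$ (no Lipschitz-type control on the $C_i$ is available), while keeping track of the Frobenius-to-spectral-norm conversions that generate the $\sqrt{\min\{n,N\}}$ factor in $q$.
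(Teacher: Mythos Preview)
Your proposal is correct and follows essentially the same route as the paper's proof: derive the raw recurrences $\tilde{\bm{x}}^{(k+1)} = M\tilde{\bm{x}}^{(k)} + M(\bm{\alpha}^{(k)}\bm{z}^{(k)} - \mean{\bm{\alpha}^{(k)}\bm{z}^{(k)}})$, $\tilde{\bm{v}}^{(k+1)} = M\tilde{\bm{v}}^{(k)} + M(\bm{g}^{(k+1)} - \bm{g}^{(k)})$, $\tilde{\bm{z}}^{(k+1)} = M\bm{d}^{(k+1)}$, bound $\norm{\tilde{\bm{z}}}$ via $\norm{\bm{d}}\le q\norm{\bm{v}}$ and $\bm{v}=\tilde{\bm{v}}+\mean{\bm{g}}$, establish the ``core estimate'' on $\norm{(I_N-P)\bm{\alpha}^{(k)}\bm{z}^{(k)}}_2$ that produces the $r_{\alpha}$ and $\alpha_{\max}(1+r_{\alpha})$ constants, and then back-substitute the $\tilde{\bm{z}}$-bound into the $\tilde{\bm{x}}$- and $\tilde{\bm{v}}$-bounds. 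The only cosmetic difference is in the $\tilde{\bm{v}}$ step: the paper bounds $\norm{\bm{x}^{(k+1)}-\bm{x}^{(k)}}_2$ by $\norm{\tilde{\bm{x}}^{(k+1)}}_2+\norm{\tilde{\bm{x}}^{(k)}}_2+\norm{\mean{\bm{\alpha}^{(k)}\bm{z}^{(k)}}}_2$ and then inserts the intermediate $\tilde{\bm{x}}$-bound, whereas you expand $\bm{x}^{(k+1)}-\bm{x}^{(k)}=(W-I_N)\tilde{\bm{x}}^{(k)}+W\bm{\alpha}^{(k)}\bm{z}^{(k)}$ directly; both decompositions yield the identical final coefficients.
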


\begin{proof}
     We begin by expressing the disagreement between each agent's iterate and the mean iterate, given by:
     \begin{equation}
        \label{eq:disagreement_x}
         \begin{aligned}
             \tilde{\bm{x}}^{(k + 1)} &= {\bm{x}}^{(k + 1)} - \mean{\bm{x}}^{(k + 1)} \\
             &= W\left(\bm{x}^{(k)} + \bm{\alpha}^{(k)} \bm{z}^{(k)}\right) - \left(\mean{\bm{x}}^{(k)} + \mean{\bm{\alpha}^{(k)} \bm{z}^{(k)}} \right) \\
             &= M \tilde{\bm{x}}^{(k)} + M \left(\bm{\alpha}^{(k)} \bm{z}^{(k)} - \mean{\bm{\alpha}^{(k)} \bm{z}^{(k)}} \right).
         \end{aligned}
     \end{equation}
     Similarly, we have that:
     \begin{equation}
        \label{eq:disagreement_v}
         \begin{aligned}
             \tilde{\bm{v}}^{(k + 1)} &= {\bm{v}}^{(k + 1)} - \mean{\bm{v}}^{(k + 1)} \\
             &= M \tilde{\bm{v}}^{(k)} + M \left(\nabla \bm{f}\bp{\bm{x}^{(k + 1)}} - \nabla \bm{f}\bp{\bm{x}^{(k)}} \right),
         \end{aligned}
     \end{equation}
     with ${\tilde{\bm{z}}^{(k + 1)} = M \bm{d}^{(k + 1)}}$. Taking the norm of ${\tilde{\bm{z}}^{(k + 1)}}$ yields:
     \begin{equation}
        \label{eq:disagreement_z}
         \begin{aligned}
             \norm{\tilde{\bm{z}}^{(k + 1)}}_{2} &\leq \lambda \norm{\bm{d}^{(k + 1)}}_{2} \\
             &\leq \lambda q \norm{\bm{v}^{(k + 1)}}_{2} \\
             &\leq \lambda q \norm{\tilde{\bm{v}}^{(k + 1)} + \mean{\bm{v}}^{(k + 1)}}_{2} \\
             &\leq \lambda q \left(\norm{\tilde{\bm{v}}^{(k + 1)}}_{2} + \norm{\mean{\bm{g}}^{(k + 1)}}_{2}\right),
         \end{aligned}
     \end{equation}
     where we use the relation: ${\mean{\bm{v}}^{(k + 1)} = \mean{\bm{g}}^{(k + 1)}}$. Likewise, from \eqref{eq:disagreement_x} and \eqref{eq:disagreement_v}, we obtain:
     \begin{equation}
        \label{eq:disagreement_x_mid_step}
         \begin{aligned}
             \norm{\tilde{\bm{x}}^{(k + 1)}}_{2} &\leq \lambda \norm{\tilde{\bm{x}}^{(k)}}_{2} + \lambda \alpha_{\max} (1 + r_{\alpha}) \norm{\tilde{\bm{z}}^{(k)}}_{2} \\
             & \quad + \lambda r_{\alpha} \norm{\mean{\bm{\alpha}^{(k)} \bm{z}^{(k)}}}_{2},
         \end{aligned}
     \end{equation}
     \begin{equation}
        \label{eq:disagreement_v_mid_step}
         \begin{aligned}
             \norm{\tilde{\bm{v}}^{(k + 1)}}_{2} &\leq \lambda \norm{\tilde{\bm{v}}^{(k)}}_{2} + \lambda \norm{\nabla \bm{f}\bp{\bm{x}^{(k + 1)}} - \nabla \bm{f}\bp{\bm{x}^{(k)}}}_{2} \\
             &\leq \lambda L \left(\norm{\tilde{\bm{x}}^{(k + 1)}}_{2} + \norm{\tilde{\bm{x}}^{(k)}}_{2} + \norm{\mean{\bm{\alpha}^{(k)} \bm{z}^{(k)}}}_{2} \right) \\
             & \quad + \lambda \norm{\tilde{\bm{v}}^{(k)}}_{2}.
         \end{aligned}
     \end{equation}
     Applying \eqref{eq:disagreement_z} in \eqref{eq:disagreement_x_mid_step} and \eqref{eq:disagreement_v_mid_step} yields \eqref{eq:lemma_disagreement_x} and \eqref{eq:lemma_disagreement_v}, respectively.

\end{proof}

Upon defining the following sequences:
\begin{align}
	X^{(k)} &= \sqrt{\sum_{l = 0}^{k} \norm{\tilde{\bm{x}}^{(l)}}_{2}^{2}}, \quad
	V^{(k)} = \sqrt{\sum_{l = 0}^{k} \norm{\tilde{\bm{v}}^{(l)}}_{2}^{2}}, \label{eq:sqrt_X_V_definition} \\
	R^{(k)} &= \sqrt{\sum_{l = 0}^{k} \left(\norm{\mean{\bm{\alpha}^{(l)} \bm{z}^{(l)}}}_{2}^{2}
	+ \norm{\mean{\bm{g}}^{(l + 1)}}_{2}^{2} \right)
	}, \label{eq:sqrt_R_definition}
\end{align}
we obtain the following bounds on $X^{(k)}$ and $V^{(k)}$ from Lemma~\ref{lem:sequence_bounds}:
\begin{align}
	X^{(k)} \leq \mu_{xr}R^{(k)} + \mu_{xc}, \label{eq:X_bound} \\
	V^{(k)} \leq \theta_{vr}R^{(k)} + \theta_{vc}, \label{eq:V_bound}
\end{align}
where ${\mu_{xr} = \frac{\rho_{xr} + \rho_{xv} \rho_{vr}}{\eta}}$, ${\mu_{xc} = \frac{\rho_{xv} \epsilon_{v} + \epsilon}{\eta}}$,
${\theta_{vr} = \frac{\rho_{vx}\rho_{xr} + \rho_{vr}}{\eta}}$, ${\theta_{vc} = \frac{\rho_{vx} \epsilon{x} + \epsilon_{v}}{\eta}}$,
${\eta = 1 - \rho_{vx}\rho_{xv}}$,
with
${\rho_{xv} = \frac{\sqrt{2}}{1 - \lambda} \lambda^{2} \alpha_{\max} (1 + r_{\alpha}) q}$,
${\rho_{vx} = \frac{\sqrt{2}}{1 - \lambda} \lambda L (1 + \lambda)}$,
${\rho_{xr} = \frac{\sqrt{2}}{1 - \lambda} \left(\lambda^{2} \alpha_{\max} (1 + r_{\alpha}) q + \lambda r_{\alpha} \right)}$,
${\rho_{vr} = \frac{\sqrt{2}}{1 - \lambda} \left( \lambda^{3} L \alpha_{\max} (1 + r_{\alpha}) q + \lambda L (1 + \lambda r_{\alpha}) \right)}$,
${\epsilon_{x} = \norm{\tilde{\bm{x}}^{(0)}}_{2} \sqrt{\frac{2}{1 - \lambda^{2}}}}$, and
${\epsilon_{v} = \norm{\tilde{\bm{v}}^{(0)}}_{2} \sqrt{\frac{2}{1 - \lambda^{2}}}}$.

Provided that ${\alpha_{\max} < \frac{1 - \lambda}{\lambda^{3} L (1 + r_{\alpha}) q}}$, the bounds in \eqref{eq:X_bound} and \eqref{eq:V_bound} arise from direct application of \cite[cf. Lemma~2]{xu2015augmented}, which applies from non-negativity of the $\norm{\cdot}_{2}$, with ${w(k)}$ in \cite[cf. Lemma~2]{xu2015augmented} defined accordingly from the right-hand-side of \eqref{eq:lemma_disagreement_x} and \eqref{eq:lemma_disagreement_v}.

Further, from $L$-Lipschitz continuity of the gradient of $f_{i}$,~${\forall i \in \mcal{V}}$:
\begin{equation}
	\begin{aligned}
		\bm{f}(\mean{\bm{x}}^{(k + 1)}) &\leq \bm{f}(\mean{\bm{x}}^{(k)}) + \trace{g(\mean{\bm{x}}^{(k)})(\mean{\bm{x}}^{(k + 1)} - \mean{\bm{x}}^{(k)})^{\T}} \\
            & +\frac{L}{2} \norm{\mean{\bm{x}}^{(k + 1)} - \mean{\bm{x}}^{(k)}}_{2}^{2} \\
            &\leq \bm{f}(\mean{\bm{x}}^{(k)}) + \trace{g(\bm{x}^{(k)})\left(\mean{\bm{\alpha}^{(k)} \bm{z}^{(k)}}\right)^{\T}} \\
            & +\frac{L}{2} \norm{\mean{\bm{\alpha}^{(k)} \bm{z}^{(k)}}}_{2}^{2} \\
            &+ \trace{\left(g(\mean{\bm{x}}^{(k)}) - g(\bm{x}^{(k)})\right)\left(\mean{\bm{\alpha}^{(k)} \bm{z}^{(k)}}\right)^{\T}},
	\end{aligned}
\end{equation}
which simplifies to:
\begin{equation}
	\begin{aligned}
		\bm{f}(\mean{\bm{x}}^{(t + 1)}) &\leq \bm{f}(\mean{\bm{x}}^{(0)}) - \frac{1 - 2 \xi}{2} \left(R^{(t)} \right)^{2} \\
            &
		+ L X^{(t)} R^{(t)} 
            + \theta_{\alpha} V^{(t)} R^{(t)},
	\end{aligned}
\end{equation}
upon summing over $k$ from $0$ to $t$, using \eqref{eq:dqn_mean_x_update}, \eqref{eq:dqn_mean_v_update}, \eqref{eq:dqn_mean_z_update}, \eqref{eq:sqrt_X_V_definition} and \eqref{eq:sqrt_R_definition}, where ${\xi = (\frac{L}{2} + N^{2}) \alpha_{\max}^{2}q^{2}}$ and ${\theta_{\alpha} = \beta N^{2} + c (\frac{L}{2} + N^{2})\alpha_{\max}^{2}q^{2}}$, with ${\beta, c > 0}$.
Using \eqref{eq:X_bound} and \eqref{eq:V_bound} for $X^{(t)}$ and $V^{(t)}$, we obtain:
\begin{equation}
	\bm{f}(\mean{\bm{x}}^{(t + 1)}) \leq \bm{f}(\mean{\bm{x}}^{(0)}) + a \left(R^{(t)} \right)^{2}
	+ b R^{(t)},
\end{equation}
where ${a = - \frac{1 - 2 \xi}{2} + L \mu_{xr} + \theta_{\alpha}\theta_{vr}}$ and ${b = L \mu_{xc} + \theta_{\alpha}\theta_{vc}}$.
To bound the error in the objective value with respect to the optimal objective value, we subtract $f^{\star}$ from both sides, which results in:
\begin{equation}
    \label{eq:obj_value_error_step_a}
    \bm{f}(\mean{\bm{x}}^{(t + 1)}) - f^{\star} \leq \bm{f}(\mean{\bm{x}}^{(0)}) - f^{\star} + a \left(R^{(t)} \right)^{2}
	+ b R^{(t)}.
\end{equation}
Since the left-hand side of \eqref {eq:obj_value_error_step_a} is lower-bounded by $0$, we obtain:
\begin{equation}
    \label{eq:obj_value_error_step_b}
    \bm{f}(\mean{\bm{x}}^{(0)}) - f^{\star} + a \left(R^{(t)} \right)^{2}
	+ b R^{(t)} \geq 0,
\end{equation}
which is a quadratic function of $R^{(t)}$.
Provided that ${a < 0}$, the following relation holds from the properties of quadratic functions:
\begin{equation}
	\label{eq:R_bound_lim}
	\lim_{t \rightarrow \infty} R^{(t)} \leq \frac{b + \sqrt{b^{2} - 4 a \left(\bm{f}(\mean{\bm{x}}^{(0)}) - f^{\star}\right)}}{- 2 a} = U < \infty.
\end{equation}
Applying the monotone convergence theorem, we note that ${R^{(t)} \rightarrow U}$ as ${t \rightarrow \infty}$, showing that:
\begin{equation}
	\lim_{k \rightarrow \infty} \left(\norm{\mean{\bm{g}}^{(k)}}_{2}^{2} + \norm{\mean{\bm{\alpha}^{(k)} \bm{z}^{(k)}}}_{2}^{2} \right) = 0,
\end{equation}
which implies that:
\begin{equation}
    \label{eq:mean_g_convergence}
    \lim_{k \rightarrow \infty} \norm{\mean{\bm{g}}^{(k)}}_{2} = \lim_{k \rightarrow \infty}  \norm{\mean{\bm{\alpha}^{(k)} \bm{z}^{(k)}}}_{2} =  0.
\end{equation}
Further, from \eqref{eq:dqn_mean_x_update} and \eqref{eq:mean_g_convergence}:
\begin{equation}
	\lim_{k \rightarrow \infty} \norm{\mean{\bm{x}}^{(k + 1)} - \mean{\bm{x}}^{(k)}}_{2} = 0.
\end{equation}

\section{Proof of Theorem 
1
}
\label{appdx:convergence_to_the_mean}
Taking the limit of both sides in \eqref{eq:X_bound}, we obtain:
\begin{equation}
	\lim_{k \rightarrow \infty} X^{(k)} \leq \lim_{k \rightarrow \infty} \mu_{xr}R^{(k)} + \mu_{xc} 
    \leq \mu_{xr}U + \mu_{xc}
    < \infty,
\end{equation}
which results in the following relation:
\begin{equation}
    \label{eq:x_tilde_convergence_proof}
    \lim_{k \rightarrow \infty} \norm{\tilde{\bm{x}}^{(k)}}_{2} = 0,
\end{equation}
from \eqref{eq:sqrt_X_V_definition}, where we have applied the monotone convergence theorem. Likewise, from \eqref{eq:V_bound}, we have that:
\begin{equation}
    \label{eq:v_tilde_convergence_proof}
    \lim_{k \rightarrow \infty} V^{(k)} \leq \lim_{k \rightarrow \infty} \theta_{vr}R^{(k)} + \theta_{vc} 
    \leq \theta_{vr}U + \theta_{vc}
    < \infty,
\end{equation}
yielding: 
\begin{equation}
    \lim_{k \rightarrow \infty} \norm{\tilde{\bm{v}}^{(k)}}_{2} = 0.
\end{equation}
Lastly, from \eqref{eq:lemma_disagreement_z},
we have that: ${\lim_{k \rightarrow \infty} \norm{\tilde{\bm{z}}^{(k)}}_{2} = 0,}$
using \eqref{eq:mean_g_convergence} and \eqref{eq:v_tilde_convergence_proof}

\section{Proof of Theorem
2
}
\label{appdx:objective_convergence}
Some of ideas used in this proof arise from \cite{xu2015augmented, shorinwa2024distributed}. We provide a brief summary of the proof for completeness.
From \eqref{eq:obj_value_error_step_a}, ${\bm{f}(\mean{\bm{x}}^{(k)})}$ is bounded. As a result, we note that the coerciveness of $\bm{f}$ implies that ${\norm{\mean{\bm{x}}^{(k)}}_{2}}$ is bounded. From the convexity of $\bm{f}$:
\begin{equation}
    \begin{aligned}
        \bm{f}(\mean{\bm{x}}^{(k)}) &\leq f^{\star} + \frac{1}{N} \mathrm{trace}\left(\bm{g}(\mean{\bm{x}}^{(k)})(\mean{\bm{x}}^{(k)} - \bm{x}^{\star})
        \right) \\
        &\leq f^{\star} + \norm{\mean{\bm{g}}(\bm{x}^{(k)})}_{2} \norm{\mean{\bm{x}}^{(k)} - \bm{x}^{\star}}_{2} \\
        & \quad + \frac{L}{2} \norm{\tilde{\bm{x}}^{(k)}}_{2} \norm{\mean{\bm{x}}^{(k)} - \bm{x}^{\star}}_{2}.
    \end{aligned}
\end{equation}
Taking the limit of both sides yields:
\begin{equation}
    \label{eq:convergence_objective_value_mean_x}
    \lim_{k \rightarrow \infty} \bm{f}(\mean{\bm{x}}^{(k)}) = f^{\star},
\end{equation}
since ${\norm{\mean{\bm{x}}^{(k)} - \bm{x}^{\star}}_{2} \leq \norm{\mean{\bm{x}}^{(k)}}_{2} + \norm{\bm{x}^{\star}}_{2}}$ is bounded and $\bm{f}$ is lower-bounded by $f^{\star}$. Further, from Lipschitz-continuity of $\bm{g}$, we note that ${\norm{\bm{g}(\mean{\bm{x}}^{(k)} + \xi \tilde{\bm{x}}^{(k)})}_{2}}$ is bounded, and ${\norm{\mean{\bm{x}}^{(k)} + \xi \tilde{\bm{x}}^{(k)}}_{2}}$ is bounded, since $\norm{\mean{\bm{x}}^{(k)}}_{2}$ and $\norm{\tilde{\bm{x}}^{(k)}}_{2}$ are bounded, with ${0 \leq \xi \leq 1}.$ The mean-value theorem indicates that:
\begin{equation}
    \begin{aligned}
        \left\lvert \bm{f}(\bm{x}^{(k)}) - \bm{f}(\mean{\bm{x}}^{(k)}) \right\rvert &= \frac{1}{N} \left\lvert \mathrm{trace}\left(\bm{g}(\mean{\bm{x}}^{(k)} + \xi \tilde{\bm{x}}^{(k)}) \big(\tilde{\bm{x}}^{(k)} \big)^{\T}  \right) \right\rvert \\
        & \leq \norm{\bm{g}(\mean{\bm{x}}^{(k)} + \xi \tilde{\bm{x}}^{(k)})}_{2} \norm{\tilde{\bm{x}}^{(k)}}_{2}.
    \end{aligned}
\end{equation}
Taking the limit of both sides and applying \eqref{eq:x_tilde_convergence_proof} yields:
\begin{equation}
    \begin{aligned}
        \lim_{k \rightarrow \infty}  \left\lvert \bm{f}(\bm{x}^{(k)}) - \bm{f}(\mean{\bm{x}}^{(k)}) \right\rvert &= 0,
    \end{aligned}
\end{equation}
which results in:
\begin{equation}
    \lim_{k \rightarrow \infty} \bm{f}(\bm{x}^{(k)}) = \lim_{k \rightarrow \infty} \bm{f}(\mean{\bm{x}}^{(k)}) = f^{\star},
\end{equation}
from \eqref{eq:convergence_objective_value_mean_x}.

\begin{figure}[th]
	\centering
	\includegraphics[width=\linewidth]{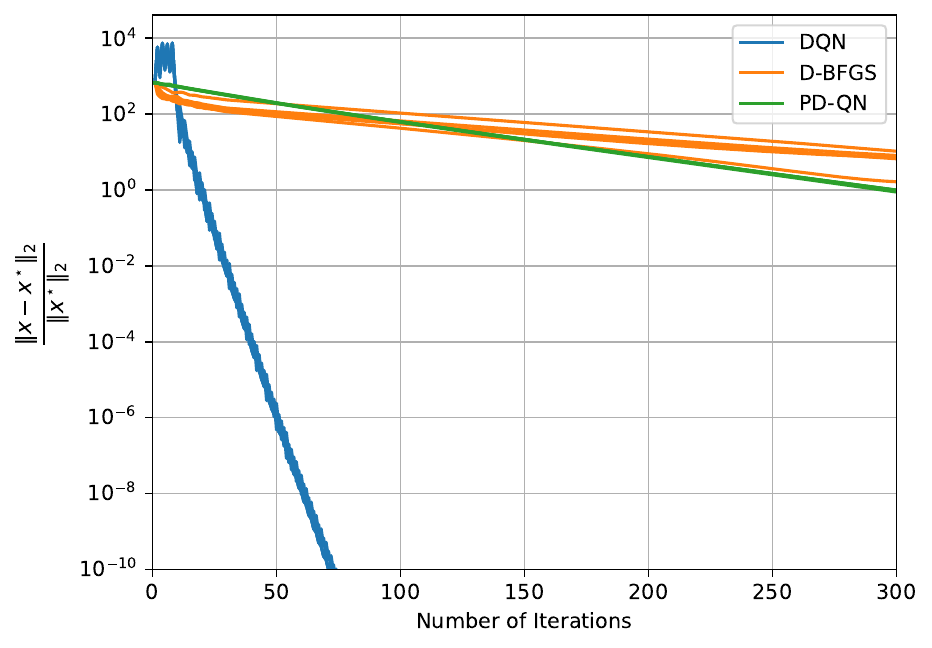}
	\caption{Convergence error of D-BFGS, PD-QN, and DQN in QP problem with ${\kappa = 0.90}$. DQN achieves the fastest convergence rate compared to the other algorithms.}
	\label{fig:d_bfgs_pdqn_stats}
\end{figure}

\begin{table}[t]
	\centering
	\caption{The mean and standard deviation of the cumulative computation time (Comp. Time), in seconds (secs.), and the cumulative size of messages exchanged per agent (Comm. Cost), in Megabytes (MB), in $20$ well-conditioned quadratic programs, across a range of communication networks.}
	\label{tab:well_conditioned_QP_d_bfgs_pdqn}
	\begin{adjustbox}{width=\linewidth}
		{\begin{tabular}{l c c c}
				\toprule
				\multirow{2}{*}{\textbf{Algorithm}} & \multicolumn{3}{c}{$\bm{\kappa} = \bm{0.70}$} \\
				\cmidrule(lr){2-4} 
				 & Comp. Time (secs.) & Comm. Cost (MB) & Success (\%) \\
				\midrule
				D-BFGS \cite{eisen2017decentralized} & $1.64\mathrm{e}^{-1}  \pm 3.17\mathrm{e}^{-2}$ & $1.54\mathrm{e}^{-1}$ & $0$
				 \\
				PD-QN \cite{eisen2019primal} & $2.48\mathrm{e}^{-1}  \pm 5.26\mathrm{e}^{-2}$ & $2.18\mathrm{e}^{-1}$ & $0$
				 \\
				DQN (ours) & $\bm{1.63\mathrm{e}^{-2}  \pm 4.46\mathrm{e}^{-3}}$ & $\bm{1.24\mathrm{e}^{-2} \pm 3.28\mathrm{e}^{-3}}$ & $\bm{100}$
				 \\
				\bottomrule
		\end{tabular}}
	\end{adjustbox}
  
	\bigskip
 
	\begin{adjustbox}{width=\linewidth}
		{\begin{tabular}{l c c c}
				\toprule
				\multirow{2}{*}{\textbf{Algorithm}} & \multicolumn{3}{c}{$\bm{\kappa} = \bm{0.90}$} \\
				\cmidrule(lr){2-4} 
				 & Comp. Time (secs.) & Comm. Cost (MB) & Success (\%) \\
				\midrule
				D-BFGS \cite{eisen2017decentralized} & $1.76\mathrm{e}^{-1}  \pm 3.73\mathrm{e}^{-2}$ & $1.80\mathrm{e}^{-1}$ & $0$
				 \\
				PD-QN \cite{eisen2019primal} & $2.68\mathrm{e}^{-1}  \pm 6.50\mathrm{e}^{-2}$ & $2.44\mathrm{e}^{-1}$ & $0$
				 \\
				DQN (ours) & $\bm{8.68\mathrm{e}^{-3}  \pm 2.11\mathrm{e}^{-3}}$ & $\bm{6.51\mathrm{e}^{-3} \pm 6.71\mathrm{e}^{-4}}$ & $\bm{100}$
				 \\
				\bottomrule
		\end{tabular}}
	\end{adjustbox}
\end{table}

\section{Comparison to D-BFGS \texorpdfstring{\cite{eisen2017decentralized}}{[10]} and PD-QN \texorpdfstring{\cite{eisen2019primal}}{[11]}}
\label{appdx:evaluation_D_BFGS_PDQN}
We evaluate the performance of DQN compared to the existing distributed quasi-Newton methods D-BFGS \cite{eisen2017decentralized} and PD-QN \cite{eisen2019primal} in well-conditioned quadratic programming (QP) problems (described in Section~\ref{sec:sim_QP}), with ${N = 5}$ and ${n = 4}$. We assess the computation time and communication cost required by each method for convergence across $20$ QP problems in two communication graphs with ${\kappa = 0.70}$ and ${\kappa = 0.90}$. We maintain the same setting for the simulations, with the convergence threshold set at $1\mathrm{e}^{-10}.$ We summarize the performance of each algorithm in Table~\ref{tab:well_conditioned_QP_d_bfgs_pdqn}. In all instances, D-BFGS and PD-QN failed to converge within $1000$ iterations. As a result, the cumulative communication cost of these methods remained constant across all problem instances on a fixed communication graph. In contrast, DQN achieves a perfect success rate, converging in every instance of the QP problem. Moreover, DQN achieves the fastest computation time with the minimum communication cost, providing at least an order of magnitude speedup compared to D-BFGS and PD-QN. 

In Figure~\ref{fig:d_bfgs_pdqn_stats}, we show the convergence error per iteration of each method on an instance of the QP problem with ${\kappa = 0.9}$. DQN converges within $75$ iterations, in contrast to D-BFGS and PD-QN, which require a significantly greater number of iterations for convergence.

\let\thesubsection = \oldsubsection
        \end{appendices}

\end{document}